\numberwithin{equation}{section}
\numberwithin{figure}{section}
\numberwithin{table}{section}
\long\def\MSC#1\EndMSC{\def\arg{#1}\ifx\arg\empty\relax\else
	{\narrower\noindent%
		{2020 Mathematics Subject Classification}: #1\\} \fi}
\long\def\PACS#1\EndPACS{\def\arg{#1}\ifx\arg\empty\relax\else
	{\narrower\noindent%
		{PACS numbers}: #1}\fi}
\long\def\KEY#1\EndKEY{\def\arg{#1}\ifx\arg\empty\relax\else
	{\narrower\noindent%
		Keywords: #1\\}\fi}
\newcommand{\norm}[1]{\lVert#1\rVert}
\newcommand{\frob}[1]{\norm{#1}_{\textup{F}}}
\newcommand{\abs}[1]{\lvert#1\rvert} 
\newcommand{\inner}[1]{\langle#1\rangle}
\newcommand{\suppm}{\mathop{\textup{supp}}}
\newcommand{\e}{\mathrm{e}}    
\newcommand{\di}{\mathrm{d}}   
\newcommand{\R}{\mathbb{R}}
\newcommand{\N}{\mathbb{N}}
\renewcommand{\H}{\mathcal{H}}
\newcommand{\GammaN}{\Gamma_{\textup{N}}}
\newcommand{\GammaD}{\Gamma_{\textup{D}}}
\newcommand{\DLambda}{\textup{D}\mkern-1.5mu \Lambda}
\newcommand{\Dp}{D_{\textup{F}}^{+}}
\newcommand{\Dm}{D_{\textup{F}}^{-}}
\theoremstyle{plain}
\newtheorem{theorem}{Theorem}[section]
\newtheorem{lemma}[theorem]{Lemma}
\newtheorem{proposition}[theorem]{Proposition}
\theoremstyle{definition}
\newtheorem{definition}[theorem]{Definition}
\newtheorem{assumption}[theorem]{Assumption}
\theoremstyle{remark}
\newtheorem{remark}[theorem]{Remark}
\DeclareMathOperator{\Tr}{tr}
\begin{document}
\title{Direct reconstruction of general elastic inclusions}

\author[S.~Eberle-Blick]{Sarah Eberle-Blick}
\address[S.~Eberle-Blick]{Mathematical Institute for Machine Learning and Data Science, Catholic University of Eichst{\"a}tt-Ingolstadt, Hohe-Schul-Str.~5, 85049 Ingolstadt, Germany.}
\email{Sarah.Eberle-Blick@ku.de}

\author[H.~Garde]{Henrik Garde}
\address[H.~Garde]{Department of Mathematics, Aarhus University, Ny Munkegade~118, 8000 Aarhus C, Denmark.}
\email{garde@math.au.dk}

\author[N.~Hyv\"onen]{Nuutti Hyv\"onen}
\address[N.~Hyv\"onen]{Department of Mathematics and Systems Analysis, Aalto University, P.O. Box~11100, 00076 Helsinki, Finland.}
\email{nuutti.hyvonen@aalto.fi}

\begin{abstract}
	The inverse problem of linear elasticity is to determine the Lam\'e parameters, which characterize the mechanical properties of a domain, from pairs of pressure activations and the resulting displacements on its boundary. This work considers the specific problem of reconstructing inclusions that manifest themselves as deviations from the background Lam\'e parameters. 
	
	The monotonicity method is a direct reconstruction method that has previously been considered for domains only containing positive (or negative) inclusions with finite contrast. That is, all inclusions have previously been assumed to correspond to a finite increase (or decrease) in both Lam\'e parameters compared to their background values. We prove the general outer approach of the monotonicity method that simultaneously allows positive and negative inclusions, of both finite and extreme contrast; the latter refers to either infinitely stiff or perfectly elastic materials.
\end{abstract}

\maketitle

\KEY
linear elasticity,  monotonicity method, perfectly elastic, infinitely stiff.
\EndKEY

\MSC
35R30, 35R05, 47H05.
\EndMSC

\tableofcontents

\section{Introduction}

This work considers the inverse boundary value problem of linear elasticity that can be formulated as the task of determining the Lam\'e parameters, which characterize the mechanical properties of the investigated body, from pairs of external pressure activations and the associated boundary displacements. Previous theoretical works on this inverse problem include~\cite{Barbone04, Beretta14a, Beretta14b, Carstea18, Eskin02, Ikehata90, Ikehata06, Ikehata99, Imanuvilov11,Lin17,Nakamura93,Nakamura94,Nakamura94erratum,Nakamura95}.  

Our interest lies more specifically with the reconstruction of inclusions,~i.e.,~the task of determining from boundary measurements the regions where the Lam\'e parameters differ from their background values. The \emph{monotonicity method}~\cite{Harrach2008,Harrach2013,Tamburrino02} has previously been successfully applied to inclusion detection in the framework of linear elasticity in \cite{Eberle2021,Eberle21b} (see also \cite{Eberle22,Eberle23,Eberle2021a} for related results), but these papers only consider the so-called \emph{inner approach} that determines if a given open set is fully contained in an inclusion. The inner approach assumes that all inclusions are either (i) \emph{positive} or (ii) \emph{negative}, meaning that both Lam\'e parameters are everywhere in the domain either (i) larger or equal or (ii) smaller or equal to their respective background values. Moreover, the cases of \emph{perfectly elastic} and \emph{infinitely stiff} inclusions, which correspond, respectively, to vanishing and infinite Lam\'e parameters, have previously not been considered in connection to the monotonicity method. Such {\em extreme inclusions} can alternatively be characterized by vanishing or infinite Young's modulus, assuming Poisson's ratio is bounded strictly away from $0$ and $\tfrac{1}{2}$ everywhere in the domain; cf.,~e.g.,~\cite{Gould2018}.

Following the analysis on electrical impedance tomography in \cite{Garde2020,Garde2025,Garde2022}, we demonstrate that the previous results on the monotonicity method for linear elasticity can be generalized to the case of extreme inclusions. What is more, we prove that the so-called \emph{outer approach} of the monotonicity method provides simultaneous characterization of positive and negative inclusions, which are allowed to have both finite and extreme contrast; it determines whether a given subset of the domain contains all inclusions. This is given in our main result Theorem~\ref{thm:monoext}. It should be noted, however, that positive (negative) perturbations are located in the same parts of the domain for both Lam\'e parameters, i.e., regions where the Lam\'e parameters deviate to opposite directions from their respective background values are not considered. On the other hand, our result does not require any prior bounds on the contrast of the inclusions compared to the background Lam\'e parameters.

The new results are enabled by proving new \emph{simultaneous} localization results (localized potentials for several pairs of Lam\'e parameters), new monotonicity inequalities that account for extreme inclusions, and operator convergence of Neumann-to-Dirichlet maps for sequences of finite pairs of Lam\'e parameters converging to an extreme pair of Lam\'e parameters.

Section~\ref{sec:forward} introduces the forward problem of linear elasticity with extreme inclusions, and Section~\ref{sec:limit} formulates how it can be reached as a limit of standard forward problems by letting the Lam\'e parameters converge to zero and infinity, respectively, inside the perfectly elastic and infinitely stiff inclusions (cf.~\cite{Garde2020}). Our main result (Theorem~\ref{thm:monoext}) on the outer approach, of reconstructing inhomogeneities composed of both positive and negative inclusions with extreme and non-extreme components, and specialized results for the inner approach (Theorems~\ref{thm:mononeg} and~\ref{thm:mononeg}), of reconstructing definite extreme inclusions, are presented in Section~\ref{sec:mono}. Section~\ref{sec:linear_outer} formulates the linearized outer approach for reconstructing non-extreme inclusions (Theorem~\ref{thm:monofinitelin}), again allowing both positive and negative inclusions, which is a result missing from previous literature on the monotonicity method in the framework of linear elasticity. Sections~\ref{sec:projection} and~\ref{sec:monoineq}--\ref{sec:localized} introduce auxiliary tools and prove lemmas needed for proving the main results in Sections~\ref{sec:convproof} and~\ref{sec:monoextproof}--\ref{sec:monofinitelinptoof}.

\subsection{Remarks on notation} \label{sec:notation}

Due to the application of elasticity, we assume that all involved function spaces are real. 

We denote by $A:B$ the Frobenius inner product of two matrices and by $\frob{ \, \cdot \,}$ the Frobenius norm. We use $\inner{\,\cdot\,,\,\cdot\,}$ for the standard inner product on $L^2(\GammaN)^d$ (where $\GammaN$ is defined in Section~\ref{sec:forward}), with $\norm{\,\cdot\,}$ as the associated norm. 

For a vector-valued function $u$, $\widehat{\nabla}u=\frac{1}{2}\bigl(\nabla u+(\nabla u)^{\textup{T}}\bigr)$ is the symmetric gradient. The divergence of a $d\times d$ matrix-valued function $A$ is
\begin{equation*}
	\nabla \cdot A = \sum_{i,j=1}^d \frac{\partial A_{ij}}{\partial x_j}\e_i,
\end{equation*}
where $\{\e_i\}_{i=1}^d$ is the standard basis for $\R^d$.

We denote by $C^\circ$ the interior of a set $C$. Moreover, $K>0$ denotes a generic positive constant that may change from line to line in the mathematical contents. At all occurrences of using supremum/infimum, we mean the \emph{essential} supremum/infimum.

For self-adjoint operators $A,B\in\mathscr{L}(H)$ on a Hilbert space $H$, we write $A\geq B$ when $A-B$ is positive semidefinite; this is the Loewner ordering of such operators.

\section{Forward problem with extreme inclusions} \label{sec:forward}

We consider a bounded Lipschitz domain, with connected complement, $\Omega$ in $\R^d$ for $d\in\N\setminus\{1\}$, occupied by an isotropic material with linear stress-strain relation. The non-empty open boundary pieces $\GammaD, \GammaN \subset \partial \Omega$ are the corresponding Dirichlet and Neumann boundaries. The choice of mixed boundary conditions is based on the physical treatment of the elasticity problem. The Neumann-to-Dirichlet operator with a fixed Dirichlet part is an idealized model for fixing an elastic object in place on one part of the boundary, applying different pressure patterns to the remaining part, and measuring the resulting displacements.

Let $C_0$ be the part of the domain that is perfectly elastic and $C_\infty$ the part of the domain that is infinitely stiff. We assume the extreme inclusions satisfy the following conditions.
\begin{assumption} \label{assump}
	Let $C = C_0\cup C_\infty$ where $C_0,C_\infty \Subset \Omega$ satisfy:
	\begin{enumerate}[(i)]
		\item $C_0$ and $C_\infty$ are closures of open sets with Lipschitz boundaries.
		\item $C_0 \cap C_\infty = \emptyset$.
		\item $\Omega\setminus C_0$ is connected.
	\end{enumerate}
\end{assumption}
\noindent Note that $C_0$ and $C_\infty$ can consist of several connected components and are also allowed to be empty.
	
If 
\begin{equation*}
	\lambda_0, \mu_0 \in L_+^\infty(\Omega) = \{\, w\in L^\infty(\Omega) : \inf(w)>0 \,\}
\end{equation*}
denote typical non-extreme Lam\'e parameters, we can formally allow extreme Lam\'e parameters $\lambda$ and $\mu$ by writing 
\begin{equation} \label{eq:extremeLame}
    \lambda=\begin{cases}
    \lambda_0 & \text{in } \Omega\setminus C\\
    0 & \text{in } C_0\\
    \infty & \text{in } C_\infty
    \end{cases}
    \quad \text{and}
    \quad
    \mu=\begin{cases}
    \mu_0 & \text{in } \Omega\setminus C\\
    0 & \text{in } C_0\\
    \infty & \text{in } C_\infty.
    \end{cases}
\end{equation}
The displacement vector $u \colon \Omega\setminus C_0 \rightarrow\R^d$ satisfies the following elliptic boundary value problem:
\begin{equation} \label{eq:problem}
	\begin{split}
		\nabla\cdot \bigl(\lambda(\nabla\cdot u)I+2\mu\widehat{\nabla}u\bigr) &=0 \text{ in } \Omega\setminus C, \\
		u &= 0 \text{ on } \GammaD, \\
		\bigl(\lambda(\nabla\cdot u)I+2\mu\widehat{\nabla}u\bigr)n &=
		\begin{cases}
			g & \text{on } \GammaN, \\
			0 & \text{on } \partial C_0, 
		\end{cases} \\
		\widehat{\nabla}u &= 0 \text{ in } C_\infty^\circ, \\
		\int_{\partial C_i} \bigl(\lambda(\nabla\cdot u)I+2\mu\widehat{\nabla}u\bigr)n\,\di S &= 0 \text{ for each component $C_i$ of $C_\infty$.}
	\end{split}
\end{equation}
Here $\partial\Omega = \overline{\GammaD}\cup\overline{\GammaN}$ and $\GammaD\cap\GammaN = \emptyset$. Moreover, $n$ is the outer unit normal vector to $\Omega\setminus C$, $g\in L^2(\GammaN)^d$ the boundary load (pressure field), and $I$ is the $d\times d$ identity matrix. As $C_0$ is perfectly elastic, the pressure field vanishes on its boundary, and as $C_\infty$ is rigid, the displacement field in its interior is constant. Furthermore, the integral of the pressure field over the boundary of each component of $C_\infty$ must vanish at an equilibrium because otherwise these rigid bodies would be moving.

Note that $u$ is only defined in $\Omega\setminus C_0$. There are many different ways to extend $u$ into $C_0$. In this paper, when it is convenient, we may use the following $H^1$-extension $Eu$ instead of $u$.
\begin{definition} \label{def:extension}
	Let $E \colon H^1(\Omega\setminus C_0)^d \to H^1(\Omega)^d$ be defined as
	\begin{equation*}
		Ew = \begin{cases}
			w & \textup{in } \Omega\setminus C_0, \\
			\widetilde{w} & \textup{in } C_0,
		\end{cases}
	\end{equation*}
	where $\widetilde{w}$ is the unique solution in $H^1(C_0^\circ)^d$ to the Dirichlet problem
	\begin{align*} 
		\nabla\cdot \bigl(\lambda_0(\nabla\cdot \widetilde{w})I+2\mu_0\widehat{\nabla}\widetilde{w}\bigr) &= 0 \text{ in } C_0^\circ,  \\
		\widetilde{w} &= w \text{ on } \partial C_0.
	\end{align*}
\end{definition}
In the rest of the paper, we will not explicitly specify which background Lam\'e parameters and which set $C_0$ that is used for the extension operator $E$. It should be implicitly understood when we write $Eu$, that the background Lam\'e parameters and the corresponding $C_0$ set used in the definition of $u$ are also used for the extension $Eu$.

For the sake of comparison, in case $C_0=C_\infty=\emptyset$, we also write down the standard non-extreme PDE problem:
\begin{align*} 
	\nabla\cdot \bigl(\lambda(\nabla\cdot u)I+2\mu\widehat{\nabla}u\bigr) &=0 \text{ in } \Omega, \\
	u &= 0 \text{ on } \GammaD, \\
	\bigl(\lambda(\nabla\cdot u)I+2\mu\widehat{\nabla}u\bigr)n &= g \text{ on } \GammaN.
\end{align*}

\subsection{Variational formulation}

Consider the Hilbert space
\begin{align*}
    \H_{C_0}^{C_\infty} = \bigl\{\, v\in H^1(\Omega\setminus C_0)^d : \widehat{\nabla}v=0 \text{ in } C_\infty^\circ \text{ and } v|_{\GammaD} = 0 \,\bigr\}.
\end{align*}
We may equip $\H_{C_0}^{C_\infty}$ with either of the inner products 
\begin{align}
   \inner{w,v}_{\lambda,\mu} &= \int_{\Omega\setminus C_0} \lambda_0(\nabla\cdot w)(\nabla\cdot v) + 2\mu_0\widehat{\nabla}w:\widehat{\nabla}v \, \di x, \label{eq:innerweak} \\
   \inner{\widehat{\nabla}w,\widehat{\nabla}v}_{L^2(\Omega\setminus C_0)^{d\times d}} &= \int_{\Omega\setminus C_0}\widehat{\nabla}w:\widehat{\nabla}v \, \di x. \label{eq:innerstar}
\end{align}
Due to a Poincar\'e inequality, based on the vanishing trace on $\GammaD$, and by Korn's inequality, we have (c.f.~\cite[Theorem~2.3]{Cialet2010})
\begin{equation}
	\norm{\nabla v}_{L^2(\Omega\setminus C_0)^{d\times d}} \leq K\norm{\widehat{\nabla} v}_{L^2(\Omega\setminus C_0)^{d\times d}}, \quad v\in\H_{C_0}^{C_\infty}.
\end{equation}
Hence, again by the Poincar\'e inequality, and as 
\begin{equation} \label{eq:nablabnd}
	\abs{\nabla\cdot v} = \abs{\Tr(\widehat{\nabla}v)} \leq \sqrt{d}\frob{\widehat{\nabla}v}, 
\end{equation} 
the norms $\norm{\,\cdot\,}_{\lambda,\mu}$ and $\norm{\widehat{\nabla}\,\cdot\,}_{L^2(\Omega\setminus C_0)^{d\times d}}$ are equivalent to the usual $H^1(\Omega\setminus C_0)^d$-norm on $\H_{C_0}^{C_\infty}$. 
\begin{remark}
  As a slight abuse of notation,
  we occasionally denote by $\inner{\,\cdot\,,\,\cdot\,}_{\lambda,\mu}$ the inner product on  $\H_{C_0}^{\emptyset}$ defined by the same expression \eqref{eq:innerweak}; see, e.g.\ Section~\ref{sec:projection}.
\end{remark}
The weak problem formulation of \eqref{eq:problem} is
\begin{equation} \label{eq:weakform}
	\inner{u,v}_{\lambda,\mu} = \inner{g,v} \quad \text{for all } v \in\H_{C_0}^{C_\infty}.
\end{equation}
As mentioned in Section~\ref{sec:notation}, $\inner{\,\cdot\,,\,\cdot\,}$ is the standard inner product on $L^2(\GammaN)^d$, with $\norm{\,\cdot\,}$ denoting the associated norm. The Lax--Milgram lemma guarantees the existence of a unique solution $u = u_g^{\lambda,\mu}\in \H_{C_0}^{C_\infty}$ to the weak problem \eqref{eq:weakform}; we will often use this notation to make it clear which Neumann data and which Lam\'e parameters are in use. This also gives the continuous dependence on the Neumann data:
\begin{equation*}
	\norm{u_g^{\lambda,\mu}}_{H^1(\Omega\setminus C_0)^d} \leq K\norm{g}.
\end{equation*}

We define the Neumann-to-Dirichlet (ND) map $\Lambda(\lambda,\mu)$ by
\begin{equation*}
	\Lambda(\lambda,\mu)\colon L^2(\GammaN)^d\rightarrow L^2(\GammaN)^d, \quad g\mapsto u_g^{\lambda,\mu}|_{\GammaN}, 
\end{equation*}
which is compact and self-adjoint. From the weak form, we get
\begin{equation}
	\inner{\Lambda(\lambda_1,\mu_1)g,g} = \inner{u_g^{\lambda_2,\mu_2},Eu_g^{\lambda_1,\mu_1}}_{\lambda_2,\mu_2}
\end{equation}
for any pairs $(\lambda_1,\mu_1)$ and $(\lambda_2,\mu_2)$ for which the $C_\infty$ inclusions of the extreme Lam\'e parameter pair  $(\lambda_2,\mu_2)$ are contained in those of $(\lambda_1,\mu_1)$. In particular, if $(\lambda_1,\mu_1) = (\lambda_2,\mu_2)$, this becomes
\begin{equation}
	\inner{\Lambda(\lambda_1,\mu_1)g,g} = \norm{u_g^{\lambda_1,\mu_1}}_{\lambda_1,\mu_1}^2.
\end{equation}

\section{Extreme measurements as a limit} \label{sec:limit}

In this section, we retrieve the displacement vector field and the ND map in the case of perfectly elastic and infinitely stiff inclusions via a limit of problems that correspond to a sequence of truncated Lam\'e parameters that belong to $L^\infty_+(\Omega)$. This also ensures that the PDE formulation \eqref{eq:problem} and its weak form \eqref{eq:weakform} indeed are the correct interpretations of how to model such extreme inclusions.

\begin{theorem}\label{thm:convergence}
Let $C=C_0\cup C_\infty$ satisfy Assumption \ref{assump}, let $\lambda$ and $\mu$ be as in \eqref{eq:extremeLame}, and define the $\epsilon$-truncated versions of $\lambda$ and $\mu$, with $\epsilon>0$, by
\begin{equation*}
    \lambda_\epsilon=\begin{cases}
    \lambda_0 & \text{in } \Omega\setminus C\\
    \epsilon\lambda_0 & \text{in } C_0\\
    \epsilon^{-1}\lambda_0 & \text{in } C_\infty
    \end{cases}
    \quad \text{and} \quad
    \mu_\epsilon=\begin{cases}
    \mu_0 & \text{in } \Omega\setminus C\\
    \epsilon\mu_0 & \text{in } C_0\\
    \epsilon^{-1}\mu_0 & \text{in } C_\infty.
    \end{cases}
\end{equation*}
Then
\begin{equation*} 
	\norm{ Eu_g^{\lambda,\mu} - u_g^{\lambda_\epsilon,\mu_\epsilon}}_{H^1(\Omega)^d} \leq K \epsilon^{1/2} \norm{g}. 
\end{equation*}
As a direct consequence,
\begin{equation*} 
	\norm{\Lambda(\lambda,\mu) - \Lambda(\lambda_\epsilon,\mu_\epsilon)}_{\mathscr{L}(L^2(\GammaN)^d)} \leq K\epsilon^{1/2}. 
\end{equation*}
\end{theorem}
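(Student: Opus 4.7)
The plan is to obtain an $O(\epsilon)$ energy estimate on $w := Eu_g^{\lambda,\mu} - u_g^{\lambda_\epsilon,\mu_\epsilon} \in \H := \{v \in H^1(\Omega)^d : v|_{\GammaD} = 0\}$ and then upgrade it to an $H^1(\Omega)^d$ bound. Writing $a_\epsilon(\cdot,\cdot)$ for the bilinear form on $\H$ associated with the truncated Lam\'e parameters $(\lambda_\epsilon,\mu_\epsilon)$, the weak formulation of $u_g^{\lambda_\epsilon,\mu_\epsilon}$ immediately gives $a_\epsilon(w,w) = a_\epsilon(Eu_g^{\lambda,\mu}, w) - \inner{g, w}$. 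Since $\widehat{\nabla}Eu_g^{\lambda,\mu} = 0$ on $C_\infty^\circ$ (whence $\nabla\cdot Eu_g^{\lambda,\mu} = 0$ there as well), the $C_\infty$ contribution to $a_\epsilon(Eu_g^{\lambda,\mu},w)$ drops out. Splitting the remaining integrals over $\Omega\setminus C$ and $C_0$ and integrating by parts using the background Lam\'e equations satisfied by $u_g^{\lambda,\mu}$ on $\Omega\setminus C$ and by $\widetilde{u} := Eu_g^{\lambda,\mu}|_{C_0^\circ}$ on $C_0^\circ$, I would reduce $a_\epsilon(w,w)$ to a sum of two surface integrals; the Neumann data $g$ on $\GammaN$, the vanishing of $w$ on $\GammaD$, and the zero-traction condition on $\partial C_0$ from outside cancel all remaining boundary terms, leaving
\begin{equation*}
a_\epsilon(w,w) = \int_{\partial C_\infty}\bigl(\lambda_0(\nabla\cdot u_g^{\lambda,\mu})I + 2\mu_0\widehat{\nabla}u_g^{\lambda,\mu}\bigr)n\cdot w\,\di S + \epsilon\int_{\partial C_0}\bigl(\lambda_0(\nabla\cdot\widetilde{u})I + 2\mu_0\widehat{\nabla}\widetilde{u}\bigr)n\cdot w\,\di S.
\end{equation*}

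The $\partial C_0$ integral is routine: elliptic regularity bounds the $H^{-1/2}(\partial C_0)^d$-norm of the stress trace of $\widetilde{u}$ by $K\|g\|$, so $H^{-1/2}$--$H^{1/2}$ duality gives a bound of the form $\epsilon K\|g\|\|w\|_{H^1(\Omega\setminus C_0)^d}$. \textbf{The main obstacle} is the $\partial C_\infty$ integral, which carries no manifest factor of $\epsilon$. My plan, mirroring the scalar EIT argument in \cite{Garde2020}, is to exploit that the weak formulation of the extreme problem tested against any rigid-body motion $r$ on a single component $C_i$ of $C_\infty$ forces $\int_{\partial C_i}\bigl(\lambda_0(\nabla\cdot u_g^{\lambda,\mu})I + 2\mu_0\widehat{\nabla}u_g^{\lambda,\mu}\bigr)n\cdot r\,\di S = 0$, i.e., force \emph{and} torque balance on each rigid component. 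Subtracting from $w$ componentwise the rigid-motion best approximation $r_i$ supplied by Korn's second inequality, and using an $H^{-1/2}$--$H^{1/2}$ duality pairing in which the $H^{-1/2}(\partial C_i)^d$-norm of the stress trace of $u_g^{\lambda,\mu}$ is controlled by $K\|g\|$ via the standard $H(\mathrm{div})$-trace theorem on $\Omega\setminus\overline{C}$, yields
\begin{equation*}
\Bigl|\int_{\partial C_\infty}\bigl(\lambda_0(\nabla\cdot u_g^{\lambda,\mu})I + 2\mu_0\widehat{\nabla}u_g^{\lambda,\mu}\bigr)n\cdot w\,\di S\Bigr| \leq K\|g\|\,\|\widehat{\nabla}w\|_{L^2(C_\infty)^{d\times d}}.
\end{equation*}
The crucial $\epsilon$-gain then arises from the $\epsilon^{-1}$ weight on $C_\infty$ built into $a_\epsilon$: the inequality $a_\epsilon(w,w) \geq 2\epsilon^{-1}\int_{C_\infty}\mu_0|\widehat{\nabla}w|^2\,\di x$ immediately gives $\|\widehat{\nabla}w\|_{L^2(C_\infty)^{d\times d}}^2 \leq K\epsilon\,a_\epsilon(w,w)$.

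Combining the two residual estimates yields $a_\epsilon(w,w) \leq K\epsilon^{1/2}\|g\|\,a_\epsilon(w,w)^{1/2}$, hence $a_\epsilon(w,w) \leq K\epsilon\|g\|^2$. Since $\mu_\epsilon \geq \mu_0$ on $\Omega\setminus C_0$ for $\epsilon\leq 1$, Korn's and Poincar\'e's inequalities on $\Omega\setminus C_0$ then give $\|w\|_{H^1(\Omega\setminus C_0)^d}^2 \leq K a_\epsilon(w,w) \leq K\epsilon\|g\|^2$. Finally, $w|_{C_0^\circ}$ itself satisfies the homogeneous background Lam\'e system with Dirichlet trace $w|_{\partial C_0}$---both $Eu_g^{\lambda,\mu}$ and $u_g^{\lambda_\epsilon,\mu_\epsilon}$ do in $C_0^\circ$, the latter after dividing by $\epsilon$---so the standard elliptic energy estimate combined with the trace inequality gives $\|w\|_{H^1(C_0^\circ)^d} \leq K\|w\|_{H^1(\Omega\setminus C_0)^d}$, closing the $H^1(\Omega)^d$ bound. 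The operator-norm estimate on $\Lambda(\lambda,\mu) - \Lambda(\lambda_\epsilon,\mu_\epsilon)$ is then immediate from continuity of the trace $H^1(\Omega)^d \to L^2(\GammaN)^d$.
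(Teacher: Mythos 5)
Your proof is correct, but it takes a genuinely different route from the paper's. The paper never forms an energy identity for $w=Eu_g^{\lambda,\mu}-u_g^{\lambda_\epsilon,\mu_\epsilon}$ in the $\epsilon$-weighted form: instead it first derives the a priori bounds $\norm{\widehat{\nabla}u_\epsilon}_{L^2(C_\infty)^{d\times d}}\leq K\epsilon^{1/2}\norm{g}$ and $\norm{\widehat{\nabla}u_\epsilon}_{L^2(C_0)^{d\times d}}\leq K\epsilon^{-1/2}\norm{g}$ directly from the $\epsilon$-problem, and then measures the error in the \emph{limit} problem's norm $\norm{\,\cdot\,}_{\lambda,\mu}$, splitting $u-u_\epsilon$ by the orthogonal projection $P$ onto $\H_{C_0}^{C_\infty}$. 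The component in $(\H_{C_0}^{C_\infty})^\perp$ is controlled by Lemma~\ref{lemma:perp} (so the rigid-body structure on $C_\infty$ enters only implicitly, through the choice of test space), and the component in $\H_{C_0}^{C_\infty}$ reduces via the two weak formulations to an $\epsilon$-weighted pairing on $\partial C_0$ estimated exactly as in your ``routine'' step, i.e.\ by the $H_{\textup{div}}$ trace theorem. Your argument replaces the projection machinery by a single Galerkin-orthogonality identity with two boundary residuals, and handles the $\partial C_\infty$ residual by making the force-and-torque balance explicit (testing the extreme weak formulation against rigid motions on each component $C_i$, then subtracting the best rigid-motion approximation of $w$ via Korn's second inequality) before invoking the $\epsilon^{-1}$ penalty to gain the factor $\epsilon^{1/2}$. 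The two proofs ultimately rest on the same two quantitative mechanisms --- the explicit factor $\epsilon$ on $\partial C_0$ combined with an $H^{-1/2}(\partial C_0)^d$ stress-trace bound, and the $\epsilon^{-1}$ weight forcing $\widehat{\nabla}$ to be $O(\epsilon^{1/2})$ on $C_\infty$ --- and the final extension into $C_0^\circ$ and the operator-norm consequence are identical. What the paper's route buys is reuse of Lemma~\ref{lemma:perp} (needed again in Lemma~\ref{lemma:monoCinfty}) and avoidance of any integration by parts on $\partial C_\infty$; what yours buys is self-containedness and a physically transparent error equation. Two points deserve a sentence of care if you write this up: the localization of the $H^{-1/2}$--$H^{1/2}$ duality pairing to the individual, mutually separated boundary components $\GammaN$, $\partial C_0$, and $\partial C_i$, and the fact that the rigid motions you test against must indeed span both translations and infinitesimal rotations so that Korn's second inequality applies after the subtraction --- you have both, but state them explicitly.
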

\begin{proof}
	The proof is given in Section~\ref{sec:convproof}.
\end{proof}

\section{Reconstruction of extreme inclusions} \label{sec:mono}

Next, we will consider the reconstruction of inclusions via the monotonicity method. In order to obtain exact reconstruction methods, a unique continuation principle is needed for the background parameters; see Section~\ref{sec:localized}. In particular, it ensures the existence of certain localized solutions used for proving the more challenging ``only if'' direction of our reconstruction methods, by providing more local control over monotonicity inequalities for the ND maps.
\begin{definition} \label{def:ucp}
	Suppose $U\subseteq \overline{\Omega}$ is a relatively open and connected set that intersects $\GammaN$. We say that $\lambda_0$ and $\mu_0$ satisfy the weak \emph{unique continuation principle} (UCP) in $U$ for the linear elasticity problem, provided that only the trivial solution of 
	\begin{equation*}
		\nabla\cdot \bigl(\lambda_0(\nabla\cdot v)I+2\mu_0\widehat{\nabla}v\bigr) = 0 \text{ in } U^\circ
	\end{equation*}
	can be identically zero in a non-empty open subset of $U$, and likewise, only the trivial solution has vanishing Cauchy data on $\partial U\cap \GammaN$. If $U = \overline{\Omega}$ we simply say that $\lambda_0$ and $\mu_0$ satisfy the UCP.
\end{definition}
\begin{remark} \label{remark:ucp}
	From \cite[Theorem~1.2]{Lin2011} and \cite[Corollary~2.2]{Eberle2021a} (and the subsequent remarks therein), it follows that a sufficient condition for the UCP to hold for $\lambda_0$ and $\mu_0$ is that $\Omega$ is a bounded $C^{1,1}$ domain, $\lambda_0\in L^\infty_+(\Omega)$, and $\mu_0\in L^\infty_+(\Omega)\cap C^{0,1}(\Omega)$.
\end{remark}
We introduce a family of admissible test inclusions as
\begin{align*}
	\mathcal{A} &= \{\, C \Subset \Omega : C \text{ is the closure of an open set,}  \\
	&\hphantom{{}= \{C \Subset \Omega :{}\,}\text{has connected complement,} \\
	&\hphantom{{}= \{C \Subset \Omega :{}\,}\text{and has Lipschitz boundary } \partial C \,\}.
\end{align*}
\begin{definition} \label{def:posneginc}
	Consider mutually disjoint (possibly empty) sets $\Dp$, $\Dm$, $D_0$ and $D_\infty$, with $D_0,D_\infty\in\mathcal{A}$ and $\Dp,\Dm \Subset \Omega$ measurable. Let $\lambda_0,\mu_0,\lambda_{\pm},\mu_{\pm}\in L^\infty_+(\Omega)$ be such that
	\begin{equation*}
		\lambda_-\leq\lambda_0\leq\lambda_+ \quad\text{and}\quad \mu_-\leq\mu_0\leq\mu_+.
	\end{equation*} 
	We say that $\lambda$ and $\mu$ have \emph{inclusions} in $D = \Dp\cup\Dm\cup D_0\cup D_\infty$ if they are of the form
	\begin{equation} \label{eq:posinclame}
		\lambda = \begin{cases}
			\lambda_0 & \text{in } \Omega\setminus D \\
			\lambda_- & \text{in } \Dm \\
			\lambda_+ & \text{in } \Dp \\
			0 & \text{in } D_0 \\
			\infty & \text{in } D_\infty
		\end{cases}
		\quad\text{and}\quad
		\mu = \begin{cases}
			\mu_0 & \text{in } \Omega\setminus D \\
			\mu_- & \text{in } \Dm \\
			\mu_+ & \text{in } \Dp \\
			0 & \text{in } D_0 \\
			\infty & \text{in } D_\infty.
		\end{cases}
	\end{equation}
\end{definition}
Note that $\lambda_{\pm}$ and $\mu_{\pm}$ are not required to deviate from the background parameters at the same locations inside $D_{\textup{F}}^{\pm}$. 

The following technical assumption ensures that certain pathological cases cannot occur. It allows $\mu_{\pm}$ to e.g.\ jump or exhibit a strict local increase/decrease away from $\mu_0$ near $\partial D$. At a positive distance from $\partial D$, there are no such restrictions on $\mu_{\pm}$, and likewise, there are no such restrictions on $\lambda_{\pm}$ in general (even near $\partial D$). 
\begin{assumption} \label{assump:technical}
	Consider the setting in Definition~\ref{def:posneginc}. For every $x\in\partial D$ and every open neighborhood $W$ of $x$, there exists a relatively open connected set $V\subset D$ that intersects $\partial D$, and $V\subset \widetilde{D} \cap W$ for a set $\widetilde{D}\in \{\Dp,\Dm,D_0, D_\infty\}$. Moreover,
	\begin{itemize}
		\item If $\widetilde{D} = \Dp$, there exists an open ball $B\subset V$ such that $\inf_B(\mu_+ -\mu_0)>0$.
		\item If $\widetilde{D} = \Dm$, there exists an open ball $B\subset V$ such that $\sup_B(\mu_- -\mu_0)<0$.
	\end{itemize}
\end{assumption}
Our main result is Theorem~\ref{thm:monoext}, which assumes \emph{no information} on the magnitude of the perturbations and allows both positive and negative perturbations. It corresponds to building the reconstruction of the inclusions from larger sets, i.e.\ checking if a test inclusion $C$ contains $D$. For the test operators in the reconstruction method, we use the notation:
\begin{itemize}
	\setlength\itemsep{.2em}
	\item $\Lambda_{C}^{\emptyset}$ is the ND map with Lam\'e parameters $0$ in $C$, and with $\lambda_0$ and $\mu_0$ elsewhere.
	\item $\Lambda_{\emptyset}^C$ is the ND map with Lam\'e parameters $\infty$ in $C$, and with $\lambda_0$ and $\mu_0$ elsewhere.
\end{itemize} 
\begin{theorem} \label{thm:monoext} \needspace{2\baselineskip} {}\
	Let $\lambda$ and $\mu$ have inclusions in $D\in\mathcal{A}$, satisfying Assumption~\ref{assump:technical}. For $C\in\mathcal{A}$,
	\begin{equation*}
		D\subseteq C \quad \text{implies} \quad \Lambda_{C}^{\emptyset} \geq \Lambda(\lambda,\mu) \geq \Lambda_{\emptyset}^{C}.
	\end{equation*}
	Moreover, if $\lambda_0$ and $\mu_0$ satisfy the UCP, then
	\begin{equation*}
		\Lambda_{C}^{\emptyset} \geq \Lambda(\lambda,\mu) \geq \Lambda_{\emptyset}^{C} \quad \text{implies} \quad D\subseteq C.
	\end{equation*}
\end{theorem}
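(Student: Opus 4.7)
The plan is to dispatch the two implications separately, using the simultaneous localized potentials (Section~\ref{sec:localized}) and the monotonicity inequalities for extreme inclusions (Section~\ref{sec:monoineq}) announced in the introduction.

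\emph{Forward direction.} For the first implication I would rely on a pointwise comparison of Lam\'e parameters. Since $D\subseteq C$, on $C$ the configuration defining $\Lambda_{C}^{\emptyset}$ (parameters identically $0$) is everywhere no larger than $(\lambda,\mu)$, whereas the one defining $\Lambda_{\emptyset}^{C}$ (parameters identically $\infty$) is everywhere no smaller; off $C$ all three configurations coincide with $(\lambda_0,\mu_0)$. Substituting the solutions into the energy identity obtained from \eqref{eq:weakform} and appealing to the monotonicity inequalities of Section~\ref{sec:monoineq} (which are formulated to handle the extreme values $0$ and $\infty$) then gives
\[
    \langle \Lambda_{C}^{\emptyset} g, g\rangle \;\geq\; \langle \Lambda(\lambda,\mu) g, g\rangle \;\geq\; \langle \Lambda_{\emptyset}^{C} g, g\rangle, \qquad g\in L^2(\GammaN)^d,
\]
which is exactly the claimed Loewner ordering.

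\emph{Converse direction.} For the converse I would argue by contrapositive, so assume $D\not\subseteq C$. Since $D$ is the closure of an open set and $C$ is closed, there exists $x\in\partial D$ together with an open neighborhood $W$ satisfying $W\cap C=\emptyset$. Applying Assumption~\ref{assump:technical} to this pair $(x,W)$ produces a relatively open set $V\subset \widetilde{D}\cap W$ for some $\widetilde{D}\in\{\Dp,\Dm,D_0,D_\infty\}$, and, in the two finite-contrast cases, an open ball $B\subset V$ on which $\mu_\pm-\mu_0$ has a definite sign uniformly bounded away from zero. I would then invoke simultaneous localized potentials for the pair of test Lam\'e configurations defining $\Lambda_{C}^{\emptyset}$ and $\Lambda_{\emptyset}^{C}$: a sequence of Neumann data $g_k$ whose corresponding displacements have energy concentrating inside $B$ (or $V$, in the extreme sub-cases), while competing boundary and interior contributions remain bounded. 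Inserting $g_k$ into the appropriate monotonicity inequality of Section~\ref{sec:monoineq}, the concentrated energy term drives either $\langle(\Lambda_{C}^{\emptyset}-\Lambda(\lambda,\mu))g_k,g_k\rangle$ or $\langle(\Lambda(\lambda,\mu)-\Lambda_{\emptyset}^{C})g_k,g_k\rangle$ strictly negative for large $k$, contradicting the assumed ordering. Which inequality fails, and with which sign, is determined by whether $\widetilde{D}$ is $\Dp$, $\Dm$, $D_0$, or $D_\infty$; all four sub-cases need to be treated, but they are parallel in structure.

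\emph{Main obstacle.} The crucial technical step is the construction of the \emph{simultaneous} localized potentials, i.e.\ a single sequence $g_k$ whose solutions relative to two genuinely different PDEs (one with a perfectly elastic $C$, one with an infinitely stiff $C$) both exhibit the required concentration on a subset of $\Omega\setminus C$. The UCP hypothesis on $(\lambda_0,\mu_0)$ is what underwrites this construction, and the operator convergence statement of Theorem~\ref{thm:convergence} is what lets one build and control such sequences by approximating extreme configurations through truncated ones, uniformly across the four inclusion types appearing in Definition~\ref{def:posneginc}.
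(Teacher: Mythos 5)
Your proposal follows essentially the same route as the paper: monotonicity for the easy implication, and for the converse a contrapositive argument that places a ball $B\subset D\setminus C$ inside one of the four sub-regions via Assumption~\ref{assump:technical} and then blows up the relevant quadratic form with simultaneous localized potentials. Two points deserve sharpening. First, in the forward direction the monotonicity inequalities of Section~\ref{sec:monoineq} are \emph{not} formulated to compare two configurations with different extreme sets: Lemma~\ref{lemma:monoback} requires the same $C_0$ and $C_\infty$ for both parameter pairs, and Lemmas~\ref{lemma:monoCinfty}--\ref{lemma:monoC0} only add a single extreme set to an otherwise fixed configuration. So ``appealing to the monotonicity inequalities, which handle $0$ and $\infty$'' does not literally close this step; the paper instead truncates all extreme regions to $\epsilon^{\pm1}$ times the background, applies the finite-parameter comparison of Lemma~\ref{lemma:monoback} to the pointwise-ordered truncations, and passes to the limit using the operator-norm convergence of Theorem~\ref{thm:convergence} --- the ingredient you invoke only for the converse. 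Second, in the converse the simultaneous localization is needed not for the two test configurations on $C$, but for the whole chain of intermediate coefficient pairs interpolating between $(\lambda_0,\mu_0)$ and $(\lambda,\mu)$ (adding $\Dm\cup\Dp$, $D_0$ and $D_\infty$ one at a time), together with the test configuration on $C$; the telescoping decomposition of $\Lambda-\Lambda_{\emptyset}^{C}$ (resp.\ $\Lambda_{C}^{\emptyset}-\Lambda$) into these one-step differences, each estimated by the appropriate lemma among Lemmas~\ref{lemma:monoback}--\ref{lemma:monoC0}, is the structural backbone of Cases A--D that your sketch leaves implicit. Neither point is a wrong turn, but both are where the actual work lies.
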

\begin{proof}
	The proof is given in Section~\ref{sec:monoextproof}.
\end{proof}
In case the UCP holds, Theorem~\ref{thm:monoext} yields
\begin{equation*}
	D = \cap\,\{\, C\in\mathcal{A} : \Lambda_{C}^{\emptyset} \geq \Lambda(\lambda,\mu) \geq \Lambda_{\emptyset}^{C} \,\}.
\end{equation*}
Moreover, the proof of the theorem allows the following conclusions: If there are only positive inclusions near $\partial D$ (from $\Dp$ or $D_\infty$), then one only needs to consider the inequality $\Lambda(\lambda,\mu) \geq \Lambda_{\emptyset}^{C}$. Likewise, if there are only negative inclusions near $\partial D$ (from $\Dm$ or $D_0$), then only the inequality $\Lambda_{C}^{\emptyset} \geq \Lambda(\lambda,\mu)$ needs to be considered.

The following two theorems correspond to special cases that allow building the reconstruction of the inclusions from the inside, i.e.\ checking if an open set $B$ is contained in $D$. For simplicity, we only present these results for extreme inclusions. In the presence of non-extreme inclusions, the bounds on $\beta$ would depend on \emph{a priori} lower bounds on the contrast of the finite perturbations.

In Theorems~\ref{thm:monopos} and \ref{thm:mononeg}, we use the following short-hand notations:
\begin{align*}
	\Lambda_0 &= \Lambda(\lambda_0,\mu_0), \\
	\Lambda_{\beta,B} &= \Lambda(\lambda_0+\beta\chi_B,\mu_0+\beta\chi_B), \\
	\inner{\DLambda_{\beta,B}g,g} &= -\beta\int_B \abs{\nabla\cdot u_g^{\lambda_0,\mu_0}}^2 + 2\frob{\widehat{\nabla}u_g^{\lambda_0,\mu_0}}^2\,\di x.
\end{align*}
Note that $\DLambda_{\beta,B}$ is the Frech\'et derivative of $\Lambda$ at $(\lambda_0,\mu_0)$ in the direction $(\beta\chi_B,\beta\chi_B)$, where $\chi_B$ is the characteristic function of a measurable set $B\subseteq\Omega$; see \cite[Lemma~2.3]{Eberle2021}.

\begin{theorem} \label{thm:monopos}
	Let $\beta>0$ for $\Lambda_{\beta,B}$ and $0<\beta < \min\{\inf(\lambda_0),\inf(\mu_0)\}$ for $\DLambda_{\beta,B}$. Let $B\subseteq\Omega$ be an open set and let $D\in\mathcal{A}$. Then
	\begin{equation*}
		B \subset D \quad \text{implies} \quad \Lambda_{\beta,B} \geq \Lambda_{\emptyset}^{D} \quad\text{and}\quad \Lambda_0+\DLambda_{\beta,B} \geq \Lambda_{\emptyset}^{D}.
	\end{equation*}
	Moreover, if $\lambda_0$ and $\mu_0$ satisfy the UCP, then
	\begin{equation*}
	  \text{both} \quad \Lambda_{\beta,B} \geq \Lambda_{\emptyset}^{D} \quad\text{and}\quad \Lambda_0+\DLambda_{\beta,B} \geq \Lambda_{\emptyset}^{D} \quad \text{imply} \quad B\subset D.
	\end{equation*}
\end{theorem}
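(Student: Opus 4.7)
My plan is to combine the new monotonicity inequalities for extreme inclusions (Section~\ref{sec:monoineq}) with the localized potentials of Section~\ref{sec:localized}, using Theorem~\ref{thm:convergence} to transfer estimates between finite-parameter and extreme-parameter ND maps. For the forward direction, I first obtain $\Lambda_0 + \DLambda_{\beta, B} \geq \Lambda_{\beta, B}$ by applying the standard lower monotonicity inequality for finite parameter pairs to $(\lambda_0, \mu_0)$ and $(\lambda_0 + \beta\chi_B, \mu_0 + \beta\chi_B)$; indeed, the right-hand side of that inequality is precisely $-\inner{\DLambda_{\beta, B} g, g}$. It therefore suffices to prove $\Lambda_{\beta, B} \geq \Lambda_\emptyset^D$ whenever $B \subset D$. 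I split this via the intermediate map $\Lambda_\emptyset^B$: the step $\Lambda_{\beta, B} \geq \Lambda_\emptyset^B$ follows from the extreme-inclusion monotonicity inequality combined with the operator convergence of Theorem~\ref{thm:convergence}, by truncating the infinite stiffness on $B$ by $\epsilon^{-1}$, applying finite-parameter monotonicity, and passing to the limit; and $\Lambda_\emptyset^B \geq \Lambda_\emptyset^D$ follows directly from $B \subset D$ via the embedding $\H_\emptyset^D \subseteq \H_\emptyset^B$ and the variational expression $\inner{\Lambda(\lambda, \mu) g, g} = \norm{u_g^{\lambda, \mu}}_{\lambda, \mu}^2$.

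For the converse under the UCP, I argue by contrapositive: assume $B \not\subset D$ and pick an open ball $B' \subset B \setminus D$ (possible since $B$ is open and $D$ is closed). It then suffices to exhibit a test field that breaks at least \emph{one} of the two hypothesized inequalities. Applying the upper monotonicity inequality for extreme inclusions from Section~\ref{sec:monoineq} to $\Lambda_{\beta, B} - \Lambda_\emptyset^D$ yields a bound of the schematic form
\begin{equation*}
\inner{(\Lambda_{\beta, B} - \Lambda_\emptyset^D) g, g} \leq -c \int_{B'} \bigl( \abs{\nabla \cdot u}^2 + 2\frob{\widehat{\nabla} u}^2 \bigr) \di x + R(g),
\end{equation*}
where $u = u_g^{\lambda, \mu}$ corresponds to $\Lambda_\emptyset^D$, $c > 0$ depends on $\beta$ and the background, and $R(g)$ is controlled by $\norm{g}^2$. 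Using the localized potentials for the parameters of $\Lambda_\emptyset^D$, I select $(g_n) \subset L^2(\GammaN)^d$ with bounded norm but $\int_{B'} \frob{\widehat{\nabla} u_n}^2 \di x \to \infty$; this drives the right-hand side to $-\infty$, refuting $\Lambda_{\beta, B} \geq \Lambda_\emptyset^D$.

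The main obstacle is the matching refutation for the linearized inequality. Expanding $\inner{(\Lambda_0 + \DLambda_{\beta, B} - \Lambda_\emptyset^D) g, g}$ as $\inner{(\Lambda_0 - \Lambda_\emptyset^D) g, g} - \beta \int_B \bigl( \abs{\nabla \cdot u_g^{\lambda_0, \mu_0}}^2 + 2\frob{\widehat{\nabla} u_g^{\lambda_0, \mu_0}}^2 \bigr) \di x$, the first summand is nonnegative and bounded by $\norm{\Lambda_0 - \Lambda_\emptyset^D}\norm{g}^2$, so one needs to blow up the background-energy integral on $B' \subset B$ faster than $\norm{g}^2$. This requires localized potentials for the background problem $(\lambda_0, \mu_0)$ concentrated on $B'$, whose existence genuinely relies on the UCP for $(\lambda_0, \mu_0)$; the simultaneous-localization framework of Section~\ref{sec:localized} guarantees such a sequence and, moreover, that it can be chosen consistently for both the extreme-inclusion problem (used above) and the background problem (needed here), which is what makes the two refutations mesh cleanly.
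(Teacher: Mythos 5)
Your outline for the first implication ($B\subset D\Rightarrow\Lambda_{\beta,B}\geq\Lambda_\emptyset^D$) is essentially sound and close to the paper's argument (truncate on $D$, apply Lemma~\ref{lemma:monoback}, pass to the limit via Theorem~\ref{thm:convergence}), although your detour through $\Lambda_\emptyset^B$ requires $\overline{B}$ to be admissible for the extreme forward problem, which an arbitrary non-empty open $B$ need not be. The first genuine error is the claimed reduction $\Lambda_0+\DLambda_{\beta,B}\geq\Lambda_{\beta,B}$: applying \eqref{eq:monoback1} to $(\lambda_0,\mu_0)$ and $(\lambda_0+\beta\chi_B,\mu_0+\beta\chi_B)$ gives
$\inner{(\Lambda_0-\Lambda_{\beta,B})g,g}\leq\beta\int_B\abs{\nabla\cdot u_0}^2+2\frob{\widehat{\nabla}u_0}^2\,\di x=-\inner{\DLambda_{\beta,B}g,g}$, i.e.\ $\Lambda_0+\DLambda_{\beta,B}\leq\Lambda_{\beta,B}$ --- the opposite of what you assert. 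The linearized test operator lies \emph{below} the nonlinear one, and $\Lambda_0+\DLambda_{\beta,B}\geq\Lambda_\emptyset^D$ in fact fails for large $\beta$; this is exactly why the theorem imposes $\beta<\min\{\inf(\lambda_0),\inf(\mu_0)\}$ for $\DLambda_{\beta,B}$, a hypothesis your argument never invokes. The paper instead bounds $\inner{(\Lambda_0+\DLambda_{\beta,B}-\Lambda(\lambda_\epsilon,\mu_\epsilon))g,g}$ from below directly via \eqref{eq:monoback2}, where the lower bound $(1-\epsilon)\int_D\lambda_0\abs{\nabla\cdot u_0}^2+2\mu_0\frob{\widehat{\nabla}u_0}^2\,\di x$ dominates $\beta\int_B\abs{\nabla\cdot u_0}^2+2\frob{\widehat{\nabla}u_0}^2\,\di x$ precisely because $(1-\epsilon)\lambda_0\geq\beta$ and $(1-\epsilon)\mu_0\geq\beta$ on $B\subset D$.

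Second, your converse direction rests on two impossible requirements. You cannot choose $(g_n)$ of bounded norm with $\int_{B'}\frob{\widehat{\nabla}u_n}^2\,\di x\to\infty$: continuous dependence gives $\norm{u_g}_{H^1}\leq K\norm{g}$, so bounded data forces bounded interior energy. Likewise, the background energy on $B'$ can never blow up ``faster than $\norm{g}^2$'', since $2\inf(\mu_0)\int_{B'}\frob{\widehat{\nabla}u_0}^2\,\di x\leq\inner{\Lambda_0 g,g}\leq K\norm{g}^2$ always. The fix is not to bound the remainder by $\norm{g}^2$ at all, but by an energy integral over $D$: Lemma~\ref{lemma:monoCinfty} gives $\inner{(\Lambda_0-\Lambda_\emptyset^D)g,g}\leq K\int_D\frob{\widehat{\nabla}u_0}^2\,\di x$, and since $D\subset\Omega\setminus\overline{U}$ this term is driven to zero by the very localized potentials (for the background parameters, via Lemma~\ref{lemma:locpot}) whose energy blows up on an open ball $\widehat{B}\subset B\setminus D$. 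With that replacement both refutations go through as in the paper, using only $u_0$ throughout; no localized potentials for the extreme problem are needed in this theorem.
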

\begin{proof}
	The proof is given in Section~\ref{sec:monoposproof}.
\end{proof}

\begin{theorem} \label{thm:mononeg}
	Let $0<\beta < \min\{\inf(\lambda_0),\inf(\mu_0)\}$, let $B\subseteq\Omega$ be an open set, and let $D\in\mathcal{A}$. Then
	\begin{equation*}
		B \subset D \quad \text{implies} \quad \Lambda_{D}^{\emptyset} \geq \Lambda_{-\beta,B} \quad\text{and}\quad \Lambda_{D}^{\emptyset} \geq \Lambda_0+\DLambda_{-\beta,B}.
	\end{equation*}
	Moreover, if $\lambda_0$ and $\mu_0$ satisfy the UCP, then
	\begin{equation*}
	  \text{both}\quad \Lambda_{D}^{\emptyset} \geq \Lambda_{-\beta,B} \quad\text{and}\quad \Lambda_{D}^{\emptyset} \geq \Lambda_0+\DLambda_{-\beta,B} \quad \text{imply} \quad B\subset D.
	\end{equation*}
\end{theorem}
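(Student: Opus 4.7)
The proof should closely mirror the structure of Theorem~\ref{thm:monopos}, with the roles of positive and negative perturbations interchanged. The plan is to handle the ``direct'' implication with two monotonicity inequalities (one nonlinear, one linearized) and the converse (under UCP) via a contrapositive argument driven by the simultaneous localized potentials constructed in Section~\ref{sec:localized}.

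For the forward implication $B\subset D\Rightarrow \Lambda_{D}^{\emptyset} \geq \Lambda_{-\beta,B}$, I would invoke the lower-bound monotonicity inequality from Section~\ref{sec:monoineq}, applied to the finite pair $(\lambda_0-\beta\chi_B,\mu_0-\beta\chi_B)$ and the extreme pair with $C_0=D$, $C_\infty=\emptyset$. Because $B\subset D$ and $\beta<\min\{\inf(\lambda_0),\inf(\mu_0)\}$, one has pointwise $\lambda_0-\beta\chi_B\geq 0$ and $\mu_0-\beta\chi_B\geq 0$ on $D$ (with equality to $\lambda_0$, $\mu_0$ outside $D$), which is exactly the ordering the monotonicity inequality requires to yield $\Lambda_{D}^{\emptyset}\geq \Lambda_{-\beta,B}$. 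For the linearized counterpart $\Lambda_{D}^{\emptyset}\geq \Lambda_0+\DLambda_{-\beta,B}$, I would apply the same monotonicity inequality but to the pair $(\lambda_0,\mu_0)$ vs.\ the extreme pair with $C_0=D$, obtaining
\begin{equation*}
	\inner{(\Lambda_{D}^{\emptyset}-\Lambda_0)g,g} \geq \int_D \bigl(\lambda_0\abs{\nabla\cdot u_g^{\lambda_0,\mu_0}}^2+2\mu_0\frob{\widehat{\nabla}u_g^{\lambda_0,\mu_0}}^2\bigr)\di x.
\end{equation*}
Since $\lambda_0,\mu_0\geq \beta$ a.e.\ and $B\subset D$, the right-hand side is bounded below by $\beta\int_B(\abs{\nabla\cdot u_g^{\lambda_0,\mu_0}}^2+2\frob{\widehat{\nabla}u_g^{\lambda_0,\mu_0}}^2)\di x$, which equals $\inner{\DLambda_{-\beta,B}g,g}$ by the formula given before the theorem.

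For the converse, assume UCP and suppose $B\not\subset D$. Since $B$ is open and $D\in\mathcal{A}$ is closed, $B\setminus D$ is a nonempty open set, and I would pick an open ball $B_0\subset B\setminus D$. Using the localized-potentials result of Section~\ref{sec:localized} applied \emph{simultaneously} to the background pair $(\lambda_0,\mu_0)$ and the perturbed finite pair $(\lambda_0-\beta\chi_B,\mu_0-\beta\chi_B)$, I would construct Neumann data $g_n$ whose associated solutions have $L^2$ strain energies that blow up on $B_0$ while remaining uniformly bounded on $D$. Coupled with upper-bound monotonicity inequalities (the ``reverse'' Loewner estimates that bound $\inner{(\Lambda_{D}^{\emptyset}-\Lambda_{-\beta,B})g,g}$ and $\inner{(\Lambda_{D}^{\emptyset}-\Lambda_0)g,g}$ from above by integrals involving the perturbed or background solutions on $D$ respectively), these localized potentials force
\begin{equation*}
    \inner{(\Lambda_{D}^{\emptyset}-\Lambda_{-\beta,B})g_n,g_n}\to -\infty \quad\text{and}\quad \inner{(\Lambda_{D}^{\emptyset}-\Lambda_0-\DLambda_{-\beta,B})g_n,g_n}\to-\infty,
\end{equation*}
contradicting either of the assumed Loewner inequalities. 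The role of $\beta<\min\{\inf(\lambda_0),\inf(\mu_0)\}$ is to guarantee that the perturbed Lamé parameters remain in $L^\infty_+(\Omega)$, so that the localized potential machinery and the upper monotonicity estimates genuinely apply to both pairs.

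The main obstacle I anticipate is the converse direction: it requires pairing the correct \emph{upper} monotonicity inequalities for the extreme case (in which the $H^1$-extension $E$ of Definition~\ref{def:extension} is unavoidable) with a version of the localized potentials result that works simultaneously for two distinct pairs of Lamé parameters. The linearized bound is particularly delicate, because the test functional $\DLambda_{-\beta,B}$ is evaluated at the background solution $u_g^{\lambda_0,\mu_0}$ while the natural upper estimate for $\Lambda_D^\emptyset-\Lambda_0$ produces integrands in the same background solution, so the two localization requirements must be compatible; this is exactly the kind of simultaneous localization that the paper advertises as one of its new ingredients.
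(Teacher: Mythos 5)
Your overall architecture matches the paper's: forward implications via monotonicity inequalities combined with the $\epsilon$-truncation and the operator-norm limit of Theorem~\ref{thm:convergence}, converse via a contrapositive argument with localized potentials. Your linearized forward step is correct and in fact slightly more economical than the paper's (which redoes the truncation rather than quoting the lower bound \eqref{eq:monoC02} of Lemma~\ref{lemma:monoC0} directly). For the first forward step, note that no lemma in Section~\ref{sec:monoineq} directly compares a finite pair with an extreme pair having a different $C_0$; you either need the paper's route (truncate to $\epsilon\lambda_0$, $\epsilon\mu_0$ on $D$, apply Lemma~\ref{lemma:monoback}, and pass to the limit), or you should observe that $(\lambda_0-\beta\chi_B,\mu_0-\beta\chi_B)$ agrees with $(\lambda_0,\mu_0)$ outside $D$ and apply Lemma~\ref{lemma:monoC0} with the perturbed pair playing the role of the background. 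This is a presentational gap, easily filled.

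The genuine gap is in the converse. You propose to localize simultaneously for the background pair and the perturbed finite pair $(\lambda_0-\beta\chi_B,\mu_0-\beta\chi_B)$, but the perturbed pair never needs localization, and the upper bounds you would need are not expressed in terms of its solution. The only usable upper bound for $\inner{(\Lambda_D^\emptyset-\Lambda_{-\beta,B})g,g}$ is obtained by applying \eqref{eq:monoback1} to the truncated parameters and letting $\epsilon\to 0$; this produces \eqref{eq:extramononeg}, whose integrands --- including the negative term $-\beta\int_B(\abs{\nabla\cdot\widehat{u}}^2+2\frob{\widehat{\nabla}\widehat{u}}^2)\,\di x$ --- are all evaluated at the \emph{extreme} solution $\widehat{u}=Eu_g^{\lambda,\mu}$ with $C_0=D$. (The alternative bound \eqref{eq:monoback2}, which would put the integrand on the perturbed solution, carries a factor $\lambda_2/\lambda_1$ that degenerates as the parameters on $D$ tend to zero, so there is no usable upper estimate ``involving the perturbed solutions on $D$'' as you claim.) Similarly, the upper bound \eqref{eq:monoC01} for $\inner{(\Lambda_D^\emptyset-\Lambda_0)g,g}$ is in terms of $Eu_{D}^{\emptyset}$, not the background solution as you assert; if it were in the background solution, no simultaneous localization would be needed at all for the linearized statement. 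Consequently, the simultaneous localization required is for the background pair \emph{and} the extreme pair with $C_0=D$, i.e., Lemma~\ref{lemma:locpotextreme}: it gives blow-up of both $\nabla\cdot$ and $\widehat{\nabla}$ for $Eu_{g_i}^{\lambda,\mu}$ on the ball in $B\setminus D$ together with decay on $D\subset\Omega\setminus\overline{U}$. With that substitution (the extreme solution blowing up on the ball for the nonlinear inequality, the background solution blowing up for the linearized one, and the extreme solution decaying on $D$ in both), your contradiction argument goes through exactly as in the paper.
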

\begin{proof}
	The proof is given in Section~\ref{sec:mononegproof}.
\end{proof}
Theorems~\ref{thm:monopos} and \ref{thm:mononeg} allow the representations
\begin{align*}
	D^\circ &= \cup\,\{\, B\subset \Omega \text{ open ball} : \Lambda_{\beta,B} \geq \Lambda_{\emptyset}^{D} \,\} \\
	&= \cup\,\{\, B\subset \Omega \text{ open ball} : \Lambda_0+\DLambda_{\beta,B} \geq \Lambda_{\emptyset}^{D} \,\} \\
	&= \cup\,\{\, B\subset \Omega \text{ open ball} : \Lambda_{D}^{\emptyset} \geq \Lambda_{-\beta,B} \,\} \\
	&= \cup\,\{\, B\subset \Omega \text{ open ball} : \Lambda_{D}^{\emptyset} \geq \Lambda_0+\DLambda_{-\beta,B} \,\},
\end{align*}
assuming the UCP holds and $\beta$ is chosen accordingly. Of course, ``open ball'' may be replaced by some other basis of open sets for the Euclidean topology.

\section{Linearized outer reconstruction of non-extreme inclusions} \label{sec:linear_outer}

Regarding Definition~\ref{def:posneginc}, we call the inclusions \emph{non-extreme} if $D_0=\emptyset=D_\infty$, such that $D = \Dm\cup\Dp$. Since the outer approach from Theorem~\ref{thm:monoext} has not been explored earlier for linear elasticity with non-extreme inclusions, we investigate this topic a bit further here. See also \cite{Garde2025,Garde2019,Harrach2013} for related considerations in the context of electrical impedance tomography. 

In the non-extreme case, as a corollary to Theorem~\ref{thm:monoext} and Lemma~\ref{lemma:monoback}, one could use a finite perturbation in the test operators based on the magnitudes of $\abs{\lambda_{\pm}-\lambda_0}$ and $\abs{\mu_{\pm}-\mu_0}$. Since this is such a straightforward consequence of the extreme result, we do not elaborate further.

The more important result in the case of non-extreme inclusions is that the outer approach can be formulated with linearized test operators, which is useful for speeding up a numerical implementation. The downside, compared to Theorem~\ref{thm:monoext}, is that we now need to know bounds on the magnitudes of the perturbations. However, unlike the extreme case, we do not require Lipschitz boundary regularity of the admissible test inclusions here: 
\begin{equation*}
	\widehat{\mathcal{A}} = \{\, C \Subset \Omega : C \text{ is the closure of an open set and has connected complement} \,\}.
\end{equation*}

The Frech\'et derivative of $\Lambda$ at $(\lambda_0,\mu_0)$ in the direction $(\eta_\lambda,\eta_\mu)$ is given by (see~\cite[Lemma~2.3]{Eberle2021})
\begin{equation*}
	\inner{\DLambda(\lambda_0,\mu_0;\eta_\lambda,\eta_\mu)g,g} = -\int_\Omega \eta_\lambda\abs{\nabla\cdot u_g^{(\lambda_0,\mu_0)}}^2 + 2\eta_\mu\frob{\widehat{\nabla}u_g^{(\lambda_0,\mu_0)}}^2\,\di x.
\end{equation*}
We assume there are \emph{known} positive scalars satisfying
\begin{alignat*}{2}
	\alpha_\lambda &\leq \min\{\inf(\lambda_0),\inf(\lambda)\}, &\qquad \alpha_\mu &\leq \min\{\inf(\mu_0),\inf(\mu)\}, \\
	\beta_\lambda &\geq \max\{\sup(\lambda_0),\sup(\lambda)\}, &\qquad \beta_\mu &\geq \max\{\sup(\mu_0),\sup(\mu)\},
\end{alignat*}
and we define the test operators
\begin{align*}
	\DLambda_C^+ &= \DLambda(\lambda_0,\mu_0;(\beta_\lambda-\lambda_0)\chi_C,(\beta_\mu-\mu_0)\chi_C), \\
	\DLambda_C^- &= \DLambda(\lambda_0,\mu_0;(\lambda_0-\tfrac{\beta_\lambda^2}{\alpha_\lambda})\chi_C,(\mu_0-\tfrac{\beta_\mu^2}{\alpha_\mu})\chi_C),
\end{align*}
where $\chi_C$ is a characteristic function on $C\in\widehat{\mathcal{A}}$.

\begin{theorem} \label{thm:monofinitelin}  {}\
	Let $\lambda$ and $\mu$ have \emph{non-extreme} inclusions in $D\in\widehat{\mathcal{A}}$, satisfying Assumption~\ref{assump:technical}. For $C\in\widehat{\mathcal{A}}$,
	\begin{equation*}
		D\subseteq C \quad \text{implies} \quad \DLambda_C^- \geq \Lambda(\lambda,\mu)-\Lambda(\lambda_0,\mu_0) \geq \DLambda_C^+.
	\end{equation*}
	Moreover, if $\lambda_0$ and $\mu_0$ satisfy the UCP, then
	\begin{equation*}
		\DLambda_C^- \geq \Lambda(\lambda,\mu)-\Lambda(\lambda_0,\mu_0) \geq \DLambda_C^+ \quad \text{implies} \quad D\subseteq C.
	\end{equation*}
\end{theorem}
\begin{proof}
	The proof is given in Section~\ref{sec:monofinitelinptoof}.
\end{proof}
Theorem~\ref{thm:monofinitelin} yields
\begin{equation*}
	D = \cap\,\{\, C\in\mathcal{A} : \DLambda_C^- \geq \Lambda(\lambda,\mu)-\Lambda(\lambda_0,\mu_0) \geq \DLambda_C^+ \,\},
\end{equation*}
assuming the UCP holds. Moreover, similar to the discussion after Theorem~\ref{thm:monoext}, if there are only positive non-extreme inclusions near $\partial D$, then we only need to consider the inequality $\Lambda(\lambda,\mu)-\Lambda(\lambda_0,\mu_0) \geq \DLambda_C^+$. If there are only negative non-extreme inclusions near $\partial D$, then we only need to consider the inequality $\DLambda_C^- \geq \Lambda(\lambda,\mu)-\Lambda(\lambda_0,\mu_0)$.

\section{Orthogonal projections} \label{sec:projection}

In this section, we assume $C_0$ and $C_\infty$, with $C = C_0\cup C_\infty$, satisfy Assumption~\ref{assump}, and let $\lambda$ and $\mu$ be as in \eqref{eq:extremeLame}, with background parameters $\lambda_0,\mu_0\in L^\infty_+(\Omega)$ and extreme inclusions in $C$.

We define two orthogonal projections:
\begin{itemize}
	\setlength\itemsep{.2em}
	\item $P$ is the orthogonal projection of $\H_{C_0}^{\emptyset}$ onto $\H_{C_0}^{C_\infty}$ w.r.t.\ $\inner{\,\cdot\,,\,\cdot\,}_{\lambda,\mu}$.
	\item $P^\perp$ is the orthogonal projection of $\H_{C_0}^{\emptyset}$ onto $(\H_{C_0}^{C_\infty})^\perp$ w.r.t.\ $\inner{\,\cdot\,,\,\cdot\,}_{\lambda,\mu}$.
\end{itemize}
\begin{lemma} \label{lemma:altcharacterization}
	For any $g\in L^2(\GammaN)^d$, $u_g^{\lambda,\mu}$ can be uniquely characterized as follows: 
	\begin{equation*}
		u_g^{\lambda,\mu} = P(u_g^{\lambda_0,\mu_0}|_{\Omega\setminus C_0} - w),
	\end{equation*}
	where $w$ is the unique solution in $H^1(\Omega\setminus C_0)^d$ to
	\begin{align*} 
		\nabla\cdot \bigl(\lambda_0(\nabla\cdot w)I+2\mu_0\widehat{\nabla}w\bigr) &=0 \text{ in } \Omega\setminus C_0, \\
		w &= 0 \text{ on } \GammaD, \\
		\bigl(\lambda_0(\nabla\cdot w)I+2\mu_0\widehat{\nabla}w\bigr)n &= \begin{cases}
			0 &\text{on } \GammaN, \\
			\bigl(\lambda_0(\nabla\cdot u_g^{\lambda_0,\mu_0})I+2\mu_0\widehat{\nabla}u_g^{\lambda_0,\mu_0}\bigr)n &\text{on } \partial C_0.
		\end{cases}
	\end{align*}
\end{lemma}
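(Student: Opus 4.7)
The plan is to verify that $\tilde u := u_g^{\lambda_0,\mu_0}|_{\Omega\setminus C_0} - w$ satisfies $\inner{\tilde u, v}_{\lambda,\mu} = \inner{g,v}$ for every $v \in \H_{C_0}^{C_\infty}$, and to conclude via Lax--Milgram uniqueness in \eqref{eq:weakform} that $P\tilde u = u_g^{\lambda,\mu}$. Well-posedness of $w$ itself follows from Lax--Milgram in the Hilbert space $\H_{C_0}^\emptyset$ equipped with $\inner{\,\cdot\,,\,\cdot\,}_{\lambda,\mu}$ (coercive via Korn and Poincar\'e, as recorded in Section~\ref{sec:forward}), with right-hand side the continuous linear functional given by pairing the trace on $\partial C_0$ of the test function with the background conormal trace of $u_g^{\lambda_0,\mu_0}$. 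In particular $\tilde u \in \H_{C_0}^\emptyset$, so the candidate $P\tilde u \in \H_{C_0}^{C_\infty}$ is meaningful.

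For the key identity, fix $v \in \H_{C_0}^{C_\infty}$ and split $\inner{\tilde u, v}_{\lambda,\mu} = I_1 - I_2$ according to the two summands of $\tilde u$. Green's identity on $\Omega \setminus C_0$, together with $u_g^{\lambda_0,\mu_0}$ solving the background problem on all of $\Omega$ (so in particular having vanishing divergence of stress in $\Omega\setminus C_0$ and conormal trace $g$ on $\GammaN$) and $v|_{\GammaD}=0$, yields
\begin{equation*}
I_1 = \inner{g,v} + \mathcal{B},
\end{equation*}
where $\mathcal{B}$ is the boundary pairing of the background conormal trace of $u_g^{\lambda_0,\mu_0}$ on $\partial C_0$ against $v|_{\partial C_0}$. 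The analogous computation for $w$, using its vanishing conormal trace on $\GammaN$ and its prescribed conormal trace on $\partial C_0$ (equal to that of $u_g^{\lambda_0,\mu_0}$ by construction), produces $I_2 = \mathcal{B}$. The $\mathcal{B}$ contributions cancel, leaving $\inner{\tilde u, v}_{\lambda,\mu} = \inner{g,v}$.

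Since $P$ is $\inner{\,\cdot\,,\,\cdot\,}_{\lambda,\mu}$-orthogonal onto $\H_{C_0}^{C_\infty}$, for every $v \in \H_{C_0}^{C_\infty}$ we have $\inner{P\tilde u, v}_{\lambda,\mu} = \inner{\tilde u, v}_{\lambda,\mu} = \inner{g, v}$. Hence $P\tilde u$ satisfies \eqref{eq:weakform}, and uniqueness of the weak solution of \eqref{eq:weakform} forces $u_g^{\lambda,\mu} = P\tilde u$, as claimed.

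The main delicacy I foresee is the rigorous handling of the conormal trace of $u_g^{\lambda_0,\mu_0}$ on $\partial C_0$: since $u_g^{\lambda_0,\mu_0}$ is only $H^1$-regular, this trace merely belongs to $H^{-1/2}(\partial C_0)^d$, so both the Neumann datum defining $w$ and the pairing $\mathcal{B}$ must be interpreted via the $H^{1/2}$--$H^{-1/2}$ duality. This can be sidestepped by testing the weak form of the background problem on $\Omega$ against the $H^1$-extension $Ev$ of $v \in \H_{C_0}^{C_\infty}$: the $\partial C_0$ integrals never appear explicitly, the interior $C_0$ contribution from the split $\int_\Omega = \int_{\Omega\setminus C_0}+\int_{C_0}$ coincides with $I_2$ via the weak form of $w$, and the cancellation becomes purely algebraic.
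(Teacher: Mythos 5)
Your proof is correct and follows essentially the same route as the paper: identify $u_g^{\lambda_0,\mu_0}|_{\Omega\setminus C_0}-w$ as the solution of the intermediate problem with the extreme inclusion $C_0$ but without $C_\infty$, then use the orthogonality (self-adjointness) of $P$ and the unique solvability of \eqref{eq:weakform} to conclude. The only difference is that you verify via Green's identity (or, more cleanly, by testing the background weak form against $Ev$) the step the paper dismisses with ``it is clear that''; this is an elaboration of the same argument rather than a different one.
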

\begin{proof}
	From the boundary value problems defining $u_g^{\lambda_0,\mu_0}$ and $w$, it is clear that
	\begin{equation}
		\widehat{u} = u_g^{\lambda_0,\mu_0}|_{\Omega\setminus C_0} - w
	\end{equation}
	corresponds to the case with the extreme inclusion $C_0$, but without $C_\infty$. Recall that $P$ is an orthogonal projection and hence self-adjoint with respect to the inner product \eqref{eq:innerweak}. Due to the weak formulation satisfied by  $\widehat{u}$, the following thus holds for all $Pv = v\in \H_{C_0}^{C_\infty}\subseteq\H_{C_0}^{\emptyset}$:
	\begin{equation*}
		\inner{g,v} = \inner{\widehat{u},Pv}_{\lambda,\mu} = \inner{P\widehat{u},v}_{\lambda,\mu}.
	\end{equation*}
	By the unique solvability of the variational problem \eqref{eq:weakform}, we conclude that $u_g^{\lambda,\mu} = P\widehat{u}$.
\end{proof}

\begin{lemma}\label{lemma:perp}
	If $v \in \H_{C_0}^{\emptyset}$ satisfies $\nabla\cdot \bigl( \lambda(\nabla\cdot v)I+2\mu\widehat{\nabla}v \bigr)=0$ in $\Omega\setminus C$, then there is a constant $K>0$ (independent of $v$) such that
	\begin{equation*}
		\norm{P^\perp v}_{\lambda,\mu} \leq K\norm{\widehat{\nabla}v}_{L^2(C_\infty)^{d\times d}}.
	\end{equation*}    
\end{lemma}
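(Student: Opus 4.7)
The idea is to use the variational characterization $\norm{P^\perp v}_{\lambda,\mu} = \min_{\phi \in \H_{C_0}^{C_\infty}} \norm{v-\phi}_{\lambda,\mu}$, which holds because $P$ is the $\inner{\,\cdot\,,\,\cdot\,}_{\lambda,\mu}$-orthogonal projection onto $\H_{C_0}^{C_\infty}$. The plan is to exhibit a concrete competitor $\phi \in \H_{C_0}^{C_\infty}$ with $\norm{v - \phi}_{\lambda,\mu} \leq K \norm{\widehat{\nabla} v}_{L^2(C_\infty)^{d\times d}}$. The natural candidate is $\phi = v$ away from $C_\infty$ and $\phi$ equal to a rigid motion on each component of $C_\infty$ (these being the only admissible profiles there), glued together by an $H^1$ correction supported in a collar around each component.

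Enumerate the connected components $\{C_i\}$ of $C_\infty$. By Assumption~\ref{assump}, each $C_i$ is a Lipschitz closed set contained in $\Omega$, and the closed sets $C_0,C_1,C_2,\dots$ are pairwise disjoint, hence at positive distance from each other and from $\GammaD\subset\partial\Omega$. Korn's second inequality on $C_i$ supplies a rigid motion $r_i$ with $\norm{v - r_i}_{H^1(C_i)^d} \leq K \norm{\widehat{\nabla} v}_{L^2(C_i)^{d\times d}}$. Let $\mathcal{E}_i$ be a bounded Sobolev extension operator from $H^1(C_i)^d$ to $H^1(\R^d)^d$ (available since $C_i$ is Lipschitz), and fix a smooth cutoff $\chi_i$ that equals $1$ on a neighborhood of $C_i$ with support avoiding $C_0$, the other $C_j$, and $\GammaD$. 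Setting $w_i := \chi_i\,\mathcal{E}_i(v|_{C_i} - r_i)$ produces $w_i \in H^1(\Omega \setminus C_0)^d$ with $w_i|_{C_i} = v - r_i$, $w_i|_{\GammaD} = 0$, mutually disjoint supports, and $\norm{w_i}_{H^1(\Omega \setminus C_0)^d} \leq K \norm{\widehat{\nabla} v}_{L^2(C_i)^{d\times d}}$.

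Define $\phi := v - \sum_i w_i$. Then $\phi \in H^1(\Omega \setminus C_0)^d$, $\phi|_{\GammaD} = 0$, and $\phi|_{C_i} = v - (v - r_i) = r_i$, so $\widehat{\nabla}\phi = 0$ in $C_\infty^\circ$; hence $\phi \in \H_{C_0}^{C_\infty}$. Using the equivalence of $\norm{\,\cdot\,}_{\lambda,\mu}$ and the usual $H^1(\Omega\setminus C_0)^d$-norm on $\H_{C_0}^{\emptyset}$ together with the disjointness of the supports of the $w_i$,
\begin{equation*}
	\norm{P^\perp v}_{\lambda,\mu}^2 \leq \norm{v - \phi}_{\lambda,\mu}^2 \leq K \sum_i \norm{w_i}_{H^1(\Omega \setminus C_0)^d}^2 \leq K \sum_i \norm{\widehat{\nabla} v}_{L^2(C_i)^{d\times d}}^2 = K \norm{\widehat{\nabla} v}_{L^2(C_\infty)^{d\times d}}^2.
\end{equation*}
The main delicacy is keeping the constant independent of $v$, which is automatic since the Korn constants, the extension operators $\mathcal{E}_i$, and the cutoffs $\chi_i$ are all fixed by the geometry and the background parameters. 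I note that the PDE hypothesis $\nabla\cdot\bigl(\lambda(\nabla\cdot v)I+2\mu\widehat{\nabla}v\bigr)=0$ in $\Omega\setminus C$ does not seem essential for this upper bound via the Korn/extension route above; if one wishes to exploit it, an alternative is to integrate by parts in $\inner{v,P^\perp v}_{\lambda,\mu}$ over $\Omega\setminus C$, reducing the exterior contribution to a boundary term on $\partial C_\infty$ which one then controls by trace inequalities and Korn.
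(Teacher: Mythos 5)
Your proof is correct, and it reaches the bound by a genuinely different construction of the competitor than the paper does. Both arguments start from the same variational characterization $\norm{P^\perp v}_{\lambda,\mu}=\inf_{\widehat{v}\in\H_{C_0}^{C_\infty}}\norm{v-\widehat{v}}_{\lambda,\mu}$, but the paper builds its competitor $w$ by solving an auxiliary elliptic boundary value problem in $\Omega\setminus C$ with Dirichlet data $v$ on $\partial(\Omega\setminus C_0)$ and a piecewise constant value (the componentwise mean $v^{C_\infty}$) on $C_\infty$, and then estimates $v-w$ via continuous dependence on the Dirichlet data, the trace theorem, and Korn's inequality; this is where the hypothesis that $v$ solves the PDE in $\Omega\setminus C$ enters. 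You instead build the competitor by hand, gluing the Korn-optimal rigid motion $r_i$ on each component $C_i$ to $v$ via Sobolev extension and cutoffs with pairwise disjoint supports, which buys two things: the PDE hypothesis on $v$ becomes superfluous (your observation on this point is accurate), and no auxiliary solvability or continuous-dependence result is invoked. A further point in your favor is the choice of $r_i$: subtracting only the mean value, as the paper's $v^{C_\infty}$ does, does not by itself yield $\norm{v-v^{C_\infty}}_{H^1(C_i^\circ)^d}\leq K\norm{\widehat{\nabla}v}_{L^2(C_i)^{d\times d}}$ (an infinitesimal rotation about the centroid has zero mean and vanishing symmetric gradient but is nonzero), so the paper's displayed chain of inequalities implicitly requires replacing the mean by the full rigid-motion projection, exactly as you do; since rigid motions are admissible in $\H_{C_0}^{C_\infty}$, this substitution is harmless in both arguments. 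The only hypotheses you should make explicit are that $C_\infty$, being compact with Lipschitz boundary, has finitely many components at positive mutual distance and positive distance from $C_0$ and $\partial\Omega$, which is what makes the cutoffs and the uniformity of the Korn and extension constants legitimate; this is guaranteed by Assumption~\ref{assump}.
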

\begin{proof}
	Let $v^{C_\infty}$ denote the piecewise constant function in $H^1(C_\infty^\circ)^d$ that in each component $C_i$ of $C_\infty$ equals the mean value of the restriction $v|_{C_i}$. Let $w$ be the unique solution in $\H_{C_0}^{C_\infty}$ of the auxiliary problem
	\begin{equation} 
		\begin{split}
			\nabla\cdot \bigl(\lambda(\nabla\cdot w)I+2\mu\widehat{\nabla}w\bigr) &=0 \text{ in } \Omega\setminus C, \\
			w &= v \text{ on } \partial(\Omega\setminus C_0), \\
			w &= v^{C_\infty} \text{ in } C_\infty^\circ.
		\end{split}
	\end{equation}
	Clearly, $v-w$ satisfies a corresponding boundary value problem in $\Omega\setminus C$, with a Dirichlet condition that is only non-trivial on $\partial C_\infty$. By continuous dependence on the Dirichlet data, the trace theorem in $C_\infty^\circ$, and applying  Poincare's and Korn's inequalities on each component $C_i$, we obtain the bound
	\begin{align*}
		\norm{v-w}_{H^1(\Omega\setminus C_0)^d}^2 &= \norm{v-w}_{H^1(\Omega\setminus C)^d}^2 + \norm{v-v^{C_\infty}}_{H^1(C_\infty^\circ)^d}^2 \\
		&\leq K\bigl(\norm{v-v^{C_\infty}}_{H^{1/2}(\partial C_\infty)^d}^2 + \norm{v-v^{C_\infty}}_{H^1(C_\infty^\circ)^d}^2 \bigr) \\
		&\leq K\norm{v-v^{C_\infty}}_{H^1(C_\infty^\circ)^d}^2 \\
		&\leq K\norm{\widehat{\nabla}v}_{L^2(C_\infty)^{d\times d}}^2.
	\end{align*}
	Finally, the projection theorem and norm equivalences give
	\begin{align*}
		\norm{P^\perp v}_{\lambda,\mu} &= \inf_{\widehat{v}\in\H_{C_0}^{C_\infty}}\norm{v-\widehat{v}}_{\lambda,\mu} \leq \norm{v-w}_{\lambda,\mu} \\
		&\leq K\norm{v-w}_{H^1(\Omega\setminus C_0)^d} \leq K \norm{\widehat{\nabla}v}_{L^2(C_\infty)^{d\times d}}. \qedhere
	\end{align*}
\end{proof}

\section{Proof of Theorem~\ref{thm:convergence}} \label{sec:convproof}

Let $u_\epsilon = u_g^{\lambda_\epsilon,\mu_\epsilon} \in \H_{\emptyset}^{\emptyset}$ and $u = u_g^{\lambda,\mu} \in \H_{C_0}^{C_\infty}$ be the solutions to \eqref{eq:weakform} with the corresponding Lam\'e parameters. To more easily distinguish between the weak formulations of $u_\epsilon$ and $u$, we write down \eqref{eq:weakform} specifically for $u_\epsilon$:
\begin{equation}
	\int_\Omega \lambda_\epsilon(\nabla\cdot u_\epsilon)(\nabla\cdot v) + 2\mu_\epsilon\widehat{\nabla}u_\epsilon:\widehat{\nabla}v \, \di x = \inner{g,v} \quad \text{for all } v \in\H_{\emptyset}^{\emptyset}.
	\label{eq:weakformeps}
\end{equation}

First, we give some estimates for  $u_\epsilon$ in terms of $g$. Using \eqref{eq:weakformeps}, the ``Cauchy inequality with $\varepsilon$" (with $\alpha >0$ taking the role of $\varepsilon$), and the trace theorem in $\Omega\setminus C$, leads to
\begin{align*}
	2\inf(\mu_0)\bigl( \norm{\widehat{\nabla}u_\epsilon}_{L^2(\Omega\setminus C)^{d\times d}}^2 + \epsilon\norm{\widehat{\nabla}u_\epsilon}_{L^2(C_0)^{d\times d}}^2 + \epsilon^{-1}\norm{\widehat{\nabla}u_\epsilon}_{L^2(C_\infty)^{d\times d}}^2 \bigr) \hspace{-5cm}&\\
	&\leq
	\int_{\Omega} \lambda_\epsilon \abs{\nabla\cdot u_\epsilon}^2 + 2\mu_\epsilon \frob{\widehat{\nabla}u_\epsilon}^2 \,\di x \\
	&= \inner{g,u_\epsilon} \\ 
	&\leq \alpha\norm{g}^2 + \frac{1}{4\alpha}\norm{u_\epsilon}^2_{H^{1/2}(\partial(\Omega\setminus C))^d} \\
	&\leq \alpha\norm{g}^2 + \frac{\widehat{K}}{4\alpha} \norm{\widehat{\nabla}u_\epsilon}_{L^2(\Omega\setminus C)^{d\times d}}^2.
\end{align*}
Choosing $\alpha=\frac{\widehat{K}}{8\inf(\mu_0)}$ and restructuring results in 
\begin{align*}
	\epsilon\norm{\widehat{\nabla}u_\epsilon}_{L^2(C_0)^{d\times d}}^2 + \epsilon^{-1}\norm{\widehat{\nabla}u_\epsilon}_{L^2(C_\infty)^{d\times d}}^2 \leq \frac{\widehat{K}}{16\inf(\mu_0)^2}\norm{g}^2.
\end{align*}
Bounding the left-hand side from below, and absorbing all constants in $K$, leads to the following estimates on $C_\infty$ and $C_0$:
\begin{align}
	\norm{\widehat{\nabla}u_\epsilon}_{L^2(C_\infty)^{d\times d}} &\leq K\epsilon^{1/2}\norm{g}, \label{eq:convbnd1}\\
	\norm{\widehat{\nabla}u_\epsilon}_{L^2(C_0)^{d\times d}} &\leq K\epsilon^{-1/2}\norm{g}. \label{eq:convbnd2}
\end{align}
We will return to these bounds later.

We proceed to prove the first part of the theorem. In the following computations we consider $u_\epsilon|_{\Omega\setminus C_0}\in\H_{C_0}^{\emptyset}$. By the Cauchy--Schwarz inequality,
\begin{align}
	\norm{u-u_\epsilon}_{\lambda,\mu}^2 &=  \norm{P(u-u_\epsilon)}_{\lambda,\mu}^2 + \norm{P^\perp(u-u_\epsilon)}_{\lambda,\mu}^2 \notag \\  
	&\leq \inner{u-u_\epsilon,P(u-u_\epsilon)}_{\lambda,\mu}  + \norm{P^\perp u_\epsilon}_{\lambda,\mu} \norm{u-u_\epsilon}_{\lambda,\mu}. \label{eq:est_u} 
\end{align} 
As Lemma~\ref{lemma:perp} and \eqref{eq:convbnd1} yield
\begin{equation} \label{eq:firsttermbnd}
	\norm{P^\perp u_\epsilon}_{\lambda,\mu}\norm{u-u_\epsilon}_{\lambda,\mu} \leq K\epsilon^{1/2}\norm{g}\norm{u-u_\epsilon}_{\lambda,\mu},
\end{equation}
we just need a similar type of bound for the first term on the right-hand side of \eqref{eq:est_u}.

From the weak formulations \eqref{eq:weakform} and \eqref{eq:weakformeps} for $u$ and $u_\epsilon$, respectively, we get
\begin{align}
	\inner{u-u_\epsilon,P(u-u_\epsilon)}_{\lambda,\mu} &= \inner{u,P(u-u_\epsilon)}_{\lambda,\mu} - \inner{u_\epsilon,P(u-u_\epsilon)}_{\lambda,\mu} \notag \\
	&= \epsilon \int_{C_0} \lambda_0(\nabla\cdot u_\epsilon)(\nabla\cdot w) + 2\mu_0 \widehat{\nabla} u_\epsilon : \widehat{\nabla} w \,\di x, 
	\label{eq:firstterm}
\end{align} 
where $w$ is an $H^1$ extension of $P(u-u_\epsilon)$ onto $C_0$. This only gives a contribution on $C_0$, as $\widehat{\nabla}P(u-u_\epsilon) = 0$ in $C^\circ_\infty$ and therefore also $\nabla\cdot P(u-u_\epsilon) = 0$ in $C^\circ_\infty$ by \eqref{eq:nablabnd}.

We adopt the notation that ``int'' refers to taking a trace from within $C_0^\circ$.
Since $u_\epsilon$ satisfies
\begin{equation} \label{eq:pdeinC0}
	\nabla\cdot \bigl(\lambda_0(\nabla\cdot u_\epsilon)I+2\mu_0\widehat{\nabla}u_\epsilon\bigr) = \nabla\cdot \bigl(\lambda_\epsilon(\nabla\cdot u_\epsilon)I+2\mu_\epsilon\widehat{\nabla}u_\epsilon\bigr) = 0 \text{ in } C_0^\circ,
\end{equation}
we may rewrite \eqref{eq:firstterm} as
\begin{equation}
	\inner{u-u_\epsilon,P(u-u_\epsilon)}_{\lambda,\mu} =  \epsilon \inner{(\lambda_0(\nabla\cdot u_\epsilon)I+2\mu_0\widehat{\nabla}u_\epsilon)_{\text{int}}n, w}_{\partial C_0},
	\label{eq:firstterm2}
\end{equation}
where $\inner{\,\cdot\,,\,\cdot\,}_{\partial C_0} = \inner{\,\cdot\,,\,\cdot\,}_{H^{-1/2}(\partial C_0)^d,H^{1/2}(\partial C_0)^d}$ is the corresponding dual bracket on $\partial C_0$. Now we can straightforwardly estimate \eqref{eq:firstterm2} using the boundedness of the dual bracket, the trace theorem in $\Omega\setminus C_0$, and the boundedness of $P$:
\begin{align}
	\inner{u-u_\epsilon,P(u-u_\epsilon)}_{\lambda,\mu} &\leq \epsilon\norm{(\lambda_0(\nabla\cdot u_\epsilon)I+2\mu_0\widehat{\nabla}u_\epsilon)_{\text{int}}n}_{H^{-1/2}(\partial C_0)^d}\norm{w}_{H^{1/2}(\partial C_0)^d} \notag\\
	&\leq K\epsilon\norm{(\lambda_0(\nabla\cdot u_\epsilon)I+2\mu_0\widehat{\nabla}u_\epsilon)_{\text{int}}n}_{H^{-1/2}(\partial C_0)^d}\norm{u-u_\epsilon}_{\lambda,\mu}. 
	\label{eq:firstterm3}
\end{align}
Since taking the trace of a normal component is bounded from $H_{\textup{div}}(C_0^\circ;\R^{d\times d})$ to $H^{-1/2}(\partial C_0)^d$, \eqref{eq:pdeinC0}, \eqref{eq:nablabnd}, and \eqref{eq:convbnd2} imply
\begin{align}
	\epsilon\norm{(\lambda_0(\nabla\cdot u_\epsilon)I+2\mu_0\widehat{\nabla}u_\epsilon)_{\text{int}}n}_{H^{-1/2}(\partial C_0)^d} \hspace{-5cm}& \notag\\
	&\leq K\epsilon \bigl(\norm{\lambda_0(\nabla\cdot u_\epsilon)I+2\mu_0\widehat{\nabla}u_\epsilon}_{L^2(C_0)^{d\times d}}^2 + \norm{\nabla\cdot(\lambda_0(\nabla\cdot u_\epsilon)I+2\mu_0\widehat{\nabla}u_\epsilon)}_{L^2(C_0)^{d\times d}}^2 \bigr)^{1/2} \notag\\
	&= K\epsilon \norm{\lambda_0(\nabla\cdot u_\epsilon)I+2\mu_0\widehat{\nabla}u_\epsilon}_{L^2(C_0)^{d\times d}} \notag\\
	&\leq K\epsilon \norm{\widehat{\nabla}u_\epsilon}_{L^2(C_0)^{d\times d}} \notag \\
	&\leq K\epsilon^{1/2}\norm{g}. \label{eq:firstterm4}
\end{align}
Combining \eqref{eq:firstterm3} and \eqref{eq:firstterm4} handles the first term on the right-hand side of \eqref{eq:est_u}. Using equivalences of norms, we have altogether proven that
\begin{equation} \label{eq:finalbnd1}
	\norm{u-u_\epsilon}_{H^1(\Omega\setminus C_0)^d} \leq K\epsilon^{1/2}\norm{g}.
\end{equation}

Recall Definition~\ref{def:extension} for the extension $Eu$. Because of \eqref{eq:pdeinC0}, the difference $v = (Eu-u_\epsilon)|_{C_0}$ satisfies the Dirichlet problem
\begin{align*}
	\nabla\cdot\bigl(\lambda_0(\nabla\cdot v)I+2\mu_0\widehat{\nabla}v\bigr) &= 0 \text{ in } C_0^\circ, \\
	v &= u-u_\epsilon \text{ on } \partial C_0.
\end{align*}
Hence, by continuous dependence on the Dirichlet boundary data, the trace theorem in $\Omega\setminus C_0$, and \eqref{eq:finalbnd1}, we get
\begin{equation*}
	\norm{Eu-u_\epsilon}_{H^1(C_0^\circ)^d} \leq K\norm{u-u_\epsilon}_{H^{1/2}(\partial C_0)^d} \leq K\norm{u-u_\epsilon}_{H^1(\Omega\setminus C_0)^d} \leq K\epsilon^{1/2}\norm{g}.
\end{equation*}
Combined with \eqref{eq:finalbnd1}, this proves the first part of the theorem.

The second part of the theorem follows from the trace theorem in $\Omega\setminus C_0$ and \eqref{eq:finalbnd1}:
\begin{equation*}
	\norm{\Lambda(\lambda,\mu)-\Lambda(\lambda_\epsilon,\mu_\epsilon)}_{\mathscr{L}(L^2(\GammaN)^d)} = \sup_{\norm{g}=1}\norm{u-u_\epsilon} \leq K\sup_{\norm{g}=1}\norm{u-u_\epsilon}_{H^1(\Omega\setminus C_0)^d} \leq K\epsilon^{1/2}.
\end{equation*}

\section{Extreme monotonicity inequalities} \label{sec:monoineq}

In this section, we assume $C_0$ and $C_\infty$ (possibly empty sets), with $C = C_0\cup C_\infty$, satisfy Assumption~\ref{assump}, and let $g\in L^2(\GammaN)^d$. 

First, we give natural generalizations of the non-extreme monotonicity inequalities presented in~\cite{Eberle2021,Eberle2021a}.
\begin{lemma} \label{lemma:monoback}
	For $j\in\{1,2\}$, let $\lambda_j,\mu_j\in L_+^\infty(\Omega)$ and 
	\begin{equation*} 
		\widehat{\lambda}_j = \begin{cases}
			\lambda_j & \text{in } \Omega\setminus C\\
			0 & \text{in } C_0\\
			\infty & \text{in } C_\infty
		\end{cases}
		\quad \text{and}
		\quad
		\widehat{\mu}_j = \begin{cases}
			\mu_j & \text{in } \Omega\setminus C\\
			0 & \text{in } C_0\\
			\infty & \text{in } C_\infty.
		\end{cases}
	\end{equation*}
	Then for $\Lambda_j = \Lambda(\widehat{\lambda}_j,\widehat{\mu}_j)$ and $u_j = u_g^{\widehat{\lambda}_j,\widehat{\mu}_j}$, we have
	\begin{align}
		\inner{(\Lambda_2-\Lambda_1)g,g} &\leq \int_{\Omega\setminus C} (\lambda_1-\lambda_2)\abs{\nabla\cdot u_2}^2 + 2(\mu_1-\mu_2)\frob{\widehat{\nabla}u_2}^2\,\di x, \label{eq:monoback1}\\  
		\inner{(\Lambda_2-\Lambda_1)g,g} &\geq \int_{\Omega\setminus C} \frac{\lambda_2}{\lambda_1}(\lambda_1-\lambda_2)\abs{\nabla\cdot u_2}^2 + 2\frac{\mu_2}{\mu_1}(\mu_1-\mu_2)\frob{\widehat{\nabla}u_2}^2 \, \di x. \label{eq:monoback2}
	\end{align}
\end{lemma}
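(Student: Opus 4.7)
The plan is to adapt the classical Ikehata/Kang-type monotonicity inequalities from \cite{Eberle2021,Eberle2021a} to the present setting. The extreme inclusions enter only as a book-keeping issue: since $u_1,u_2\in\H_{C_0}^{C_\infty}$, one has $\widehat{\nabla}u_j=0$ in $C_\infty^\circ$ by definition, and hence also $\nabla\cdot u_j=0$ there by \eqref{eq:nablabnd}. Consequently, the integrand of $\inner{\,\cdot\,,\,\cdot\,}_{\widehat{\lambda}_j,\widehat{\mu}_j}$ vanishes on $C_\infty$, and each such inner product reduces to the natural $\lambda_j$- and $\mu_j$-weighted bilinear expression integrated over $\Omega\setminus C$; the formal values $0$ and $\infty$ of $\widehat{\lambda}_j,\widehat{\mu}_j$ on $C$ are never invoked directly. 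The only identities I need are those obtained from the weak form \eqref{eq:weakform} by testing with $u_1$ and $u_2$: $\inner{u_j,u_j}_{\widehat{\lambda}_j,\widehat{\mu}_j}=\inner{\Lambda_j g,g}$ and $\inner{u_j,u_{3-j}}_{\widehat{\lambda}_j,\widehat{\mu}_j}=\inner{g,u_{3-j}}=\inner{\Lambda_{3-j}\,g,g}$.

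For \eqref{eq:monoback1}, I would expand the elementary inequality
\begin{equation*}
  0\le \inner{u_1-u_2,u_1-u_2}_{\widehat{\lambda}_1,\widehat{\mu}_1}
    = \inner{\Lambda_1 g,g} - 2\inner{\Lambda_2 g,g} + \int_{\Omega\setminus C}\lambda_1\abs{\nabla\cdot u_2}^2 + 2\mu_1\frob{\widehat{\nabla}u_2}^2\,\di x,
\end{equation*}
and then subtract $\inner{\Lambda_2 g,g}=\int_{\Omega\setminus C}\lambda_2\abs{\nabla\cdot u_2}^2+2\mu_2\frob{\widehat{\nabla}u_2}^2\,\di x$ from both sides; this leaves $\inner{\Lambda_2g,g}-\inner{\Lambda_1g,g}$ on one side and exactly the claimed right-hand side of \eqref{eq:monoback1} on the other.

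For \eqref{eq:monoback2}, I would instead expand the weighted nonnegative quadratic
\begin{equation*}
  0\le \int_{\Omega\setminus C}\lambda_1\abs{\nabla\cdot u_1 - \tfrac{\lambda_2}{\lambda_1}\nabla\cdot u_2}^2 + 2\mu_1\frob{\widehat{\nabla}u_1 - \tfrac{\mu_2}{\mu_1}\widehat{\nabla}u_2}^2\,\di x,
\end{equation*}
which is legitimate since $\lambda_j,\mu_j\in L^\infty_+(\Omega)$ makes the ratios pointwise bounded. Upon expansion, the diagonal term contributes $\inner{u_1,u_1}_{\widehat{\lambda}_1,\widehat{\mu}_1}=\inner{\Lambda_1 g,g}$, while the cross term collapses, via the $j=2$ form of the second identity above, to $-2\inner{u_2,u_1}_{\widehat{\lambda}_2,\widehat{\mu}_2}=-2\inner{\Lambda_1 g,g}$; after cancellation the inequality rearranges to the \emph{a priori} bound
\begin{equation*}
  \inner{\Lambda_1 g,g} \le \int_{\Omega\setminus C}\frac{\lambda_2^2}{\lambda_1}\abs{\nabla\cdot u_2}^2 + 2\frac{\mu_2^2}{\mu_1}\frob{\widehat{\nabla}u_2}^2\,\di x.
\end{equation*}
Subtracting this from $\inner{\Lambda_2 g,g}$ written in the same $u_2$-variables and factoring $\lambda_2/\lambda_1$ and $\mu_2/\mu_1$ produces exactly \eqref{eq:monoback2}. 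I do not anticipate any genuine obstacle; the argument is a direct vector-valued translation of the scalar (EIT) version, modulo the initial observation that everything effectively lives on $\Omega\setminus C$ despite $\widehat{\lambda}_j,\widehat{\mu}_j$ being extreme on $C$.
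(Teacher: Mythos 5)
Your proposal is correct and follows essentially the same route as the paper: \eqref{eq:monoback1} comes from expanding $\norm{u_1-u_2}_{\widehat{\lambda}_1,\widehat{\mu}_1}^2\geq 0$ together with the weak-form identities $\inner{u_j,u_{3-j}}_{\widehat{\lambda}_j,\widehat{\mu}_j}=\inner{\Lambda_{3-j}g,g}$, and \eqref{eq:monoback2} from the same ratio-weighted completion of the square $\lambda_1\abs{\nabla\cdot u_1-\tfrac{\lambda_2}{\lambda_1}\nabla\cdot u_2}^2+2\mu_1\frob{\widehat{\nabla}u_1-\tfrac{\mu_2}{\mu_1}\widehat{\nabla}u_2}^2\geq 0$ that the paper uses (the paper merely reaches it via the index-swapped identity rather than by expanding the nonnegative quadratic directly). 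Your preliminary observation that all inner products reduce to integrals over $\Omega\setminus C$, since $\widehat{\nabla}u_j=0$ and hence $\nabla\cdot u_j=0$ on $C_\infty^\circ$ by \eqref{eq:nablabnd}, is exactly the remark the paper makes at the start of its proof.
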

\begin{proof}
	Throughout this proof, we use \eqref{eq:nablabnd} to remove contributions from divergence terms on $C_\infty$ where the symmetric gradients of the displacement fields vanish.
	
	Using the weak forms, we have
	\begin{equation*}
		\inner{u_1,u_2}_{\widehat{\lambda}_1,\widehat{\mu}_1} = \inner{\Lambda_2 g,g} = \norm{u_2}_{\widehat{\lambda}_2,\widehat{\mu}_2}^2.
	\end{equation*}
	Thus
	\begin{align}
		0 &\leq \norm{u_1-u_2}_{\widehat{\lambda}_1,\widehat{\mu}_1}^2 \notag\\
		&= \norm{u_1}_{\widehat{\lambda}_1,\widehat{\mu}_1}^2 + \norm{u_2}_{\widehat{\lambda}_1,\widehat{\mu}_1}^2 - 2\inner{u_1,u_2}_{\widehat{\lambda}_1,\widehat{\mu}_1} \notag\\
		&=\inner{\Lambda_1 g,g} - \inner{\Lambda_2 g,g} + \norm{u_2}_{\widehat{\lambda}_1,\widehat{\mu}_1}^2-\norm{u_2}_{\widehat{\lambda}_2,\widehat{\mu}_2}^2, \label{eq:monoback1comp}
	\end{align}
	which gives \eqref{eq:monoback1}, that is,
	\begin{equation*}
		\inner{(\Lambda_2-\Lambda_1)g,g} \leq \norm{u_2}_{\widehat{\lambda}_1,\widehat{\mu}_1}^2-\norm{u_2}_{\widehat{\lambda}_2,\widehat{\mu}_2}^2.
	\end{equation*}
	Next we show \eqref{eq:monoback2}. By swapping the indices in \eqref{eq:monoback1comp}, 
	\begin{align*}
		\inner{(\Lambda_2-\Lambda_1)g,g} &= \norm{u_2-u_1}_{\widehat{\lambda}_2,\widehat{\mu}_2}^2  +\norm{u_1}_{\widehat{\lambda}_1,\widehat{\mu}_1}^2-\norm{u_1}_{\widehat{\lambda}_2,\widehat{\mu}_2}^2 \\
		&= \int_{\Omega\setminus C} \lambda_2\abs{\nabla\cdot u_2}^2+\lambda_1\abs{\nabla\cdot u_1}^2-2\lambda_2(\nabla\cdot u_2)(\nabla \cdot u_1)\,\di x \\
		&\phantom{={}}+\int_{\Omega\setminus C} 2\mu_2\frob{\widehat{\nabla}u_2}^2+2\mu_1\frob{\widehat{\nabla}u_1}^2-4\mu_2\widehat{\nabla}u_2:\widehat{\nabla}u_1 \,\di x \\
		&=\int_{\Omega\setminus C} \frac{\lambda_2}{\lambda_1}(\lambda_1-\lambda_2)\abs{\nabla\cdot u_2}^2 + 2\frac{\mu_2}{\mu_1}(\mu_1-\mu_2)\frob{\widehat{\nabla}u_2}^2   \,\di x \\
		&\phantom{={}} +\int_{\Omega\setminus C} \lambda_1\bigl\vert\nabla\cdot u_1 - \frac{\lambda_2}{\lambda_1}\nabla\cdot u_2\bigr\vert^2 + 2\mu_1\bigl\Vert\widehat{\nabla}u_1 - \frac{\mu_2}{\mu_1}\widehat{\nabla}u_2\bigr\Vert_{\textup{F}}^2\,\di x \\
		&\geq \int_{\Omega\setminus C} \frac{\lambda_2}{\lambda_1}(\lambda_1-\lambda_2)\abs{\nabla\cdot u_2}^2 + 2\frac{\mu_2}{\mu_1}(\mu_1-\mu_2)\frob{\widehat{\nabla}u_2}^2   \,\di x. \qedhere
	\end{align*}
\end{proof}
In the following two lemmas, consider the Lam\'e parameters $\lambda$ and $\mu$ from \eqref{eq:extremeLame}, with background parameters $\lambda_0,\mu_0\in L^\infty_+(\Omega)$ and extreme inclusions in $C_0$ and $C_\infty$.  We use the notation $\Lambda_{C_0}^{C_\infty}$ for the ND map and set $u_{C_0}^{C_\infty} = u_g^{\lambda,\mu}$.
\begin{lemma} \label{lemma:monoCinfty}
	There is a constant $K>0$ (independent of $g$) such that
	\begin{align} 
		\inner{(\Lambda_{C_0}^{\emptyset}-\Lambda_{C_0}^{C_\infty})g,g} &\leq K\int_{C_\infty} \frob{\widehat{\nabla}u_{C_0}^{\emptyset}}^2\,\di x, \label{eq:monoCinfty1} \\
		\inner{(\Lambda_{C_0}^{\emptyset}-\Lambda_{C_0}^{C_\infty})g,g} & \geq \int_{C_\infty}\lambda_0\abs{\nabla\cdot u_{C_0}^{\emptyset}}^2 +2\mu_0\frob{\widehat{\nabla}u_{C_0}^{\emptyset}}^2\,\di x. \label{eq:monoCinfty2} 
	\end{align}
\end{lemma}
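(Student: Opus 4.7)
The plan is to show that $u_{C_0}^{C_\infty}$ equals the $\inner{\,\cdot\,,\,\cdot\,}_{\lambda,\mu}$-orthogonal projection of $u_{C_0}^{\emptyset}$ onto $\H_{C_0}^{C_\infty}$, then express the power difference via the Pythagorean theorem, and finally bound the projected residual above by Lemma~\ref{lemma:perp} and below by restricting the defining integral to $C_\infty$.

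First, write down the weak formulations. Since $\H_{C_0}^{C_\infty} \subset \H_{C_0}^{\emptyset}$, both $u_{C_0}^{\emptyset}$ and $u_{C_0}^{C_\infty}$ satisfy $\inner{\,\cdot\,,v}_{\lambda,\mu} = \inner{g,v}$ for every $v \in \H_{C_0}^{C_\infty}$. Subtracting these gives $\inner{u_{C_0}^{\emptyset} - u_{C_0}^{C_\infty}, v}_{\lambda,\mu} = 0$ for all $v \in \H_{C_0}^{C_\infty}$. Hence $u_{C_0}^{\emptyset} - u_{C_0}^{C_\infty} \in (\H_{C_0}^{C_\infty})^{\perp}$, so $P u_{C_0}^{\emptyset} = u_{C_0}^{C_\infty}$ and $P^\perp u_{C_0}^{\emptyset} = u_{C_0}^{\emptyset} - u_{C_0}^{C_\infty}$. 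Combined with the power identity $\inner{\Lambda g,g} = \norm{u_g^{\lambda,\mu}}_{\lambda,\mu}^2$ and Pythagoras, this yields the central equality
\begin{equation*}
    \inner{(\Lambda_{C_0}^{\emptyset} - \Lambda_{C_0}^{C_\infty})g,g} = \norm{u_{C_0}^{\emptyset}}_{\lambda,\mu}^2 - \norm{u_{C_0}^{C_\infty}}_{\lambda,\mu}^2 = \norm{P^\perp u_{C_0}^{\emptyset}}_{\lambda,\mu}^2.
\end{equation*}

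For the upper bound \eqref{eq:monoCinfty1}, I apply Lemma~\ref{lemma:perp} to $u_{C_0}^{\emptyset}$. The hypothesis holds because $\lambda = \lambda_0$ and $\mu = \mu_0$ on $\Omega\setminus C$, while $u_{C_0}^{\emptyset}$ already solves the corresponding PDE throughout $\Omega\setminus C_0 \supset \Omega\setminus C$. For the lower bound \eqref{eq:monoCinfty2}, expand $\norm{P^\perp u_{C_0}^{\emptyset}}_{\lambda,\mu}^2 = \norm{u_{C_0}^{\emptyset} - u_{C_0}^{C_\infty}}_{\lambda,\mu}^2$ as an integral over $\Omega\setminus C_0$ and restrict the domain of integration to $C_\infty$. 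Since $u_{C_0}^{C_\infty}\in\H_{C_0}^{C_\infty}$ has $\widehat{\nabla} u_{C_0}^{C_\infty} = 0$ in $C_\infty^\circ$ and hence, by \eqref{eq:nablabnd}, $\nabla\cdot u_{C_0}^{C_\infty} = 0$ there as well, the restriction reduces to exactly the right-hand side of \eqref{eq:monoCinfty2}.

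The main conceptual step is the identification $P u_{C_0}^{\emptyset} = u_{C_0}^{C_\infty}$, which is a cleaner analog of Lemma~\ref{lemma:altcharacterization}: here no auxiliary function $w$ is needed because the two problems share the same $C_0$, so the extension of the domain from $\H_{C_0}^{C_\infty}$ to $\H_{C_0}^{\emptyset}$ (the only difference between the two problems) is captured purely by the orthogonal projection. Once this is recognized, the two bounds follow by, respectively, an application of Lemma~\ref{lemma:perp} and an elementary restriction of the integral, with no further calculation beyond routine norm manipulations.
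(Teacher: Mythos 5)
Your proposal is correct. For the identity $\inner{(\Lambda_{C_0}^{\emptyset}-\Lambda_{C_0}^{C_\infty})g,g} = \norm{P^\perp u_{C_0}^{\emptyset}}_{\lambda,\mu}^2$ and the upper bound \eqref{eq:monoCinfty1} you follow essentially the paper's route: the paper invokes Lemma~\ref{lemma:altcharacterization} to get $u_{C_0}^{C_\infty}=Pu_{C_0}^{\emptyset}$ and then applies Lemma~\ref{lemma:perp}, whereas you derive the projection identity directly from the two weak formulations (a clean Galerkin-orthogonality argument that is indeed simpler here, since both problems share the same $C_0$ and hence no auxiliary $w$ is needed); the subsequent application of Lemma~\ref{lemma:perp} is identical. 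Where you genuinely diverge is the lower bound \eqref{eq:monoCinfty2}: the paper truncates the Lam\'e parameters to $\epsilon^{-1}\lambda_0,\epsilon^{-1}\mu_0$ on $C_\infty$, applies the non-extreme inequality \eqref{eq:monoback2} of Lemma~\ref{lemma:monoback}, and passes to the limit $\epsilon\to0$ via Theorem~\ref{thm:convergence}. You instead expand $\norm{P^\perp u_{C_0}^{\emptyset}}_{\lambda,\mu}^2=\norm{u_{C_0}^{\emptyset}-u_{C_0}^{C_\infty}}_{\lambda,\mu}^2$, drop the nonnegative contribution outside $C_\infty$, and use that $\widehat{\nabla}u_{C_0}^{C_\infty}=0$ (hence also $\nabla\cdot u_{C_0}^{C_\infty}=0$ by \eqref{eq:nablabnd}) on $C_\infty^\circ$ so that the remaining integrand reduces exactly to the right-hand side of \eqref{eq:monoCinfty2}. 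This is valid and more elementary: it avoids the approximation machinery of Theorem~\ref{thm:convergence} entirely and stays within the exact extreme problem, at the cost of not illustrating the general truncation-and-limit technique that the paper reuses for Lemma~\ref{lemma:monoC0} and elsewhere.
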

\begin{proof}
	We start by proving \eqref{eq:monoCinfty1}, for which we first note that $u_{C_0}^{C_\infty} = Pu_{C_0}^{\emptyset}$ by Lemma~\ref{lemma:altcharacterization}. Hence, $P^\perp u_{C_0}^{\emptyset} = u_{C_0}^{\emptyset} - u_{C_0}^{C_\infty}$, for which the weak formulation of $u_{C_0}^{\emptyset}$ gives:
	\begin{equation*}
		\inner{(\Lambda_{C_0}^{\emptyset}-\Lambda_{C_0}^{C_\infty})g,g} = \inner{g,P^\perp u_{C_0}^{\emptyset}} = \inner{u_{C_0}^{\emptyset},P^\perp u_{C_0}^{\emptyset}}_{\lambda,\mu} = \norm{P^\perp u_{C_0}^{\emptyset}}_{\lambda,\mu}^2.
	\end{equation*}
	Now the bound in \eqref{eq:monoCinfty1} follows from Lemma~\ref{lemma:perp}.
	
	We proceed to prove \eqref{eq:monoCinfty2}. Define the following truncated (in $C_\infty$) Lam\'e parameters:
	\begin{equation*} 
		\lambda_\epsilon = \begin{cases}
			\lambda_0 & \text{in } \Omega\setminus C\\
			0 & \text{in } C_0\\
			\epsilon^{-1}\lambda_0 & \text{in } C_\infty
		\end{cases}
		\quad \text{and}
		\quad
		\mu_\epsilon = \begin{cases}
			\mu_0 & \text{in } \Omega\setminus C\\
			0 & \text{in } C_0\\
			\epsilon^{-1}\mu_0 & \text{in } C_\infty.
		\end{cases}
	\end{equation*}
	From \eqref{eq:monoback2} in Lemma~\ref{lemma:monoback} (where in this context $\Omega\setminus C$ is replaced by $\Omega\setminus C_0$), we have
	\begin{equation*}
		\inner{(\Lambda_{C_0}^{\emptyset} - \Lambda(\lambda_\epsilon,\mu_\epsilon))g,g} \geq (1-\epsilon)\int_{C_\infty}\lambda_0\abs{\nabla\cdot u_{C_0}^{\emptyset}}^2 +2\mu_0\frob{\widehat{\nabla}u_{C_0}^{\emptyset}}^2\,\di x.
	\end{equation*}
Due to Theorem~\ref{thm:convergence}, the bound in \eqref{eq:monoCinfty2} is obtained by letting $\epsilon\to 0$.
\end{proof}
\begin{lemma} \label{lemma:monoC0}
	We have the bounds
	\begin{align} 
		\inner{(\Lambda_{C_0}^{C_\infty}-\Lambda_{\emptyset}^{C_\infty})g,g} &\leq \int_{C_0} \lambda_0\abs{\nabla\cdot Eu_{C_0}^{C_\infty}}^2 + 2\mu_0\frob{\widehat{\nabla}Eu_{C_0}^{C_\infty}}^2\,\di x, \label{eq:monoC01} \\
		\inner{(\Lambda_{C_0}^{C_\infty}-\Lambda_{\emptyset}^{C_\infty})g,g} &\geq \int_{C_0}\lambda_0\abs{\nabla\cdot u_{\emptyset}^{C_\infty}}^2 +2\mu_0\frob{\widehat{\nabla}u_{\emptyset}^{C_\infty}}^2\,\di x. \label{eq:monoC02}
	\end{align}
\end{lemma}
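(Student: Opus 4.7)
The plan is to derive both inequalities directly from the variational (``Dirichlet-principle'') characterization
\[
\inner{\Lambda(\lambda,\mu)g,g} = \sup_{v\in\H}\bigl(2\inner{g,v}-\norm{v}_{\lambda,\mu}^2\bigr),
\]
which follows from $\norm{u_g^{\lambda,\mu}-v}_{\lambda,\mu}^2\geq 0$ combined with the weak form $\inner{u_g^{\lambda,\mu},v}_{\lambda,\mu}=\inner{g,v}$; the supremum is attained at $v=u_g^{\lambda,\mu}$. The Hilbert space $\H$ and the energy $\norm{\,\cdot\,}_{\lambda,\mu}^2$ take the form appropriate to each operator in play: for $\Lambda_{C_0}^{C_\infty}$ one uses $\H_{C_0}^{C_\infty}$ with integration over $\Omega\setminus C_0$, whereas for $\Lambda_\emptyset^{C_\infty}$ one uses $\H_\emptyset^{C_\infty}$ with integration over $\Omega$.

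For the lower bound \eqref{eq:monoC02}, I would plug $v=u_\emptyset^{C_\infty}|_{\Omega\setminus C_0}$ into the variational principle for $\Lambda_{C_0}^{C_\infty}$. This restriction lies in $\H_{C_0}^{C_\infty}$ because $u_\emptyset^{C_\infty}\in\H_\emptyset^{C_\infty}$ already satisfies the $\GammaD$ condition and has $\widehat{\nabla}u_\emptyset^{C_\infty}=0$ in $C_\infty^\circ$, both of which survive the restriction (using the disjointness $C_0\cap C_\infty=\emptyset$ from Assumption~\ref{assump}). Splitting $\int_\Omega=\int_{\Omega\setminus C_0}+\int_{C_0}$ and recognizing that $u_\emptyset^{C_\infty}$ itself attains the supremum for $\Lambda_\emptyset^{C_\infty}$ then produces \eqref{eq:monoC02}.

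For the upper bound \eqref{eq:monoC01}, I would symmetrically use $v=Eu_{C_0}^{C_\infty}$ in the variational principle for $\Lambda_\emptyset^{C_\infty}$. By Definition~\ref{def:extension} this extension belongs to $H^1(\Omega)^d$, vanishes on $\GammaD$ (because $C_0\subset\Omega$ is strictly interior, so $E$ does not modify the trace on $\partial\Omega$), and satisfies $\widehat{\nabla}Eu_{C_0}^{C_\infty}=0$ in $C_\infty^\circ\subset\Omega\setminus C_0$, where the extension coincides with $u_{C_0}^{C_\infty}$; hence $Eu_{C_0}^{C_\infty}\in\H_\emptyset^{C_\infty}$. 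Together with the analogous splitting of the energy integral, the identity $Eu_{C_0}^{C_\infty}|_{\GammaN}=u_{C_0}^{C_\infty}|_{\GammaN}$, and the supremum-attaining identity for $\Lambda_{C_0}^{C_\infty}$, a direct rearrangement yields \eqref{eq:monoC01}.

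The only point requiring care is verifying that the two trial functions belong to their respective Hilbert spaces, and this comes down to the disjointness $C_0\cap C_\infty=\emptyset$ and the strict interiority of $C_0$ in $\Omega$ built into Assumption~\ref{assump}. No limiting argument (such as the truncation combined with Theorem~\ref{thm:convergence} used in Lemma~\ref{lemma:monoCinfty}) is required here, since the extension $E$ already provides a valid competitor in the larger space $\H_\emptyset^{C_\infty}$ and the restriction provides one in $\H_{C_0}^{C_\infty}$.
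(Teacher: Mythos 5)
Your proof is correct. For the upper bound \eqref{eq:monoC01} it is essentially the paper's own argument in different clothing: the paper expands $0\leq\norm{Eu_{C_0}^{C_\infty}-u_{\emptyset}^{C_\infty}}_{\widehat{\lambda},\widehat{\mu}}^2$, which is precisely the statement that $Eu_{C_0}^{C_\infty}$ is an admissible competitor in the Dirichlet principle for $\Lambda_{\emptyset}^{C_\infty}$, and your membership check ($E$ does not alter the trace on $\partial\Omega$, and rigidity in $C_\infty^\circ$ is preserved because $C_0\cap C_\infty=\emptyset$) is exactly what is needed. For the lower bound \eqref{eq:monoC02}, however, you take a genuinely different and more elementary route: the paper truncates the Lam\'e parameters to $\epsilon\lambda_0,\epsilon\mu_0$ on $C_0$, applies \eqref{eq:monoback1} of Lemma~\ref{lemma:monoback} to obtain the estimate with a factor $(1-\epsilon)$, and then removes $\epsilon$ via the operator-norm convergence of Theorem~\ref{thm:convergence}; you instead insert the restriction $u_{\emptyset}^{C_\infty}|_{\Omega\setminus C_0}\in\H_{C_0}^{C_\infty}$ directly as a competitor in the variational principle for $\Lambda_{C_0}^{C_\infty}$ and split the energy over $\Omega=(\Omega\setminus C_0)\cup C_0$, which produces \eqref{eq:monoC02} exactly with no limiting argument. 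Your route buys brevity and makes this direction of the lemma independent of the convergence theorem; the paper's route buys uniformity, since the truncate-and-pass-to-the-limit template is unavoidable for the companion bound \eqref{eq:monoCinfty2} in Lemma~\ref{lemma:monoCinfty}, where the analogous direct competitor $u_{C_0}^{\emptyset}$ fails to lie in $\H_{C_0}^{C_\infty}$ (it is not rigid in $C_\infty^\circ$), so no single-shot variational argument is available there. Both arguments are sound.
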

\begin{proof}
	We start by proving \eqref{eq:monoC01}. Let $\widehat{\lambda}$ and $\widehat{\mu}$ be the Lam\'e parameters associated with $\Lambda_{\emptyset}^{C_\infty}$, recall the extension operator $E$ (Definition~\ref{def:extension}), and note that
	\begin{equation*}
		\inner{u_{\emptyset}^{C_\infty},Eu_{C_0}^{C_\infty}}_{\widehat{\lambda},\widehat{\mu}} = \inner{\Lambda_{C_0}^{C_\infty}g,g} = \norm{Eu_{C_0}^{C_\infty}}_{\widehat{\lambda},\widehat{\mu}}^2 - \int_{C_0} \lambda_0\abs{\nabla\cdot Eu_{C_0}^{C_\infty}}^2 + 2\mu_0\frob{\widehat{\nabla}Eu_{C_0}^{C_\infty}}^2\,\di x.
	\end{equation*}
	The bound in \eqref{eq:monoC01} now follows via a short computation:
	\begin{align*}
		0 & \leq \norm{Eu_{C_0}^{C_\infty}-u_{\emptyset}^{C_\infty}}_{\widehat{\lambda},\widehat{\mu}} \\
		&= \norm{Eu_{C_0}^{C_\infty}}_{\widehat{\lambda},\widehat{\mu}}^2 + \norm{u_{\emptyset}^{C_\infty}}_{\widehat{\lambda},\widehat{\mu}}^2  - 2 \inner{u_{\emptyset}^{C_\infty},Eu_{C_0}^{C_\infty}}_{\widehat{\lambda},\widehat{\mu}} \\
		&= \inner{(\Lambda_{\emptyset}^{C_\infty}-\Lambda_{C_0}^{C_\infty})g,g} + \int_{C_0} \lambda_0\abs{\nabla\cdot Eu_{C_0}^{C_\infty}}^2 + 2\mu_0\frob{\widehat{\nabla}Eu_{C_0}^{C_\infty}}^2\,\di x. 
	\end{align*}
	
	We proceed to prove \eqref{eq:monoC02}. Define the following truncated (in $C_0$) Lam\'e parameters:
	\begin{equation*} 
		\lambda_\epsilon = \begin{cases}
			\lambda_0 & \text{in } \Omega\setminus C\\
			\epsilon\lambda_0 & \text{in } C_0\\
			\infty & \text{in } C_\infty
		\end{cases}
		\quad \text{and}
		\quad
		\mu_\epsilon = \begin{cases}
			\mu_0 & \text{in } \Omega\setminus C\\
			\epsilon\mu_0 & \text{in } C_0\\
			\infty & \text{in } C_\infty.
		\end{cases}
	\end{equation*}
	From \eqref{eq:monoback1} in Lemma~\ref{lemma:monoback} (where in this context $\Omega\setminus C$ is replaced by $\Omega\setminus C_\infty$), we have
	\begin{equation*}
		\inner{(\Lambda(\lambda_\epsilon,\mu_\epsilon)-\Lambda_{\emptyset}^{C_\infty})g,g} \geq (1-\epsilon)\int_{C_0}\lambda_0\abs{\nabla\cdot u_{\emptyset}^{C_\infty}}^2 +2\mu_0\frob{\widehat{\nabla}u_{\emptyset}^{C_\infty}}^2\,\di x.
	\end{equation*}
        Due to Theorem~\ref{thm:convergence}, the bound in \eqref{eq:monoC02} is obtained by letting $\epsilon\to 0$.
\end{proof}

\section{Virtual measurement operators} \label{sec:virtualop}

We will need some results on so-called \emph{virtual measurement operators} for Section~\ref{sec:localized}; see \cite{Eberle2021a,Garde2025,Garde2024,Harrach2008,Harrach2013} for more information on these types of operators. Let $V\subseteq\Omega$ be measurable and define the operators $\widehat{L}_V(\lambda_0,\mu_0) \colon L^2(V)^{d\times d} \to L^2(\GammaN)^d$ and $L_V(\lambda_0,\mu_0) \colon L^2(V) \to L^2(\GammaN)^d$ by
\begin{equation*}
	\widehat{L}_V(\lambda_0,\mu_0)F = \widehat{w}_{F}^{\lambda_0,\mu_0}|_{\GammaN} \quad\text{and}\quad L_V(\lambda_0,\mu_0)G = w_{G}^{\lambda_0,\mu_0}|_{\GammaN}
\end{equation*}
for $\lambda_0,\mu_0\in L^\infty_+(\Omega)$. Here $\widehat{w} = \widehat{w}_{F}^{\lambda_0,\mu_0}$ and $w = w_{G}^{\lambda_0,\mu_0}$ are the unique solutions in $\H_\emptyset^\emptyset$ of the variational problems
\begin{align}
	\inner{\widehat{w},v}_{\lambda_0,\mu_0} &= \inner{F,\widehat{\nabla}v}_{L^2(V)^{d\times d}},  \label{eq:virtopprob1}\\
	\inner{w,v}_{\lambda_0,\mu_0} &= \inner{G,\nabla\cdot v}_{L^2(V)} \label{eq:virtopprob2}
\end{align}
for all $v\in\H_\emptyset^\emptyset$. We have the following properties, with $R$ denoting the \emph{range} of an operator.
\begin{proposition} \needspace{2\baselineskip} \label{prop:virtualop}
	Let $V,V_1,V_2$ be measurable subsets of $\Omega$.
	\begin{enumerate}[\rm(i)]
		\item If $V$ has Lipschitz boundary and $\lambda_1,\mu_1\in L^\infty_+(\Omega)$ satisfy 
		\begin{equation*}
			\suppm(\lambda_0-\lambda_1)\cup\suppm(\mu_0-\mu_1)\subseteq V,
		\end{equation*}
		then
		\begin{equation*}
			R(L_V(\lambda_1,\mu_1))\subseteq R(\widehat{L}_V(\lambda_0,\mu_0)) = R(\widehat{L}_V(\lambda_1,\mu_1)).
		\end{equation*}
		\item $V_1\subseteq V_2$ implies 
		\begin{equation*}
			R(\widehat{L}_{V_1}(\lambda_0,\mu_0))\subseteq R(\widehat{L}_{V_2}(\lambda_0,\mu_0)) \quad\text{and}\quad R(L_{V_1}(\lambda_0,\mu_0))\subseteq R(L_{V_2}(\lambda_0,\mu_0)).
		\end{equation*}
		\item For $g\in L^2(\GammaN)^d$, the adjoints satisfy
		\begin{equation*}
			\widehat{L}_V(\lambda_0,\mu_0)^*g = \widehat{\nabla}u_g^{\lambda_0,\mu_0}|_V \quad\text{and}\quad L_V(\lambda_0,\mu_0)^*g = \nabla\cdot u_g^{\lambda_0,\mu_0}|_V.
		\end{equation*} 
	\end{enumerate}
\end{proposition}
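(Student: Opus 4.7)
The plan is to handle parts~(iii), (ii), (i) in order of increasing depth.

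For~\textbf{(iii)}, the adjoint formulas reduce to a symmetric duality computation. Given $g\in L^2(\GammaN)^d$ and $F\in L^2(V)^{d\times d}$, testing \eqref{eq:weakform} for $u_g^{\lambda_0,\mu_0}$ with $v=\widehat{w}_F^{\lambda_0,\mu_0}\in\H_\emptyset^\emptyset$, and separately testing \eqref{eq:virtopprob1} with $v=u_g^{\lambda_0,\mu_0}$, both yield the same value $\inner{u_g^{\lambda_0,\mu_0},\widehat{w}_F^{\lambda_0,\mu_0}}_{\lambda_0,\mu_0}$. Hence
\begin{equation*}
\inner{\widehat{L}_V(\lambda_0,\mu_0)F,g} = \inner{\widehat{w}_F^{\lambda_0,\mu_0}|_{\GammaN},g} = \inner{F,\widehat{\nabla} u_g^{\lambda_0,\mu_0}|_V}_{L^2(V)^{d\times d}},
\end{equation*}
giving the asserted adjoint. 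The argument for $L_V$ is identical via \eqref{eq:virtopprob2}.

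For~\textbf{(ii)}, the zero-extension $\widetilde{F}\in L^2(V_2)^{d\times d}$ of any $F\in L^2(V_1)^{d\times d}$ leaves the right-hand side of \eqref{eq:virtopprob1} unchanged, so $\widehat{w}_{\widetilde{F}}^{\lambda_0,\mu_0}=\widehat{w}_F^{\lambda_0,\mu_0}$ and their $\GammaN$-traces coincide, yielding $R(\widehat{L}_{V_1}(\lambda_0,\mu_0))\subseteq R(\widehat{L}_{V_2}(\lambda_0,\mu_0))$. Identical reasoning via \eqref{eq:virtopprob2} handles $L_V$.

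For~\textbf{(i)}, I would appeal to Douglas's range-inclusion theorem together with~(iii). The inclusion $R(L_V(\lambda_1,\mu_1))\subseteq R(\widehat{L}_V(\lambda_1,\mu_1))$ is immediate, since \eqref{eq:nablabnd} yields
\begin{equation*}
\norm{L_V(\lambda_1,\mu_1)^*g}_{L^2(V)} \leq \sqrt{d}\,\norm{\widehat{L}_V(\lambda_1,\mu_1)^*g}_{L^2(V)^{d\times d}}, \quad g\in L^2(\GammaN)^d.
\end{equation*}
The equality $R(\widehat{L}_V(\lambda_0,\mu_0))=R(\widehat{L}_V(\lambda_1,\mu_1))$ likewise reduces, via Douglas, to a two-sided norm equivalence of $g\mapsto\norm{\widehat{\nabla} u_g^{\lambda_j,\mu_j}|_V}_{L^2(V)^{d\times d}}$ for $j\in\{0,1\}$. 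I would establish this by applying Lemma~\ref{lemma:monoback} with the indices swapped: since $\suppm(\lambda_0-\lambda_1)\cup\suppm(\mu_0-\mu_1)\subseteq V$, all integrals in \eqref{eq:monoback1}--\eqref{eq:monoback2} reduce to integrals over $V$, and combined with \eqref{eq:nablabnd} and the $L^\infty_+$-bounds on all four Lam\'e parameters this should produce the required equivalence.

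The main obstacle is the potential indefiniteness of the signs of $\lambda_0-\lambda_1$ and $\mu_0-\mu_1$, which may prevent the monotonicity weights from being uniformly bounded away from zero. I would sidestep this by introducing an auxiliary dominating pair $\lambda_*=\max(\lambda_0,\lambda_1)$, $\mu_*=\max(\mu_0,\mu_1)$ in $L^\infty_+(\Omega)$ (which still agrees with $\lambda_j,\mu_j$ outside $V$) and proving $R(\widehat{L}_V(\lambda_j,\mu_j))=R(\widehat{L}_V(\lambda_*,\mu_*))$ separately for $j=0,1$, where the one-sided inequalities $\lambda_*\geq\lambda_j$ and $\mu_*\geq\mu_j$ eliminate the sign ambiguity. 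Transitivity of set equality then yields the full identification. The Lipschitz regularity of $\partial V$ enters through Korn--Poincar\'e inequalities when converting the mixed $\widehat{\nabla}$/$\nabla\cdot$ bounds coming from Lemma~\ref{lemma:monoback} into bounds purely in $\norm{\widehat{\nabla}\,\cdot\,|_V}_{L^2(V)^{d\times d}}$.
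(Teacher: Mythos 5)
Your treatments of parts (ii) and (iii) are correct and coincide with the paper's (zero extension of the source, and the symmetric duality computation, respectively), as does your derivation of $R(L_V(\lambda_1,\mu_1))\subseteq R(\widehat{L}_V(\lambda_1,\mu_1))$ from \eqref{eq:nablabnd}, part (iii), and Lemma~\ref{lemma:rangenorm}.

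The gap is in your proof of the range \emph{equality} $R(\widehat{L}_V(\lambda_0,\mu_0)) = R(\widehat{L}_V(\lambda_1,\mu_1))$. Via Lemma~\ref{lemma:rangenorm} and part (iii), you correctly reduce this to a two-sided equivalence between $\norm{\widehat{\nabla}u_g^{\lambda_0,\mu_0}}_{L^2(V)^{d\times d}}$ and $\norm{\widehat{\nabla}u_g^{\lambda_1,\mu_1}}_{L^2(V)^{d\times d}}$, uniformly in $g$. But the monotonicity inequalities of Lemma~\ref{lemma:monoback} cannot deliver the lower bounds you need, even after introducing the dominating pair $(\lambda_*,\mu_*)$. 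Writing $u_j = u_g^{\lambda_j,\mu_j}$, the relevant lower bound \eqref{eq:monoback2} reads
\begin{equation*}
	\inner{(\Lambda(\lambda_j,\mu_j)-\Lambda(\lambda_*,\mu_*))g,g} \geq \int_{V} \frac{\lambda_j}{\lambda_*}(\lambda_*-\lambda_j)\abs{\nabla\cdot u_j}^2 + 2\frac{\mu_j}{\mu_*}(\mu_*-\mu_j)\frob{\widehat{\nabla}u_j}^2 \, \di x,
\end{equation*}
and the weights $\lambda_*-\lambda_j\geq 0$ and $\mu_*-\mu_j\geq 0$ are not bounded below by a positive constant on $V$: for $j=0$ they vanish identically on the (possibly positive-measure, possibly all of $V$) subset where $\lambda_0\geq\lambda_1$ and $\mu_0\geq\mu_1$. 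The proposition imposes no definiteness of $\lambda_1-\lambda_0$, $\mu_1-\mu_0$ on $V$ --- they may even coincide with $\lambda_0,\mu_0$ on much of $V$ --- so the right-hand side gives no control of $\int_V\frob{\widehat{\nabla}u_j}^2\,\di x$ there, and the upper bound $\inner{(\Lambda(\lambda_j,\mu_j)-\Lambda(\lambda_*,\mu_*))g,g}\leq K\int_V\frob{\widehat{\nabla}u_*}^2\,\di x$ is then useless for comparing the two norms. Your dominating-pair device removes the sign ambiguity but not this degeneracy, which is the actual obstruction; in fact the standard logic runs in the opposite direction: the norm equivalence is a \emph{consequence} of the range equality via Lemma~\ref{lemma:rangenorm}, not a route to it.

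The paper's argument is instead constructive and sign-independent: given $\varphi=\widehat{L}_V(\lambda_0,\mu_0)F_0 = w_0|_{\GammaN}$, one rewrites the variational identity for $w_0$ with respect to $\inner{\,\cdot\,,\,\cdot\,}_{\lambda_1,\mu_1}$, picking up the extra volume term in \eqref{eq:virtopproof1} supported on $V$, and then represents that term as $\inner{\widehat{\nabla}h,\widehat{\nabla}v}_{L^2(V)^{d\times d}}$ by solving the auxiliary problem \eqref{eq:hvarprob} on the zero-mean space $\mathcal{V}$ over $V$ (this is where the Lipschitz boundary of $V$ enters, through the Korn and Poincar\'e inequalities needed for Lax--Milgram). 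Setting $F_1=\widehat{\nabla}h+F_0$ exhibits $\varphi\in R(\widehat{L}_V(\lambda_1,\mu_1))$ directly, and symmetry gives the reverse inclusion. You would need to replace your monotonicity step by such a construction.
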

\begin{proof} 
	\textbf{(i):} Let there be another pair of Lam\'e parameters $\lambda_1,\mu_1\in L^\infty_+(\Omega)$, which only may differ from $\lambda_0$ and $\mu_0$ in $V$. Let $\varphi \in R(\widehat{L}_V(\lambda_0,\mu_0))$, i.e.,
	\begin{equation*}
		\varphi = \widehat{L}_V(\lambda_0,\mu_0)F_0 = w_0|_{\GammaN},
	\end{equation*}
	for some $F_0\in L^2(V)^{d\times d}$ and $w_0 = \widehat{w}_{F_0}^{\lambda_0,\mu_0}$. In particular,
	\begin{align}
		\inner{w_0,v}_{\lambda_1,\mu_1} &= \int_{\Omega}(\lambda_1-\lambda_0)(\nabla\cdot w_0)(\nabla\cdot v) + 2(\mu_1-\mu_0)\widehat{\nabla}w_0:\widehat{\nabla}v\,\di x + \inner{w_0,v}_{\lambda_0,\mu_0} \notag \\
		&= \int_V (\lambda_1-\lambda_0)(\nabla\cdot w_0)(\nabla\cdot v) + 2(\mu_1-\mu_0)\widehat{\nabla}w_0:\widehat{\nabla}v\,\di x + \inner{F_0,\widehat{\nabla}v}_{L^2(V)^{d\times d}}. \label{eq:virtopproof1}
	\end{align}
	In the following, we may assume that $V$ consists of a single connected component because otherwise the integral in \eqref{eq:virtopproof1} can be separately considered on each connected component, as can the rest of the proof. 
	
Consider the Hilbert space 
\begin{equation}
  \label{eq:VHilbert}
		\mathcal{V} = \Bigl\{\, v\in H^1(V^\circ) : \int_V v\,\di x = 0 \text{ in }\R^d \,\Bigr\}
	\end{equation}
        and the auxiliary variational problem
	\begin{equation} \label{eq:hvarprob}
		\inner{\widehat{\nabla}h,\widehat{\nabla}v}_{L^2(V)^{d\times d}} = \int_V (\lambda_1-\lambda_0)(\nabla\cdot w_0)(\nabla\cdot v) + 2(\mu_1-\mu_0)\widehat{\nabla}w_0:\widehat{\nabla}v\,\di x
	\end{equation}
        for all $v\in \mathcal{V}$. The zero-mean condition in \eqref{eq:VHilbert}, combined with Poincar\'e's and Korn's inequalities, ensures that the left-hand side of \eqref{eq:hvarprob} defines on $\mathcal{V}$ an inner product, which induces a norm that is equivalent to the usual $H^1$-norm on $\mathcal{V}$.
	%
	%
        In consequence, the Lax--Milgram lemma ensures a unique solution $h\in \mathcal{V}$ to \eqref{eq:hvarprob}. 
	
	Returning to \eqref{eq:virtopproof1}, we can assume that the considered $v$ satisfy $v|_V\in \mathcal{V}$, since the values of the expressions on the right-hand side of \eqref{eq:virtopproof1} do not change if a constant vector is subtracted from~$v$. Hence we have
	\begin{equation*}
		\inner{w_0,v}_{\lambda_1,\mu_1} = \inner{\widehat{\nabla}h + F_0,\widehat{\nabla}v}_{L^2(V)^{d\times d}},
	\end{equation*} 
and the unique solvability of \eqref{eq:virtopprob1} implies
	\begin{equation*}
		\varphi = w_0|_{\GammaN} = \widehat{L}_V(\lambda_1,\mu_1)F_1
	\end{equation*} 
        for $F_1 = \widehat{\nabla}h + F_0$. Thereby, we have proven $R(\widehat{L}_V(\lambda_0,\mu_0))\subseteq R(\widehat{L}_V(\lambda_1,\mu_1))$. The opposite set inclusion follows from the same proof, by swapping the roles of $(\lambda_0,\mu_0)$ and $(\lambda_1,\mu_1)$.
	
	Now for the proof involving $L_V$, we immediately get $R(L_V(\lambda_1,\mu_1))\subseteq R(\widehat{L}_V(\lambda_1,\mu_1))$ from part~(iii) of this proposition, \eqref{eq:nablabnd}, and Lemma~\ref{lemma:rangenorm}.
	
	\textbf{(ii):} We only give the proof for $\widehat{L}_V(\lambda_0,\mu_0)$ since the one for $L_V(\lambda_0,\mu_0)$ is almost identical. Let $\varphi = \widehat{L}_{V_1}(\lambda_0,\mu_0)F_1 = \widehat{w}|_{\GammaN}$ for some $F_1\in L^2(V_1)^{d\times d}$ and $\widehat{w} = \widehat{w}_{F_1}^{\lambda_0,\mu_0}$. As $V_1\subseteq V_2$, we can extend $F_1$ by zero to $F_2\in L^2(V_2)^{d\times d}$, and it immediately follows that
	\begin{equation*}
		\inner{\widehat{w},v}_{\lambda_0,\mu_0} = \inner{F_1,\widehat{\nabla}v}_{L^2(V_1)^{d\times d}} = \inner{F_2,\widehat{\nabla}v}_{L^2(V_2)^{d\times d}}
	\end{equation*}
        for all $v\in\H_\emptyset^\emptyset$. Hence, $\varphi = \widehat{L}_{V_2}(\lambda_0,\mu_0)F_2$, and therefore $R(\widehat{L}_{V_1}(\lambda_0,\mu_0))\subseteq R(\widehat{L}_{V_2}(\lambda_0,\mu_0))$.
	
	\textbf{(iii):} Again, we only present the proof for $\widehat{L}_V(\lambda_0,\mu_0)$ since the one for $L_V(\lambda_0,\mu_0)$ is almost identical. The result follows from a short calculation that utilizes the variational problem \eqref{eq:virtopprob1} for $\widehat{w} = \widehat{w}_{F}^{\lambda_0,\mu_0}$ and the weak problem \eqref{eq:weakform} for $u = u_g^{\lambda_0,\mu_0}$:
	\begin{align*}
		\inner{\widehat{L}_V(\lambda_0,\mu_0)^* g,F}_{L^2(V)^{d\times d}} &= \inner{g,\widehat{L}_V(\lambda_0,\mu_0)F} =\inner{g,\widehat{w}} \\
		&=\inner{u,\widehat{w}}_{\lambda_0,\mu_0} =\inner{\widehat{\nabla}u,F}_{L^2(V)^{d\times d}},
	\end{align*}
	and thus $\widehat{L}_V(\lambda_0,\mu_0)^* g = \widehat{\nabla}u|_V$.
\end{proof}
\begin{remark}
	Note that we only have the equality of ranges for $\widehat{L}$ in Proposition~\ref{prop:virtualop}(i). The natural approach to proving the range equality also for $L$ would require solving an auxiliary variational problem like \eqref{eq:hvarprob}, but with the left hand-side $\inner{\nabla\cdot h,\nabla \cdot v}_{L^2(V)}$, which clearly does not define an inner product on $\H_\emptyset^\emptyset$. One can try to fix this flaw by only considering \eqref{eq:virtopprob2} for $v\in\mathcal{W}^\perp$, where $\mathcal{W}\subset\H_{\emptyset}^{\emptyset}$ are the functions with $\nabla\cdot v\equiv 0$ and the orthogonal complement $\mathcal{W}^\perp$ is with respect to $\inner{\,\cdot\,,\,\cdot\,}_{\lambda_0,\mu_0}$. This construction still gives $L_V(\lambda_0,\mu_0)^*g = \nabla\cdot u_g^{\lambda_0,\mu_0}|_V$. However, while we get an inner product, its norm is not equivalent to the $H^1$-norm, so we lack the Hilbert space property needed for the Lax--Milgram lemma. 
\end{remark}
The range inclusions can be translated into norm estimates for the adjoints.
\begin{lemma}[Lemma~2.5 in \cite{Harrach2008}] \label{lemma:rangenorm}
	Let $H$, $K_1$, and $K_2$ be Hilbert spaces and let $A_j\in\mathscr{L}(K_j,H)$ for $j = 1,2$. Then
	\begin{equation*}
		R(A_1) \subseteq R(A_2) \qquad \text{if and only if} \qquad \exists K>0, \forall x\in H\colon \norm{A_1^* x}_{K_1} \leq K\norm{A_2^* x}_{K_2}.
	\end{equation*}
\end{lemma}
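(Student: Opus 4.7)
The plan is to establish both implications through a classical factorization argument (essentially Douglas' range inclusion theorem), leveraging that $H$, $K_1$, $K_2$ are Hilbert spaces to decompose them via orthogonal complements of kernels/ranges.

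For the forward direction, assume $R(A_1)\subseteq R(A_2)$. I would construct a bounded operator $C\in\mathscr{L}(K_1,K_2)$ with $A_1 = A_2 C$ as follows. For each $y\in K_1$, the set $A_2^{-1}(\{A_1 y\})$ is non-empty by the range inclusion, so I define $Cy$ to be the unique element of this preimage lying in $(\ker A_2)^\perp$ (using that $A_2$ is injective on $(\ker A_2)^\perp$). Linearity of $C$ follows from uniqueness. To obtain boundedness of $C$ I would invoke the closed graph theorem: if $y_n\to y$ in $K_1$ and $Cy_n\to z$ in $K_2$, then continuity of $A_2$ gives $A_2 z = \lim A_2 Cy_n = \lim A_1 y_n = A_1 y$, and since $(\ker A_2)^\perp$ is closed, $z\in (\ker A_2)^\perp$, so $z = Cy$. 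Taking adjoints in $A_1 = A_2 C$ yields $A_1^* = C^* A_2^*$, whence $\norm{A_1^* x}_{K_1} \leq \norm{C^*}\norm{A_2^* x}_{K_2}$ and we may take $K = \norm{C^*}$.

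For the converse, assume the norm estimate. I would define a linear map $T_0\colon R(A_2^*)\to K_1$ by $T_0(A_2^* x) = A_1^* x$. The estimate makes this well-defined (since $A_2^* x_1 = A_2^* x_2$ forces $A_1^*(x_1-x_2)=0$) and bounded with $\norm{T_0}\leq K$. I then extend $T_0$ continuously to $\overline{R(A_2^*)}$ and set it to zero on the orthogonal complement, which coincides with $\ker A_2$ by standard Hilbert-space duality. This produces $T\in\mathscr{L}(K_2, K_1)$ satisfying $T A_2^* = A_1^*$. Taking adjoints gives $A_1 = A_2 T^*$, so $R(A_1)\subseteq R(A_2)$.

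The main obstacle is the boundedness of $C$ in the forward direction: unlike the reverse direction where the hypothesis directly furnishes a norm bound, here boundedness is not available from a pointwise inequality and must instead be extracted via the closed graph theorem. The remaining steps are routine manipulations with adjoints, orthogonal decompositions, and continuous extension from a dense subspace.
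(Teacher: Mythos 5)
Your proof is correct. Note that the paper does not prove this lemma at all: it is imported verbatim as Lemma~2.5 of \cite{Harrach2008}, so there is no in-paper argument to compare against. What you have written is the standard Douglas-type factorization proof of that cited result, and both directions are sound: in the forward direction the selection $Cy\in A_2^{-1}(\{A_1y\})\cap(\ker A_2)^\perp$ is well defined and linear, the closed graph theorem legitimately yields boundedness, and $A_1^*=C^*A_2^*$ gives the estimate (with the trivial caveat that one should take $K=\max\{\norm{C^*},1\}$ to guarantee $K>0$ when $A_1=0$); in the converse direction the map $T_0$ is well defined by the estimate, extends by continuity to $\overline{R(A_2^*)}=(\ker A_2)^\perp$, and taking adjoints in $TA_2^*=A_1^*$ gives $A_1=A_2T^*$ and hence the range inclusion.
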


\section{Localization results} \label{sec:localized}

In this section, we assume $C_0$ and $C_\infty$, with $C = C_0\cup C_\infty$, satisfy Assumption~\ref{assump}, and let $\lambda$ and $\mu$ be as in \eqref{eq:extremeLame}, with background parameters $\lambda_0,\mu_0\in L^\infty_+(\Omega)$ and extreme inclusions in $C$.

One of the core ingredients in proving the ``difficult direction'' for monotonicity-based methods, such as the latter statements in Theorems~\ref{thm:monoext}--\ref{thm:mononeg} and \ref{thm:monofinitelin}, is localized potentials. We start by the localization result in the non-extreme case. Recall Definition~\ref{def:ucp} and Remark~\ref{remark:ucp} on the UCP condition.
\begin{lemma}[Theorem~3.3 in \cite{Eberle2021a}] \label{lemma:locpot}
	Let $U\subset\overline{\Omega}$ be a relatively open connected set that intersects $\GammaN$. Let $B\Subset U$ be a non-empty open set and assume $\lambda_0$ and $\mu_0$ satisfy the UCP in $U$. Then there are sequences $(g_i)$ in $L^2(\GammaN)^d$ and $(u_i)$ in $H^1(\Omega)^d$, with $u_i = u_{g_i}^{\lambda_0,\mu_0}$, such that
	\begin{align}
		\lim_{i\to\infty}\int_B \abs{\nabla\cdot u_i}^2\,\di x = \lim_{i\to\infty}\int_B \frob{\widehat{\nabla}u_i}^2\,\di x &= \infty, \label{eq:locpotblowup}\\
		\lim_{i\to\infty}\int_{\Omega\setminus U} \abs{\nabla\cdot u_i}^2\,\di x = \lim_{i\to\infty}\int_{\Omega\setminus U} \frob{\widehat{\nabla}u_i}^2\,\di x &= 0. \label{eq:locpotzero}
	\end{align}
\end{lemma}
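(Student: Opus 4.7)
The strategy is to translate the localization claim into two range non-inclusions for the virtual measurement operators of Section~\ref{sec:virtualop}, and then to derive those non-inclusions from the UCP. By Proposition~\ref{prop:virtualop}(iii) the adjoints satisfy $\widehat{L}_V(\lambda_0,\mu_0)^* g = \widehat{\nabla} u_g^{\lambda_0,\mu_0}|_V$ and $L_V(\lambda_0,\mu_0)^* g = \nabla\cdot u_g^{\lambda_0,\mu_0}|_V$, so by Lemma~\ref{lemma:rangenorm} the lemma reduces to showing
\[
R(\widehat{L}_B(\lambda_0,\mu_0)) \not\subseteq R(\widehat{L}_{\Omega\setminus U}(\lambda_0,\mu_0)) \quad\text{and}\quad R(L_B(\lambda_0,\mu_0)) \not\subseteq R(\widehat{L}_{\Omega\setminus U}(\lambda_0,\mu_0)).
\]
The companion inclusion $R(L_{\Omega\setminus U}(\lambda_0,\mu_0)) \subseteq R(\widehat{L}_{\Omega\setminus U}(\lambda_0,\mu_0))$ follows from \eqref{eq:nablabnd} together with Lemma~\ref{lemma:rangenorm}, so decay of $\norm{\widehat{\nabla} u_{g_i}}_{L^2(\Omega\setminus U)^{d\times d}}$ automatically carries along the decay of $\norm{\nabla\cdot u_{g_i}}_{L^2(\Omega\setminus U)}$.

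Both non-inclusions are proved by contradiction in parallel. Assuming $R(\widehat{L}_B)\subseteq R(\widehat{L}_{\Omega\setminus U})$, for any fixed $F\in L^2(B)^{d\times d}$ there exists $G\in L^2(\Omega\setminus U)^{d\times d}$ with $\widehat{L}_B F = \widehat{L}_{\Omega\setminus U} G$. Since the supports of $F$ and $G$ lie outside $U\setminus\overline{B}$, the difference $w = \widehat{w}_F^{\lambda_0,\mu_0} - \widehat{w}_G^{\lambda_0,\mu_0}$ solves the homogeneous Lam\'e system in $U\setminus\overline{B}$, has vanishing Dirichlet trace on $\GammaN$ (by the range identity) and vanishing Neumann traction on $\GammaN\cap(U\setminus\overline{B})$ (built into the weak formulation \eqref{eq:virtopprob1}). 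Because $B \Subset U$, a connected component of $U\setminus\overline{B}$ is relatively open in $\overline{\Omega}$ and still meets $\GammaN$ in an open piece; the UCP in that component (inherited from UCP in $U$) forces $w \equiv 0$ in a collar neighbourhood of $\partial U\cap\Omega^\circ$ inside $U$. One can then glue $\widehat{w}_G$ inside $U$ with $\widehat{w}_F$ outside $U$ into an $H^1$-function that weakly solves the homogeneous mixed boundary value problem on $\Omega$ --- the vanishing of $w$ on the collar guarantees matching traces and tractions across $\partial U\cap\Omega^\circ$. Lax--Milgram uniqueness then collapses the glued function to zero, whence $\widehat{w}_F \equiv 0$ in $\Omega\setminus\overline{B}$. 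The weak formulation \eqref{eq:virtopprob1} now imposes the overdetermined pair $\widehat{w}_F|_{\partial B}=0$ together with $\bigl(\lambda_0(\nabla\cdot\widehat{w}_F)I + 2\mu_0\widehat{\nabla}\widehat{w}_F\bigr)n = Fn$ on $\partial B$, which fails for, e.g., the constant choice $F = I$. The analogous argument with $L_B$, a scalar source $G\in L^2(\Omega\setminus U)$ and the explicit choice $G\equiv 1$ on $B$ handles the second non-inclusion.

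Given both non-inclusions, Lemma~\ref{lemma:rangenorm} yields sequences $(g_i^{(1)})$ and $(g_i^{(2)})$ in $L^2(\GammaN)^d$ that separately realise $\norm{\widehat{\nabla} u_{g_i^{(1)}}}_{L^2(B)^{d\times d}}^2\to\infty$ and $\norm{\nabla\cdot u_{g_i^{(2)}}}_{L^2(B)}^2\to\infty$ while forcing decay on $\Omega\setminus U$. A rescaled superposition $g_i = g_i^{(1)} + \alpha_i g_i^{(2)}$ with weights $\alpha_i>0$ chosen by a diagonal argument so as to dominate all cross-terms then yields a single sequence realising both blow-ups on $B$ and both decays on $\Omega\setminus U$, as required. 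The main technical obstacle is the UCP step --- in particular, verifying that UCP transfers from $U$ to a connected component of $U\setminus\overline{B}$ still meeting $\GammaN$ (for which $B \Subset U$ is essential), and that the glued function is genuinely an element of $H^1(\Omega)^d$ with matching weak tractions across $\partial U\cap\Omega^\circ$.
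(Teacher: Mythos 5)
The paper does not actually prove this lemma: it defers entirely to \cite[Theorem~3.3]{Eberle2021a} (with $D_1=B$, $D_2=\Omega\setminus\overline{U}$) and to \cite[Proof of Theorem~3.6]{Harrach2013}. Your argument reconstructs exactly that standard route --- virtual measurement operators, Lemma~\ref{lemma:rangenorm}, and a UCP-based range non-inclusion proved by gluing --- which is also the machinery this paper sets up in Section~\ref{sec:virtualop}, so the strategy is the intended one. One simplification: the single non-inclusion $R(L_B(\lambda_0,\mu_0))\not\subseteq R(\widehat{L}_{\Omega\setminus U}(\lambda_0,\mu_0))$ already yields all four limits, because after the usual normalization it gives $\int_B\abs{\nabla\cdot u_i}^2\to\infty$ and $\int_{\Omega\setminus U}\frob{\widehat{\nabla}u_i}^2\to 0$, and \eqref{eq:nablabnd} then forces $\int_B\frob{\widehat{\nabla}u_i}^2\to\infty$ and $\int_{\Omega\setminus U}\abs{\nabla\cdot u_i}^2\to 0$; your second non-inclusion and the superposition/diagonal step can be dropped.

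The substantive gap is in the gluing step. The UCP gives $w=\widehat{w}_F-\widehat{w}_G\equiv 0$ only on the connected component of $(U\cap\Omega)\setminus\overline{B}$ whose boundary meets $\GammaN$ in a relatively open piece, and the collar of $\partial U\cap\Omega$ need not lie in that component. Concretely, take $\Omega$ the unit disk, $\GammaN=\partial\Omega$, $U=\overline{\Omega}\setminus\overline{B(0,1/10)}$, and $B=\{1/5<\abs{x}<3/10\}$: then $U\setminus\overline{B}$ has an inner component that carries all of $\partial U\cap\Omega$ and does not touch $\GammaN$, so your matching of traces and tractions across $\partial U\cap\Omega$ is not established and the glued function need not lie in $H^1(\Omega)^d$. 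The standard repair is to first replace $B$ by a small open ball $B'\Subset B$ (removing a closed ball from a connected open subset of $\R^d$, $d\geq 2$, preserves connectedness), prove the non-inclusion for $B'$, and transfer back via $\int_{B'}\leq\int_B$. Relatedly, the ``inheritance'' of the UCP from $U$ to the relevant component of $U\setminus\overline{B'}$ is not a formal consequence of Definition~\ref{def:ucp}; it holds under the sufficient conditions of Remark~\ref{remark:ucp}, but strictly speaking the UCP must be invoked on the subdomain. With these repairs the argument closes; your final contradiction via $F=I$ (equivalently $G\equiv 1$ for $L_B$, since $I:\widehat{\nabla}v=\nabla\cdot v$) is correct and needs no boundary regularity of $B$ beyond what the variational uniqueness of the Dirichlet problem on $B$ provides.
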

\begin{proof}
	This result is a reformulation of \cite[Theorem~3.3]{Eberle2021a}, for which the same proof can be completed using $D_1 = B$ and $D_2 = \Omega\setminus\overline{U}$. For the same approach, in the context of electrical impedance tomography, see also \cite[Proof of Theorem~3.6]{Harrach2013}.
\end{proof}
First we prove that such localization can partially be transferred to another set of non-extreme Lam\'e parameters, provided that the differences in the two sets of Lam\'e parameters occur in the part of the domain where the localized potentials tend to zero.
\begin{lemma} \label{lemma:locpottransf}
	Let $U$ and $B$ be as in Lemma~\ref{lemma:locpot}, and such that $\partial(\Omega\setminus\overline{U})$ and $\partial B$ are Lipschitz continuous. Suppose $u_i = u_{g_i}^{\lambda_0,\mu_0}$ satisfies \eqref{eq:locpotblowup} and \eqref{eq:locpotzero}. Assume that $\lambda_1,\mu_1\in L^\infty_+(\Omega)$ and $\suppm(\lambda_0-\lambda_1)\cup\suppm(\mu_0-\mu_1)\subset \Omega\setminus\overline{U}$. Define $\widehat{u}_i = u_{g_i}^{\lambda_1,\mu_1}$. Then
	\begin{align*}
		\lim_{i\to\infty}\int_B \frob{\widehat{\nabla}\widehat{u}_i}^2\,\di x &= \infty, \\
		\lim_{i\to\infty}\int_{\Omega\setminus U} \abs{\nabla\cdot \widehat{u}_i}^2\,\di x &= \lim_{i\to\infty}\int_{\Omega\setminus U} \frob{\widehat{\nabla}\widehat{u}_i}^2\,\di x = 0.
	\end{align*}
\end{lemma}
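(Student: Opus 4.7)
\medskip
\noindent\textbf{Proof plan for Lemma~\ref{lemma:locpottransf}.}
The natural strategy is to compare $\widehat u_i$ directly against $u_i$ and show that the difference goes to zero in $H^1(\Omega)^d$. Since $\lambda_1 = \lambda_0$ and $\mu_1=\mu_0$ on $\overline U$, subtracting the weak formulations for $u_i$ and $\widehat u_i$ (both living in $\H_\emptyset^\emptyset$, since there are no extreme inclusions here) gives, for every $v\in\H_\emptyset^\emptyset$,
\begin{equation*}
    \inner{\widehat u_i - u_i,v}_{\lambda_1,\mu_1}
    = \int_{\Omega\setminus\overline U}\!\!\bigl[(\lambda_0-\lambda_1)(\nabla\cdot u_i)(\nabla\cdot v) + 2(\mu_0-\mu_1)\widehat{\nabla}u_i:\widehat{\nabla}v\bigr]\di x,
\end{equation*}
because $\suppm(\lambda_0-\lambda_1)\cup\suppm(\mu_0-\mu_1)\subset \Omega\setminus\overline U$.

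Testing this identity with $v=\widehat u_i - u_i$, using the boundedness of $\lambda_0-\lambda_1$ and $\mu_0-\mu_1$, Cauchy--Schwarz, and the equivalence of $\norm{\,\cdot\,}_{\lambda_1,\mu_1}$ with the $H^1(\Omega)^d$-norm on $\H_\emptyset^\emptyset$, I would obtain
\begin{equation*}
    \norm{\widehat u_i - u_i}_{H^1(\Omega)^d} \leq K\bigl(\norm{\nabla\cdot u_i}_{L^2(\Omega\setminus U)} + \norm{\widehat{\nabla}u_i}_{L^2(\Omega\setminus U)^{d\times d}}\bigr).
\end{equation*}
By the hypothesis \eqref{eq:locpotzero}, the right-hand side tends to zero, so $\widehat u_i - u_i \to 0$ in $H^1(\Omega)^d$.

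From this $H^1$-convergence, the two claims of the lemma follow immediately. For the decay on $\Omega\setminus U$, the triangle inequality gives
\begin{equation*}
    \norm{\widehat{\nabla}\widehat u_i}_{L^2(\Omega\setminus U)^{d\times d}} \leq \norm{\widehat{\nabla}u_i}_{L^2(\Omega\setminus U)^{d\times d}} + \norm{\widehat u_i - u_i}_{H^1(\Omega)^d} \to 0,
\end{equation*}
and analogously for $\nabla\cdot\widehat u_i$ via \eqref{eq:nablabnd}. For the blow-up on $B$, the reverse triangle inequality yields
\begin{equation*}
    \norm{\widehat{\nabla}\widehat u_i}_{L^2(B)^{d\times d}} \geq \norm{\widehat{\nabla}u_i}_{L^2(B)^{d\times d}} - \norm{\widehat u_i - u_i}_{H^1(\Omega)^d},
\end{equation*}
whose right-hand side diverges to $+\infty$ because of \eqref{eq:locpotblowup} and the already established $H^1$-smallness of $\widehat u_i - u_i$.

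I do not expect a serious obstacle; the argument is essentially an energy comparison controlled by the fact that the Lam\'e perturbation is supported away from $B$. The only subtle point worth flagging is that the lemma asserts blow-up only for $\widehat{\nabla}\widehat u_i$ on $B$ (and not for $\nabla\cdot \widehat u_i$), which is consistent with the method: the reverse triangle inequality transfers the blow-up of any single quadratic functional of $\nabla u_i$ from $u_i$ to $\widehat u_i$, but after the transfer we retain blow-up of whichever individual quantity we had in the hypothesis — and for the subsequent monotonicity arguments the symmetric-gradient blow-up alone is sufficient.
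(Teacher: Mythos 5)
Your argument is correct, and it is genuinely different from the paper's proof. The paper routes the transfer through the virtual measurement operators of Section~\ref{sec:virtualop}: it establishes the range relations $R(L_{V_1}(\lambda_1,\mu_1))\subseteq R(\widehat{L}_{V_1}(\lambda_0,\mu_0))=R(\widehat{L}_{V_1}(\lambda_1,\mu_1))$ and $R(\widehat{L}_{V_1\cup V_2}(\lambda_0,\mu_0))=R(\widehat{L}_{V_1\cup V_2}(\lambda_1,\mu_1))$ via Proposition~\ref{prop:virtualop}(i), and then converts these into the asserted limits through the adjoint characterization in Proposition~\ref{prop:virtualop}(iii) and Lemma~\ref{lemma:rangenorm}. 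That route is why the lemma carries the hypothesis that $\partial(\Omega\setminus\overline{U})$ and $\partial B$ are Lipschitz (needed for Proposition~\ref{prop:virtualop}(i)), and why the blow-up on $B$ is only stated for the symmetric gradient: the range \emph{equality} is available only for $\widehat{L}_V$, not for the divergence-based $L_V$ (see the remark following Proposition~\ref{prop:virtualop}). Your direct perturbation estimate --- test the difference of the two weak formulations with $v=\widehat{u}_i-u_i$ and use that the coefficient perturbation is supported in $\Omega\setminus\overline{U}$, where \eqref{eq:locpotzero} gives decay --- bypasses all of that machinery, does not need either Lipschitz hypothesis, and in fact proves the strictly stronger conclusion that $\int_B\abs{\nabla\cdot\widehat{u}_i}^2\,\di x\to\infty$ as well (your closing remark undersells this: the reverse triangle inequality applied to $\nabla\cdot$ gives the divergence blow-up for free once $\widehat{u}_i-u_i\to 0$ in $H^1(\Omega)^d$). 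What the paper's approach buys in exchange is consistency with the standard toolbox of this literature and a template that still functions when the two coefficient pairs are related only by range inclusions rather than by exact agreement on $\overline{U}$; under the lemma's actual hypotheses, however, your energy comparison is both simpler and sharper.
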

\begin{proof}
	Let $V_1 = \Omega\setminus\overline{U}$ and $V_2 = B$. Using Proposition~\ref{prop:virtualop}(i), we have
	\begin{equation} \label{eq:range1}
		R(L_{V_1}(\lambda_1,\mu_1)) \subseteq R(\widehat{L}_{V_1}(\lambda_0,\mu_0)) = R(\widehat{L}_{V_1}(\lambda_1,\mu_1)),
	\end{equation}
	and
	\begin{equation} \label{eq:range2}
		R(\widehat{L}_{V_1\cup V_2}(\lambda_0,\mu_0)) = R(\widehat{L}_{V_1\cup V_2}(\lambda_1,\mu_1)).
	\end{equation}
	Using Proposition~\ref{prop:virtualop}(iii) gives
	\begin{equation*}
		\lim_{i\to\infty}\norm{\widehat{L}_{V_1}(\lambda_0,\mu_0)^*g_i}_{L^2(V_1)^{d\times d}}^2 = \lim_{i\to\infty}\int_{\Omega\setminus U} \frob{\widehat{\nabla}u_i}^2\,\di x = 0,
	\end{equation*}	
	which by \eqref{eq:range1} and Lemma~\ref{lemma:rangenorm} implies
	\begin{align}
		\lim_{i\to\infty}\int_{\Omega\setminus U} \frob{\widehat{\nabla}\widehat{u}_i}^2\,\di x &= \lim_{i\to\infty}\norm{\widehat{L}_{V_1}(\lambda_1,\mu_1)^*g_i}_{L^2(V_1)^{d\times d}}^2 = 0, \label{eq:range3} \\
		\lim_{i\to\infty}\int_{\Omega\setminus U} \abs{\nabla\cdot \widehat{u}_i}^2\,\di x &= \lim_{i\to\infty}\norm{L_{V_1}(\lambda_1,\mu_1)^*g_i}_{L^2(V_1)}^2 = 0. \notag
	\end{align}
	Moreover, \eqref{eq:range2}, Proposition~\ref{prop:virtualop}(iii), and Lemma~\ref{lemma:rangenorm} give
	\begin{align*}
		\norm{\widehat{L}_{V_1}(\lambda_1,\mu_1)^*g_i}_{L^2(V_1)^{d\times d}}^2 + \norm{\widehat{L}_{V_2}(\lambda_1,\mu_1)^*g_i}_{L^2(V_2)^{d\times d}}^2 & =  \norm{\widehat{L}_{V_1\cup V_2}(\lambda_1,\mu_1)^*g_i}_{L^2(V_1\cup V_2)^{d\times d}}^2 \\
		&\geq K\norm{\widehat{L}_{V_1\cup V_2}(\lambda_0,\mu_0)^*g_i}_{L^2(V_1\cup V_2)^{d\times d}}^2 \\
		&\geq K\norm{\widehat{L}_{V_2}(\lambda_0,\mu_0)^*g_i}_{L^2(V_2)^{d\times d}}^2,
	\end{align*}
	where the lower bound tends to $\infty$ for $i\to\infty$. Combined with \eqref{eq:range3} and Proposition~\ref{prop:virtualop}(iii), this yields
	\begin{equation*}
		\lim_{i\to\infty}\int_{B} \frob{\widehat{\nabla}\widehat{u}_i}^2\,\di x = \lim_{i\to\infty}\norm{\widehat{L}_{V_2}(\lambda_1,\mu_1)^*g_i}_{L^2(V_2)^{d\times d}}^2 = \infty. \qedhere
	\end{equation*}
\end{proof}
Next we show that the localization of Lemma~\ref{lemma:locpot} can be fully transferred to the case with extreme inclusions, provided the extreme inclusions are in the part of the domain where the localized potentials tend to zero. Interestingly, the result is stronger than for finite perturbations.
\begin{lemma} \label{lemma:locpotextreme}
	Let $U$ and $B$ be as in Lemma~\ref{lemma:locpot}. Suppose $u_i = u_{g_i}^{\lambda_0,\mu_0}$ satisfies \eqref{eq:locpotblowup} and \eqref{eq:locpotzero}. Assume that $C \subset \Omega\setminus\overline{U}$ and define $\widehat{u}_i = Eu_{g_i}^{\lambda,\mu}$. Then
	\begin{align*}
		\lim_{i\to\infty}\int_B \abs{\nabla\cdot \widehat{u}_i}^2\,\di x = \lim_{i\to\infty}\int_B \frob{\widehat{\nabla}\widehat{u}_i}^2\,\di x &= \infty, \\
		\lim_{i\to\infty}\int_{\Omega\setminus U} \abs{\nabla\cdot \widehat{u}_i}^2\,\di x = \lim_{i\to\infty}\int_{\Omega\setminus U} \frob{\widehat{\nabla}\widehat{u}_i}^2\,\di x &= 0.
	\end{align*}
\end{lemma}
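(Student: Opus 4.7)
My plan is to extend the virtual measurement framework of Section~\ref{sec:virtualop} to the extreme setting and then repeat the range-comparison argument used in Lemma~\ref{lemma:locpottransf}. Since $C\subset\Omega\setminus\overline{U}$, neither the Lam\'e parameters nor the ambient space are perturbed inside $U$ where the background localized potentials concentrate, and this is what makes the transfer possible.

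First, for measurable $V\subseteq\Omega\setminus C_0$, I would define extreme virtual measurement operators $\widehat{L}_V(\lambda,\mu)\colon L^2(V)^{d\times d}\to L^2(\GammaN)^d$ and $L_V(\lambda,\mu)\colon L^2(V)\to L^2(\GammaN)^d$ by exactly the formulas \eqref{eq:virtopprob1}--\eqref{eq:virtopprob2}, with the variational problems now posed on $\H_{C_0}^{C_\infty}$ equipped with $\inner{\,\cdot\,,\,\cdot\,}_{\lambda,\mu}$. The same manipulation as in the proof of Proposition~\ref{prop:virtualop}(iii) yields the adjoint identities
\begin{equation*}
\widehat{L}_V(\lambda,\mu)^*g=\widehat{\nabla}u_g^{\lambda,\mu}|_V, \qquad L_V(\lambda,\mu)^*g=\nabla\cdot u_g^{\lambda,\mu}|_V.
\end{equation*}
With $V_1=(\Omega\setminus\overline{U})\setminus C_0$ and $V_2=B$, I would then establish range inclusions and equalities analogous to Proposition~\ref{prop:virtualop}(i):
\begin{equation*}
R(\widehat{L}_{V_1}(\lambda,\mu))\subseteq R(\widehat{L}_{\Omega\setminus\overline{U}}(\lambda_0,\mu_0)), \quad R(\widehat{L}_{V_1\cup V_2}(\lambda_0,\mu_0))=R(\widehat{L}_{V_1\cup V_2}(\lambda,\mu)),
\end{equation*}
plus the corresponding statements for the scalar $L$-operators. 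The transfer between $\H_{C_0}^{C_\infty}$ and $\H_\emptyset^\emptyset$ is accomplished by combining the extension $E$ from Definition~\ref{def:extension} with the projection $P$ from Section~\ref{sec:projection}, coupled with an auxiliary Lax--Milgram problem on a zero-mean / rigid-motion quotient in the spirit of the proof of Proposition~\ref{prop:virtualop}(i), using crucially that all parameter perturbations lie inside $V_1$.

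Applying Lemma~\ref{lemma:rangenorm} to the first inclusion (together with the scalar analogue) and combining with \eqref{eq:locpotzero} gives $\int_{V_1}(\frob{\widehat{\nabla}u_{g_i}^{\lambda,\mu}}^2+\abs{\nabla\cdot u_{g_i}^{\lambda,\mu}}^2)\,\di x\to 0$. The reverse-direction range equalities together with \eqref{eq:locpotblowup} give $\int_{V_1\cup V_2}(\frob{\widehat{\nabla}u_{g_i}^{\lambda,\mu}}^2+\abs{\nabla\cdot u_{g_i}^{\lambda,\mu}}^2)\,\di x\to\infty$, and subtracting the $V_1$-integrals leaves both $\int_{V_2}\frob{\widehat{\nabla}u_{g_i}^{\lambda,\mu}}^2\,\di x\to\infty$ and $\int_{V_2}\abs{\nabla\cdot u_{g_i}^{\lambda,\mu}}^2\,\di x\to\infty$. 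Since $B\subset U$ and $B\cap C_0=\emptyset$, we have $\widehat{u}_i=u_{g_i}^{\lambda,\mu}$ on $B$, so the blow-up transfers to $\widehat{u}_i$. The vanishing on $\Omega\setminus U$ extends from $V_1$ to $C_0$ via continuous dependence of the Dirichlet extension in Definition~\ref{def:extension} on its boundary data, followed by the trace theorem applied to a thin collar of $V_1$ abutting $\partial C_0$.

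The main obstacle is the range-equality step. In Proposition~\ref{prop:virtualop}(i) only the Lam\'e parameters vary within the fixed ambient space $\H_\emptyset^\emptyset$, whereas here the Hilbert space itself shrinks from $\H_\emptyset^\emptyset$ to $\H_{C_0}^{C_\infty}$, so solutions must be transported by $E$ and $P$ before the auxiliary variational problem can even be set up. A closely related subtlety is that, unlike Lemma~\ref{lemma:locpottransf}, one now also needs a range equality for the scalar operator $L$; this is exactly what delivers the \emph{stronger} conclusion that the divergence (and not only the symmetric gradient) blows up on $B$, and it should rely on the rigid-motion constraint in $C_\infty$ together with the pointwise bound \eqref{eq:nablabnd}.
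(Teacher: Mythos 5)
There is a genuine gap, and it sits exactly at the point you flag as the ``main obstacle'': the divergence blow-up on $B$ cannot be obtained by the range-comparison machinery. To transfer $\int_{V_1\cup V_2}\abs{\nabla\cdot u_{g_i}^{\lambda_0,\mu_0}}^2\,\di x\to\infty$ into $\int_{V_1\cup V_2}\abs{\nabla\cdot u_{g_i}^{\lambda,\mu}}^2\,\di x\to\infty$ via Lemma~\ref{lemma:rangenorm}, you would need the scalar range inclusion $R(L_{V_1\cup V_2}(\lambda_0,\mu_0))\subseteq R(L_{V_1\cup V_2}(\lambda,\mu))$. But the remark after Proposition~\ref{prop:virtualop} explains precisely why no range \emph{equality} (or inclusion with $L$ on the right-hand side) is available for the scalar operator: the auxiliary bilinear form $\inner{\nabla\cdot h,\nabla\cdot v}_{L^2(V)}$ is not an inner product, and restricting to the complement of the divergence-free subspace destroys the norm equivalence needed for Lax--Milgram. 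The only scalar statement in Proposition~\ref{prop:virtualop}(i) is $R(L_V)\subseteq R(\widehat{L}_V)$, which via Lemma~\ref{lemma:rangenorm} bounds the divergence \emph{from above} by a symmetric gradient --- the wrong direction for a blow-up. Your suggestion that the rigid-motion constraint in $C_\infty$ together with \eqref{eq:nablabnd} rescues this does not work: \eqref{eq:nablabnd} again only bounds $\abs{\nabla\cdot v}$ by $\frob{\widehat{\nabla}v}$, and $B\Subset U$ is disjoint from $C$, so the constraint on $C_\infty$ says nothing about the divergence on $B$. This is exactly why Lemma~\ref{lemma:locpottransf} is stated \emph{without} the divergence blow-up; your route would at best reproduce that weaker conclusion, not the full statement of Lemma~\ref{lemma:locpotextreme}. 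A secondary gap is that the claimed range identities between operators built on $\H_\emptyset^\emptyset$ and on $\H_{C_0}^{C_\infty}$ are asserted but not proved; since the test space itself changes, this is not a routine adaptation of Proposition~\ref{prop:virtualop}(i).

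The paper's proof avoids ranges entirely and is a direct perturbation argument, which is what makes the stronger conclusion possible. By Lemma~\ref{lemma:altcharacterization}, $u_{g_i}^{\lambda,\mu}=P(u_i|_{\Omega\setminus C_0}-w_i)$, where the Neumann data of $w_i$ on $\partial C_0$ is controlled (through an $H_{\textup{div}}$ trace estimate as in \eqref{eq:firstterm4}) by $\norm{\widehat{\nabla}u_i}_{L^2(\Omega\setminus(U\cup C_0))^{d\times d}}\to 0$, so $w_i\to 0$ in $H^1(\Omega\setminus C_0)^d$; and $\norm{P^\perp v_i}_{\lambda,\mu}\leq K\norm{\widehat{\nabla}v_i}_{L^2(C_\infty)^{d\times d}}\to 0$ by Lemma~\ref{lemma:perp} since $C_\infty\subset\Omega\setminus(\overline{U}\cup C_0)$. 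Hence $\widehat{u}_i-u_i\to 0$ in $H^1$ on $\Omega\setminus C_0$, and \emph{both} the divergence and symmetric-gradient limits of $u_i$ transfer verbatim to $\widehat{u}_i$; the behaviour on $C_0$ is then handled, as you correctly anticipated, by continuous dependence of the Dirichlet extension on its (quotient-space) boundary data. If you want to salvage your write-up, replace the range argument by this explicit decomposition; the virtual-measurement operators are the right tool for Lemma~\ref{lemma:locpottransf} but not here.
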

\begin{proof}
	We start by using Lemma~\ref{lemma:altcharacterization} to write
	\begin{equation*}
		\widehat{v}_i = u_{g_i}^{\lambda,\mu} = Pv_i,
	\end{equation*}
	with $v_i = u_i|_{\Omega\setminus C_0} - w_i$ and $w_i\in H^1(\Omega\setminus C_0)^d$ being the solution to the auxiliary problem in Lemma~\ref{lemma:altcharacterization} (with $g$ replaced by $g_i$).
	
	To respect jump-conditions across $\partial C_0$, we use ``ext'' to indicate a trace taken on $\partial C_0$ from within $\Omega\setminus C_0$. We now pick an open set $V$ such that $C_0 \subset \Omega\setminus\overline{V}\subset \Omega\setminus\overline{U}$ and such that $\partial(\Omega\setminus\overline{V})$ is Lipschitz continuous (which is not guaranteed by using $U$). By the continuous dependence of $w_i$ on its Neumann trace and a bound similar to \eqref{eq:firstterm4} (involving $H_\textup{div}(\Omega\setminus(\overline{V}\cup C_0);\R^{d\times d})$ instead of $H_\textup{div}(C_0^\circ;\R^{d\times d})$),
	\begin{align}
		\norm{\widehat{\nabla}w_i}_{L^2(\Omega\setminus C_0)^{d\times d}} &\leq K\norm{w_i}_{H^1(\Omega\setminus C_0)^d} \notag\\
		&\leq K\norm{\bigl(\lambda_0(\nabla\cdot u_i)I+2\mu_0\widehat{\nabla}u_i\bigr)_{\textup{ext}}n}_{H^{-1/2}(\partial C_0)^d} \notag\\
		&\leq K\norm{\widehat{\nabla}u_i}_{L^2(\Omega\setminus(U\cup C_0))^{d\times d}} \to 0 \text{ for } i\to\infty, \label{eq:locpotext1}
	\end{align}
	where the limit is a consequence of \eqref{eq:locpotzero}. Hence, by \eqref{eq:nablabnd} and \eqref{eq:locpotext1}, the limiting behaviors of $\nabla\cdot v_i$ and $\widehat{\nabla}v_i$ coincide with those of $\nabla\cdot u_i|_{\Omega\setminus C_0}$ and $\widehat{\nabla}u_i|_{\Omega\setminus C_0}$. From \eqref{eq:locpotblowup} and \eqref{eq:locpotzero}, we thus obtain
	\begin{align}
		\lim_{i\to\infty}\int_B \abs{\nabla\cdot v_i}^2\,\di x = \lim_{i\to\infty}\int_B \frob{\widehat{\nabla}v_i}^2\,\di x &= \infty, \label{eq:locpotext2} \\
		\lim_{i\to\infty}\int_{\Omega\setminus(U\cup C_0)} \abs{\nabla\cdot v_i}^2\,\di x = \lim_{i\to\infty}\int_{\Omega\setminus(U\cup C_0)} \frob{\widehat{\nabla}v_i}^2\,\di x &= 0.  \label{eq:locpotext3}
	\end{align}
	Using Lemma~\ref{lemma:perp},
	\begin{equation}
		\norm{\widehat{\nabla}P^\perp v_i}_{L^2(\Omega\setminus C_0)^{d\times d}} \leq K\norm{P^\perp v_i}_{\lambda,\mu} \leq K\norm{\widehat{\nabla}v_i}_{L^2(C_\infty)^{d\times d}}\to 0 \text{ for } i\to\infty, \label{eq:locpotext4}
	\end{equation}
	where the limit is due to \eqref{eq:locpotext3} and the set inclusion $C_\infty\subset \Omega\setminus(\overline{U}\cup C_0)$. Since $\widehat{v}_i = v_i - P^\perp v_i$, the equations \eqref{eq:locpotext2}--\eqref{eq:locpotext4} imply
	\begin{align*}
		\lim_{i\to\infty}\int_B \abs{\nabla\cdot \widehat{v}_i}^2\,\di x = \lim_{i\to\infty}\int_B \frob{\widehat{\nabla}\widehat{v}_i}^2\,\di x &= \infty, \\
		\lim_{i\to\infty}\int_{\Omega\setminus(U\cup C_0)} \abs{\nabla\cdot \widehat{v}_i}^2\,\di x = \lim_{i\to\infty}\int_{\Omega\setminus(U\cup C_0)} \frob{\widehat{\nabla}\widehat{v}_i}^2\,\di x &= 0.  
	\end{align*}
	As $\widehat{u}_i = E\widehat{v}_i$, it holds that $\widehat{u}_i|_{\Omega\setminus C_0} = \widehat{v}_i$. What remains to be investigated is the behavior of $\widehat{u}_i|_{C_0}$.
	
	As $\widehat{u}_i|_{\Omega\setminus(\overline{U}\cup C_0)}$ tends to zero in the quotient space $(H^1(\Omega\setminus(\overline{U}\cup C_0))/\R)^d$, the trace $\widehat{u}_i|_{\partial C_0}$ tends to zero in $(H^{1/2}(\partial C_0)/\R)^d$. We also have $\nabla\widehat{u}_i = \nabla\widetilde{u}_i$ in $C_0^\circ$, where $\widetilde{u}_i$ is the unique solution in $(H^1(C_0^\circ)/\R)^d$ to 
	\begin{align*}
		\nabla\cdot \bigl(\lambda_0(\nabla\cdot \widetilde{u}_i)I+2\mu_0\widehat{\nabla}\widetilde{u}_i\bigr) &= 0 \text{ in } C_0^\circ, \\
		\widetilde{u}_i &= \widehat{u}_i \text{ on } \partial C_0.
	\end{align*}
	Here the Dirichlet data in $(H^{1/2}(\partial C_0)/\R)^d$ is the equivalence class identified by the element $\widehat{u}_i|_{\partial C_0} \in H^{1/2}(\partial C_0)^d$. By the continuous dependence on the Dirichlet data, $\nabla \widehat{u}_i|_{C_0^\circ}$ thus tends to zero in $L^2$, implying that
	\begin{equation*}
		\lim_{i\to\infty}\int_{C_0} \abs{\nabla\cdot \widehat{u}_i}^2\,\di x = \lim_{i\to\infty}\int_{C_0} \frob{\widehat{\nabla}\widehat{u}_i}^2\,\di x = 0. \qedhere
	\end{equation*}
\end{proof}
\begin{remark} \label{remark:loc}
	Suppose we have just the results
	\begin{align*}
		\lim_{i\to\infty}\int_B \frob{\widehat{\nabla}u_i}^2\,\di x &= \infty, \\
		\lim_{i\to\infty}\int_{\Omega\setminus U} \abs{\nabla\cdot u_i}^2\,\di x &= \lim_{i\to\infty}\int_{\Omega\setminus U} \frob{\widehat{\nabla}u_i}^2\,\di x = 0
	\end{align*}
	at our disposal in the proof of Lemma~\ref{lemma:locpotextreme}. By the same proof, we still get
	\begin{align*}
		\lim_{i\to\infty}\int_B \frob{\widehat{\nabla}\widehat{u}_i}^2\,\di x &= \infty, \\
		\lim_{i\to\infty}\int_{\Omega\setminus U} \abs{\nabla\cdot \widehat{u}_i}^2\,\di x &= \lim_{i\to\infty}\int_{\Omega\setminus U} \frob{\widehat{\nabla}\widehat{u}_i}^2\,\di x = 0.
	\end{align*}
	Thus we can simultaneously localize for a chain of coefficients: First use Lemma~\ref{lemma:locpot}, then Lemma~\ref{lemma:locpottransf} to localize for the coefficients $(\lambda_1,\mu_1)$ (thus lacking the blow-up in $B$ for the divergence), and finally Lemma~\ref{lemma:locpotextreme} with $(\lambda_1,\mu_1)$ in place of $(\lambda_0,\mu_0)$ both in the statement of the lemma and in the definition of the pair $(\lambda,\mu)$.
\end{remark}

\section{Proof of Theorem~\ref{thm:monoext}} \label{sec:monoextproof}

Recall from Definition~\ref{def:posneginc} that $D = \Dm\cup\Dp \cup D_0\cup D_\infty$ and $\lambda$ and $\mu$ are on the form
\begin{equation*} 
	\lambda = \begin{cases}
		\lambda_0 & \text{in } \Omega\setminus D \\
		\lambda_- & \text{in } \Dm \\
		\lambda_+ & \text{in } \Dp \\
		0 & \text{in } D_0 \\
		\infty & \text{in } D_\infty
	\end{cases}
	\quad\text{and}\quad
	\mu = \begin{cases}
		\mu_0 & \text{in } \Omega\setminus D \\
		\mu_- & \text{in } \Dm \\
		\mu_+ & \text{in } \Dp \\
		0 & \text{in } D_0 \\
		\infty & \text{in } D_\infty,
	\end{cases}
\end{equation*}
where $\lambda_- \leq \lambda_0 \leq \lambda_+$ and $\mu_-\leq\mu_0\leq\mu_+$ (all functions in $L^\infty_+(\Omega)$) and satisfy Assumption~\ref{assump:technical}.

\subsection*{Proof of ``$\boldsymbol{D\subseteq C \Rightarrow \Lambda_{C}^{\emptyset} \geq \Lambda(\lambda,\mu) \geq \Lambda_{\emptyset}^{C}}$''}

Consider the truncated Lam\'e parameters:
\begin{equation*} 
	\lambda_\epsilon = \begin{cases}
		\lambda_0 & \text{in } \Omega\setminus D \\
		\lambda_- & \text{in } \Dm \\
		\lambda_+ & \text{in } \Dp \\
		\epsilon\lambda_0 & \text{in } D_0 \\
		\epsilon^{-1}\lambda_0 & \text{in } D_\infty
	\end{cases}
	\quad\text{and}\quad
	\mu_\epsilon = \begin{cases}
		\mu_0 & \text{in } \Omega\setminus D \\
		\mu_- & \text{in } \Dm \\
		\mu_+ & \text{in } \Dp \\
		\epsilon\mu_0 & \text{in } D_0 \\
		\epsilon^{-1}\mu_0 & \text{in } D_\infty,
	\end{cases}
\end{equation*}
and likewise for the test operators:
\begin{equation*} 
	\widehat{\lambda}_\epsilon = \begin{cases}
		\lambda_0 & \text{in } \Omega\setminus C \\
		\epsilon\lambda_0 & \text{in } C
	\end{cases}
	\quad\text{and}\quad
	\widehat{\mu}_\epsilon = \begin{cases}
		\mu_0 & \text{in } \Omega\setminus C \\
		\epsilon\mu_0 & \text{in } C,
	\end{cases}
\end{equation*}
and
\begin{equation*} 
	\widetilde{\lambda}_\epsilon = \begin{cases}
		\lambda_0 & \text{in } \Omega\setminus C \\
		\epsilon^{-1}\lambda_0 & \text{in } C
	\end{cases}
	\quad\text{and}\quad
	\widetilde{\mu}_\epsilon = \begin{cases}
		\mu_0 & \text{in } \Omega\setminus C \\
		\epsilon^{-1}\mu_0 & \text{in } C.
	\end{cases}
\end{equation*}
As $D\subseteq C$, for small enough $\epsilon>0$ we have $\widehat{\lambda}_\epsilon\leq\lambda_\epsilon\leq \widetilde{\lambda}_\epsilon$ and $\widehat{\mu}_\epsilon\leq\mu_\epsilon\leq \widetilde{\mu}_\epsilon$. Lemma~\ref{lemma:monoback} thus implies $\Lambda(\widehat{\lambda}_\epsilon,\widehat{\mu}_\epsilon) \geq \Lambda(\lambda_\epsilon,\mu_\epsilon) \geq \Lambda(\widetilde{\lambda}_\epsilon,\widetilde{\mu}_\epsilon)$. Now the operator norm convergence for $\epsilon\to 0$ from Theorem~\ref{thm:convergence} gives $\Lambda_C^\emptyset \geq \Lambda(\lambda,\mu) \geq \Lambda_{\emptyset}^{C}$.

\subsection*{Proof of ``$\boldsymbol{\Lambda_{C}^{\emptyset} \geq \Lambda(\lambda,\mu) \geq \Lambda_{\emptyset}^{C} \Rightarrow D\subseteq C}$''}

Assume that $D\not\subseteq C$. Since $D,C\in\mathcal{A}$ and by Assumption~\ref{assump:technical}, there exists an open ball $B\subset D\setminus C$ and a relatively open connected set $U\subset \overline{\Omega}$ with $\partial(\Omega\setminus \overline{U})$ Lipschitz continuous, such that $U$ intersects $\GammaN$, $B\Subset U$, $\overline{U}\cap C = \emptyset$, and one of four options holds:
\begin{alignat*}{2}
	\textup{Case A: }& \overline{U}\cap D \subseteq D_\infty, \quad
	&\textup{Case B: }& \overline{U}\cap D \subseteq \Dp,  \\
	\textup{Case C: }& \overline{U}\cap D \subseteq D_0, \quad &\textup{Case D: }& \overline{U}\cap D \subseteq \Dm.
\end{alignat*}
We consider these cases separately. 

As we shall see below, the combination of these cases proves that $D\not\subseteq C$ implies either $\Lambda_{C}^{\emptyset} \not\geq \Lambda(\lambda,\mu)$ or $\Lambda(\lambda,\mu) \not\geq \Lambda_{\emptyset}^{C}$, which is the contrapositive formulation of the statement we are proving. In particular, cases A and B will contradict the second operator inequality, while cases C and D will contradict the first operator inequality. 

\subsection*{Case A}

Define
\begin{equation*} 
	\lambda_1 = \begin{cases}
		\lambda_0 & \text{in } \Omega\setminus (\Dm\cup\Dp) \\
		\lambda_- & \text{in } \Dm \\
		\lambda_+ & \text{in } \Dp 
	\end{cases}
	\quad\text{and}\quad
	\mu_1 = \begin{cases}
		\mu_0 & \text{in } \Omega\setminus (\Dm\cup\Dp) \\
		\mu_- & \text{in } \Dm \\
		\mu_+ & \text{in } \Dp,
	\end{cases}
\end{equation*}
and
\begin{equation*} 
	\lambda_2 = \begin{cases}
		\lambda_0 & \text{in } \Omega\setminus (\Dm\cup\Dp\cup D_0) \\
		\lambda_- & \text{in } \Dm \\
		\lambda_+ & \text{in } \Dp \\
		0 & \text{in } D_0
	\end{cases}
	\quad\text{and}\quad
	\mu_2 = \begin{cases}
		\mu_0 & \text{in } \Omega\setminus (\Dm\cup\Dp\cup D_0) \\
		\mu_- & \text{in } \Dm \\
		\mu_+ & \text{in } \Dp \\
		0 & \text{in } D_0.
	\end{cases}
\end{equation*}
We shorten the notation and denote $\Lambda = \Lambda(\lambda,\mu)$ and $\Lambda_j = \Lambda(\lambda_j,\mu_j)$ for $j\in\{0,1,2\}$. Now we can use Lemmas~\ref{lemma:locpot}--\ref{lemma:locpotextreme} (cf.~Remark~\ref{remark:loc}) to pick a sequence $(g_i)$ in $L^2(\GammaN)^d$, such that $u_i = u_{g_i}^{\lambda_0,\mu_0}$, $\widehat{u}_i = u_{g_i}^{\lambda_1,\mu_1}$, and $\widetilde{u}_i = Eu_{g_i}^{\lambda_2,\mu_2}$ satisfy
\begin{equation*}
	\lim_{i\to\infty}\int_B \frob{\widehat{\nabla}\widetilde{u}_i}^2\,\di x = \infty,
\end{equation*}
\begin{equation*}
	\lim_{i\to\infty}\int_{\Omega\setminus U} \frob{\widehat{\nabla}u_i}^2\,\di x = \lim_{i\to\infty}\int_{\Omega\setminus U} \frob{\widehat{\nabla}\widehat{u}_i}^2\,\di x = \lim_{i\to\infty}\int_{\Omega\setminus U} \frob{\widehat{\nabla}\widetilde{u}_i}^2\,\di x = 0,
\end{equation*}
and
\begin{equation*}
	\lim_{i\to\infty}\int_{\Omega\setminus U} \abs{\nabla\cdot u_i}^2\,\di x = \lim_{i\to\infty}\int_{\Omega\setminus U} \abs{\nabla\cdot\widehat{u}_i}^2\,\di x = \lim_{i\to\infty}\int_{\Omega\setminus U} \abs{\nabla\cdot\widetilde{u}_i}^2\,\di x = 0.
\end{equation*}
We may write
\begin{equation} \label{eq:caseA}
	\Lambda-\Lambda_{\emptyset}^C = (\Lambda - \Lambda_2) + (\Lambda_2-\Lambda_1) + (\Lambda_1-\Lambda_0) + (\Lambda_0-\Lambda_\emptyset^C),
\end{equation}
and separately estimate each of the above differences. Using Lemmas~\ref{lemma:monoback}--\ref{lemma:monoC0} gives:
\begin{align}
	\inner{(\Lambda - \Lambda_2)g_i,g_i} &\leq  -\int_{D_\infty}2\mu_0\frob{\widehat{\nabla}\widetilde{u}_i}^2\,\di x, \label{eq:caseA1}\\
	\inner{(\Lambda_2 - \Lambda_1)g_i,g_i} &\leq  \int_{D_0}\lambda_0\abs{\nabla\cdot\widetilde{u}_i}^2 + 2\mu_0\frob{\widehat{\nabla}\widetilde{u}_i}^2\,\di x, \label{eq:caseA2}\\
	\inner{(\Lambda_1-\Lambda_0)g_i,g_i} &\leq \int_{\Dm\cup\Dp} (\lambda_0-\lambda_1)\abs{\nabla\cdot\widehat{u}_i}^2 + 2(\mu_0-\mu_1)\frob{\widehat{\nabla}\widehat{u}_i}^2\,\di x, \label{eq:caseA3}\\
	\inner{(\Lambda_0-\Lambda_\emptyset^C)g_i,g_i} &\leq K\int_C \frob{\widehat{\nabla}u_i}^2\,\di x. \label{eq:caseA4}
\end{align}
The right-hand sides of \eqref{eq:caseA2}--\eqref{eq:caseA4} tend to zero as $C\cup D_0\cup\Dm\cup\Dp \subset \Omega\setminus\overline{U}$, while \eqref{eq:caseA1} tends to minus infinity for $i\to\infty$ as $B\subset D_\infty$. From \eqref{eq:caseA} we thus have
\begin{equation*}
	\lim_{i\to\infty}\inner{(\Lambda(\lambda,\mu)-\Lambda_{\emptyset}^C)g_i,g_i} = -\infty,
\end{equation*}
which means that $\Lambda(\lambda,\mu)\not\geq \Lambda_{\emptyset}^C$.

\subsection*{Case B}

Define
\begin{equation*} 
	\lambda_1 = \begin{cases}
		\lambda_0 & \text{in } \Omega\setminus D_0 \\
		0 & \text{in } D_0 
	\end{cases}
	\quad\text{and}\quad
	\mu_1 = \begin{cases}
		\mu_0 & \text{in } \Omega\setminus D_0 \\
		0 & \text{in } D_0,
	\end{cases}
\end{equation*}
and
\begin{equation*} 
	\lambda_2 = \begin{cases}
		\lambda_0 & \text{in } \Omega\setminus (D_0\cup D_\infty) \\
		0 & \text{in } D_0  \\
		\infty & \text{in } D_\infty
	\end{cases}
	\quad\text{and}\quad
	\mu_2 = \begin{cases}
		\mu_0 & \text{in } \Omega\setminus (D_0\cup D_\infty) \\
		0 & \text{in } D_0 \\
		\infty & \text{in } D_\infty.
	\end{cases}
\end{equation*}
We shorten the notation and denote $\Lambda = \Lambda(\lambda,\mu)$ and $\Lambda_j = \Lambda(\lambda_j,\mu_j)$ for $j\in\{0,1,2\}$. Now we can use Lemma~\ref{lemma:locpot} and Lemma~\ref{lemma:locpotextreme} to pick a sequence $(g_i)$ in $L^2(\GammaN)^d$, such that $u_i = u_{g_i}^{\lambda_0,\mu_0}$, $\widehat{u}_i = Eu_{g_i}^{\lambda_1,\mu_1}$, and $\widetilde{u}_i = Eu_{g_i}^{\lambda_2,\mu_2}$ satisfy
\begin{equation*}
	\lim_{i\to\infty}\int_B \frob{\widehat{\nabla}\widetilde{u}_i}^2\,\di x = \infty,
\end{equation*}
\begin{equation*}
	\lim_{i\to\infty}\int_{\Omega\setminus U} \frob{\widehat{\nabla}u_i}^2\,\di x = \lim_{i\to\infty}\int_{\Omega\setminus U} \frob{\widehat{\nabla}\widehat{u}_i}^2\,\di x = \lim_{i\to\infty}\int_{\Omega\setminus U} \frob{\widehat{\nabla}\widetilde{u}_i}^2\,\di x = 0,
\end{equation*}
and
\begin{equation*}
	\lim_{i\to\infty}\int_{\Omega\setminus U} \abs{\nabla\cdot u_i}^2\,\di x = \lim_{i\to\infty}\int_{\Omega\setminus U} \abs{\nabla\cdot\widehat{u}_i}^2\,\di x = \lim_{i\to\infty}\int_{\Omega\setminus U} \abs{\nabla\cdot\widetilde{u}_i}^2\,\di x = 0.
\end{equation*}
We may write
\begin{equation} \label{eq:caseB}
	\Lambda-\Lambda_{\emptyset}^C = (\Lambda - \Lambda_2) + (\Lambda_2-\Lambda_1) + (\Lambda_1-\Lambda_0) + (\Lambda_0-\Lambda_\emptyset^C),
\end{equation}
and separately estimate each of the above of differences. Using Lemmas~\ref{lemma:monoback}--\ref{lemma:monoC0} gives:
\begin{align}
	\inner{(\Lambda - \Lambda_2)g_i,g_i} &\leq  \int_{\Dm\cup\Dp} \frac{\lambda_0}{\lambda}(\lambda_0-\lambda)\abs{\nabla\cdot \widetilde{u}_i}^2 + 2\frac{\mu_0}{\mu}(\mu_0-\mu)\frob{\widehat{\nabla}\widetilde{u}_i}^2\,\di x, \label{eq:caseB1}\\
	\inner{(\Lambda_2 - \Lambda_1)g_i,g_i} &\leq 0, \label{eq:caseB2}\\
	\inner{(\Lambda_1-\Lambda_0)g_i,g_i} &\leq \int_{D_0}\lambda_0\abs{\nabla\cdot\widehat{u}_i}^2+2\mu_0\frob{\widehat{\nabla}\widehat{u}_i}^2\,\di x, \label{eq:caseB3}\\
	\inner{(\Lambda_0-\Lambda_\emptyset^C)g_i,g_i} &\leq K\int_C \frob{\widehat{\nabla}u_i}^2\,\di x. \label{eq:caseB4}
\end{align}
The right-hand sides of \eqref{eq:caseB3}, \eqref{eq:caseB4}, and the part of the integral over $\Dm$ in \eqref{eq:caseB1} tend to zero as $C\cup D_0\cup \Dm \subset \Omega\setminus\overline{U}$. By Assumption~\ref{assump:technical} we may assume that $\inf_B(\mu_+-\mu_0)>0$. Since $B\subset \Dp$, $\lambda_+\geq\lambda_0$, and $\mu_+\geq \mu_0$, the right-hand side of \eqref{eq:caseB1} tends to minus infinity for $i\to\infty$. From \eqref{eq:caseB} we thus have
\begin{equation*}
	\lim_{i\to\infty}\inner{(\Lambda(\lambda,\mu)-\Lambda_{\emptyset}^C)g_i,g_i} = -\infty,
\end{equation*}
which means that $\Lambda(\lambda,\mu)\not\geq \Lambda_{\emptyset}^C$.

\subsection*{Case C}

Define
\begin{equation*} 
	\lambda_1 = \begin{cases}
		\lambda_0 & \text{in } \Omega\setminus (\Dm\cup\Dp) \\
		\lambda_- & \text{in } \Dm \\
		\lambda_+ & \text{in } \Dp 
	\end{cases}
	\quad\text{and}\quad
	\mu_1 = \begin{cases}
		\mu_0 & \text{in } \Omega\setminus (\Dm\cup\Dp) \\
		\mu_- & \text{in } \Dm \\
		\mu_+ & \text{in } \Dp,
	\end{cases}
\end{equation*}
and
\begin{equation*} 
	\lambda_2 = \begin{cases}
		\lambda_0 & \text{in } \Omega\setminus (\Dm\cup\Dp\cup D_\infty) \\
		\lambda_- & \text{in } \Dm \\
		\lambda_+ & \text{in } \Dp \\
		\infty & \text{in } D_\infty
	\end{cases}
	\quad\text{and}\quad
	\mu_2 = \begin{cases}
		\mu_0 & \text{in } \Omega\setminus (\Dm\cup\Dp\cup D_\infty) \\
		\mu_- & \text{in } \Dm \\
		\mu_+ & \text{in } \Dp \\
		\infty & \text{in } D_\infty.
	\end{cases}
\end{equation*}
Moreover, set
\begin{equation*} 
	\lambda_C = \begin{cases}
		\lambda_0 & \text{in } \Omega\setminus C \\
		0 & \text{in } C 
	\end{cases}
	\quad\text{and}\quad
	\mu_C = \begin{cases}
		\mu_0 & \text{in } \Omega\setminus C \\
		0 & \text{in } C.
	\end{cases}
\end{equation*}
We shorten the notation and denote $\Lambda = \Lambda(\lambda,\mu)$ and $\Lambda_j = \Lambda(\lambda_j,\mu_j)$ for $j\in\{0,1,2\}$. Now we can use Lemmas~\ref{lemma:locpot}--\ref{lemma:locpotextreme} (cf.~Remark~\ref{remark:loc}) to pick a sequence $(g_i)$ in $L^2(\GammaN)^d$, such that $u_i = u_{g_i}^{\lambda_0,\mu_0}$, $\widehat{u}_i = u_{g_i}^{\lambda_1,\mu_1}$, $\widetilde{u}_i = u_{g_i}^{\lambda_2,\mu_2}$, and $\breve{u}_i = Eu_{g_i}^{\lambda_C,\mu_C}$ satisfy
\begin{equation*}
	\lim_{i\to\infty}\int_B \frob{\widehat{\nabla}\widetilde{u}_i}^2\,\di x = \infty,
\end{equation*}
\begin{equation*}
	\lim_{i\to\infty}\int_{\Omega\setminus U} \frob{\widehat{\nabla}u_i}^2\,\di x = \lim_{i\to\infty}\int_{\Omega\setminus U} \frob{\widehat{\nabla}\widehat{u}_i}^2\,\di x = \lim_{i\to\infty}\int_{\Omega\setminus U} \frob{\widehat{\nabla}\widetilde{u}_i}^2\,\di x = \lim_{i\to\infty}\int_{\Omega\setminus U} \frob{\widehat{\nabla}\breve{u}_i}^2\,\di x = 0,
\end{equation*}
and
\begin{equation*}
	\lim_{i\to\infty}\int_{\Omega\setminus U} \abs{\nabla\cdot u_i}^2\,\di x = \lim_{i\to\infty}\int_{\Omega\setminus U} \abs{\nabla\cdot\widehat{u}_i}^2\,\di x = \lim_{i\to\infty}\int_{\Omega\setminus U} \abs{\nabla\cdot\widetilde{u}_i}^2\,\di x = \lim_{i\to\infty}\int_{\Omega\setminus U} \abs{\nabla\cdot\breve{u}_i}^2\,\di x = 0.
\end{equation*}
We may write
\begin{equation} \label{eq:caseC}
	\Lambda_C^\emptyset - \Lambda = (\Lambda_C^\emptyset - \Lambda_0) + (\Lambda_0-\Lambda_1) + (\Lambda_1-\Lambda_2) + (\Lambda_2-\Lambda),
\end{equation}
and separately estimate each of the above differences. Using Lemmas~\ref{lemma:monoback}--\ref{lemma:monoC0} gives:
\begin{align}
	\inner{(\Lambda_C^\emptyset - \Lambda_0)g_i,g_i} &\leq  \int_{C} \lambda_0\abs{\nabla\cdot \breve{u}_i}^2 + 2\mu_0\frob{\widehat{\nabla}\breve{u}_i}^2\,\di x, \label{eq:caseC1}\\
	\inner{(\Lambda_0 - \Lambda_1)g_i,g_i} &\leq  \int_{\Dm\cup\Dp} (\lambda_1-\lambda_0)\abs{\nabla\cdot u_i}^2 + 2(\mu_1-\mu_0)\frob{\widehat{\nabla}u_i}^2\,\di x, \label{eq:caseC2}\\
	\inner{(\Lambda_1-\Lambda_2)g_i,g_i} &\leq K\int_{D_\infty} \frob{\widehat{\nabla}\widehat{u}_i}^2\,\di x, \label{eq:caseC3}\\
	\inner{(\Lambda_2-\Lambda)g_i,g_i} &\leq -\int_{D_0} 2\mu_0\frob{\widehat{\nabla}\widetilde{u}_i}^2\,\di x. \label{eq:caseC4}
\end{align}
The right-hand sides of \eqref{eq:caseC1}--\eqref{eq:caseC3} tend to zero as $C\cup\Dm\cup\Dp\cup D_\infty \subset \Omega\setminus\overline{U}$, while \eqref{eq:caseC4} tends to minus infinity for $i\to\infty$ as $B\subset D_0$. From \eqref{eq:caseC} we thus have
\begin{equation*}
	\lim_{i\to\infty}\inner{(\Lambda_{C}^{\emptyset}  - \Lambda(\lambda,\mu))g_i,g_i} = -\infty,
\end{equation*}
which means that $\Lambda_{C}^{\emptyset} \not\geq \Lambda(\lambda,\mu)$.

\subsection*{Case D}

Define
\begin{equation*} 
	\lambda_1 = \begin{cases}
		\lambda_0 & \text{in } \Omega\setminus D_\infty \\
		\infty & \text{in } D_\infty
	\end{cases}
	\quad\text{and}\quad
	\mu_1 = \begin{cases}
		\mu_0 & \text{in } \Omega\setminus D_\infty \\
		\infty & \text{in } D_\infty,
	\end{cases}
\end{equation*}
and
\begin{equation*} 
	\lambda_2 = \begin{cases}
		\lambda_0 & \text{in } \Omega\setminus (D_0\cup D_\infty) \\
		0 & \text{in } D_0 \\
		\infty & \text{in } D_\infty
	\end{cases}
	\quad\text{and}\quad
	\mu_2 = \begin{cases}
		\mu_0 & \text{in } \Omega\setminus (D_0\cup D_\infty) \\
		0 & \text{in } D_0 \\
		\infty & \text{in } D_\infty.
	\end{cases}
\end{equation*}
Moreover, set
\begin{equation*} 
	\lambda_C = \begin{cases}
		\lambda_0 & \text{in } \Omega\setminus C \\
		0 & \text{in } C 
	\end{cases}
	\quad\text{and}\quad
	\mu_C = \begin{cases}
		\mu_0 & \text{in } \Omega\setminus C \\
		0 & \text{in } C.
	\end{cases}
\end{equation*}
We shorten the notation and denote $\Lambda = \Lambda(\lambda,\mu)$ and $\Lambda_j = \Lambda(\lambda_j,\mu_j)$ for $j\in\{0,1,2\}$. Now we can use Lemma~\ref{lemma:locpot} and Lemma~\ref{lemma:locpotextreme} to pick a sequence $(g_i)$ in $L^2(\GammaN)^d$, such that $u_i = u_{g_i}^{\lambda_0,\mu_0}$, $\widetilde{u}_i = Eu_{g_i}^{\lambda_2,\mu_2}$, and $\breve{u}_i = Eu_{g_i}^{\lambda_C,\mu_C}$ satisfy
\begin{equation*}
	\lim_{i\to\infty}\int_B \frob{\widehat{\nabla}\widetilde{u}_i}^2\,\di x = \infty,
\end{equation*}
\begin{equation*}
	\lim_{i\to\infty}\int_{\Omega\setminus U} \frob{\widehat{\nabla}u_i}^2\,\di x = \lim_{i\to\infty}\int_{\Omega\setminus U} \frob{\widehat{\nabla}\widetilde{u}_i}^2\,\di x = \lim_{i\to\infty}\int_{\Omega\setminus U} \frob{\widehat{\nabla}\breve{u}_i}^2\,\di x = 0,
\end{equation*}
and
\begin{equation*}
	\lim_{i\to\infty}\int_{\Omega\setminus U} \abs{\nabla\cdot u_i}^2\,\di x = \lim_{i\to\infty}\int_{\Omega\setminus U} \abs{\nabla\cdot\widetilde{u}_i}^2\,\di x = \lim_{i\to\infty}\int_{\Omega\setminus U} \abs{\nabla\cdot\breve{u}_i}^2\,\di x = 0.
\end{equation*}
We may write
\begin{equation} \label{eq:caseD}
	\Lambda_C^\emptyset - \Lambda = (\Lambda_C^\emptyset - \Lambda_0) + (\Lambda_0-\Lambda_1) + (\Lambda_1-\Lambda_2) + (\Lambda_2-\Lambda),
\end{equation}
and separately estimate each of the above differences. Using Lemmas~\ref{lemma:monoback}--\ref{lemma:monoC0} gives:
\begin{align}
	\inner{(\Lambda_C^\emptyset - \Lambda_0)g_i,g_i} &\leq  \int_{C} \lambda_0\abs{\nabla\cdot \breve{u}_i}^2 + 2\mu_0\frob{\widehat{\nabla}\breve{u}_i}^2\,\di x, \label{eq:caseD1}\\
	\inner{(\Lambda_0 - \Lambda_1)g_i,g_i} &\leq  K\int_{D_\infty} \frob{\widehat{\nabla}u_i}^2\,\di x, \label{eq:caseD2}\\
	\inner{(\Lambda_1-\Lambda_2)g_i,g_i} &\leq 0, \label{eq:caseD3}\\
	\inner{(\Lambda_2-\Lambda)g_i,g_i} &\leq \int_{\Dm\cup\Dp} (\lambda-\lambda_0)\abs{\nabla\cdot \widetilde{u}_i}^2 + 2(\mu-\mu_0)\frob{\widehat{\nabla}\widetilde{u}_i}^2\,\di x. \label{eq:caseD4}
\end{align}
The right-hand sides of \eqref{eq:caseD1}, \eqref{eq:caseD2}, and the part of the integral over $\Dp$ in \eqref{eq:caseD4} tend to zero as $C\cup D_\infty\cup\Dp \subset \Omega\setminus\overline{U}$. By Assumption~\ref{assump:technical} we may assume that $\sup_B(\mu_- - \mu_0)<0$. Since $B\subset \Dm$, $\lambda_- \leq\lambda_0$, and $\mu_- \leq \mu_0$, the right-hand side of \eqref{eq:caseD4} tends to minus infinity for $i\to\infty$. From \eqref{eq:caseD} we thus have
\begin{equation*}
	\lim_{i\to\infty}\inner{(\Lambda_{C}^{\emptyset}  - \Lambda(\lambda,\mu))g_i,g_i} = -\infty,
\end{equation*}
which means that $\Lambda_{C}^{\emptyset} \not\geq \Lambda(\lambda,\mu)$.

\section{Proof of Theorem~\ref{thm:monopos}} \label{sec:monoposproof}

\subsection*{Proof of ``$\boldsymbol{B\subset D \Rightarrow \Lambda_{\beta,B}\geq \Lambda_\emptyset^{D}}$''}

Define the following truncated Lam\'e parameters:
\begin{equation} \label{eq:epspos}
	\lambda_\epsilon = \begin{cases}
		\lambda_0 & \text{in } \Omega\setminus D\\
		\epsilon^{-1}\lambda_0 & \text{in } D
	\end{cases}
	\quad \text{and}
	\quad
	\mu_\epsilon = \begin{cases}
		\mu_0 & \text{in } \Omega\setminus D\\
		\epsilon^{-1}\mu_0 & \text{in } D.
	\end{cases}
\end{equation}
As $B\subset D$, for small enough $\epsilon>0$ we have $\lambda_0+\beta\chi_B \leq \lambda_\epsilon$ and $\mu_0+\beta\chi_B \leq \mu_\epsilon$. Thus Lemma~\ref{lemma:monoback} implies $\Lambda_{\beta,B}\geq \Lambda(\lambda_\epsilon,\mu_\epsilon)$. Now the convergence in operator norm as $\epsilon\to 0$ by Theorem~\ref{thm:convergence} implies $\Lambda_{\beta,B}\geq \Lambda_\emptyset^{D}$.

\subsection*{Proof of ``$\boldsymbol{B\subset D \Rightarrow \Lambda_0 + \DLambda_{\beta,B}\geq \Lambda_\emptyset^{D}}$''}

We use the notation
\begin{equation} \label{eq:kappa}
	\kappa = \min\{\inf(\lambda_0),\inf(\mu_0)\},
\end{equation}
and fix an $\alpha_0>0$ satisfying
\begin{equation} \label{eq:alpha0}
	\beta \leq \frac{\alpha_0}{1+\alpha_0}\kappa.
\end{equation}
Now we consider a small enough $\epsilon>0$ such that
\begin{equation} \label{eq:epsalph1}
	1-\epsilon \geq \frac{\alpha_0}{1+\alpha_0}.
\end{equation}
Let $\lambda_\epsilon$ and $\mu_\epsilon$ be as in \eqref{eq:epspos} and $u_0 = u_g^{\lambda_0,\mu_0}$. The set inclusion $B\subset D$ combined with Lemma~\ref{lemma:monoback} and \eqref{eq:kappa}--\eqref{eq:epsalph1} gives
\begin{align*}
	\inner{(\Lambda_0 + \DLambda_{\beta,B} - \Lambda(\lambda_\epsilon,\mu_\epsilon))g,g} \hspace{-3cm}& \\
	&\geq \int_D (1-\epsilon)\lambda_0\abs{\nabla u_0}^2 + 2(1-\epsilon)\mu_0\frob{\widehat{\nabla}u_0}^2\,\di x -\beta \int_B \abs{\nabla\cdot u_0}^2 + 2\frob{\widehat{\nabla}u_0}^2\, \di x \\
	&\geq \int_B [(1-\epsilon)\lambda_0-\beta]\abs{\nabla u_0}^2 + 2[(1-\epsilon)\mu_0-\beta]\frob{\widehat{\nabla}u_0}^2\,\di x \\
	&\geq \frac{\alpha_0}{1+\alpha_0}\int_B (\lambda_0-\kappa)\abs{\nabla u_0}^2 + 2(\mu_0-\kappa)\frob{\widehat{\nabla}u_0}^2\,\di x \geq 0.
\end{align*}
Using Theorem~\ref{thm:convergence}, the operator norm convergence for $\epsilon\to 0$ yields $\Lambda_0 + \DLambda_{\beta,B} \geq \Lambda_\emptyset^{D}$.

\subsection*{Proof of ``$\boldsymbol{\Lambda_{\beta,B}\geq \Lambda_\emptyset^{D} \Rightarrow B\subset D}$''}

Let $u_0 = u_g^{\lambda_0,\mu_0}$. Lemma~\ref{lemma:monoback} and Lemma~\ref{lemma:monoCinfty} give
\begin{align}
	\inner{(\Lambda_{\beta,B}-\Lambda_\emptyset^D)g,g} &= \inner{(\Lambda_{\beta,B}-\Lambda_0)g,g} + \inner{(\Lambda_0-\Lambda_\emptyset^D)g,g} \notag\\
	&\leq K\int_D \frob{\widehat{\nabla}u_0}^2\,\di x - \beta\int_B \frac{\lambda_0}{\lambda_0+\beta}\abs{\nabla\cdot u_0}^2 + 2\frac{\mu_0}{\mu_0+\beta}\frob{\widehat{\nabla}u_0}^2\,\di x. \label{eq:extramonopos}
\end{align}
Assume $B\not\subset D$ so that $B\setminus D$ contains an open ball $\widehat{B}$, in which we may concentrate localized potentials via a suitable set $U$. To this end, let $U\subset\overline{\Omega}$ be a relatively open connected set that intersects $\GammaN$, compactly contains $\widehat{B}$, and satisfies $\overline{U}\cap D = \emptyset$. Pick $(g_i)$ in $L^2_\diamond(\GammaN)^d$ via Lemma~\ref{lemma:locpot} such that $u_i = u_{g_i}^{\lambda_0,\mu_0}$ satisfies
\begin{equation*}
	\lim_{i\to\infty}\int_B \abs{\nabla\cdot u_i}^2\,\di x = \lim_{i\to\infty}\int_B \frob{\widehat{\nabla}u_i}^2\,\di x = \infty 
\end{equation*}
and
\begin{equation*}
	\lim_{i\to\infty}\int_D \abs{\nabla\cdot u_i}^2\,\di x = \lim_{i\to\infty}\int_D \frob{\widehat{\nabla}u_i}^2\,\di x = 0,
\end{equation*}
where we used the set inclusions $D\subset\Omega\setminus\overline{U}$ and $\widehat{B}\subseteq B$. Now \eqref{eq:extramonopos} gives
\begin{align*}
	\inner{(\Lambda_{\beta,B}-\Lambda_\emptyset^D)g_i,g_i} \hspace{-2cm}& \\
	&\leq K\int_D \frob{\widehat{\nabla}u_i}^2\,\di x - \beta\int_B \frac{\lambda_0}{\lambda_0+\beta}\abs{\nabla\cdot u_i}^2 + 2\frac{\mu_0}{\mu_0+\beta}\frob{\widehat{\nabla}u_i}^2\,\di x \to -\infty
\end{align*}
for $i\to\infty$. Thereby $B\not\subset D$ implies $\Lambda_{\beta,B} \not\geq \Lambda_\emptyset^{D}$, which is the contrapositive formulation of the statement we are proving.

\subsection*{Proof of ``$\boldsymbol{\Lambda_0 + \DLambda_{\beta,B}\geq \Lambda_\emptyset^{D} \Rightarrow B\subset D}$''}

Assume $B\not\subset D$. We pick $(g_i)$ in $L^2_\diamond(\GammaN)^d$ and $u_i = u_{g_i}^{\lambda_0,\mu_0}$ as in the proof of ``$\Lambda_{\beta,B}\geq \Lambda_\emptyset^{D} \Rightarrow B\subset D$''. Now Lemma~\ref{lemma:monoCinfty} gives
\begin{align*}
	\inner{(\Lambda_0 + \DLambda_{\beta,B} - \Lambda_\emptyset^{D})g_i,g_i} \hspace{-2cm}& \\
	&\leq K\int_D \frob{\widehat{\nabla}u_i}^2\,\di x -\beta \int_B \abs{\nabla\cdot u_i}^2 + 2\frob{\widehat{\nabla}u_i}^2\, \di x \to -\infty
\end{align*}
for $i\to\infty$. Thereby $B\not\subset D$ implies $\Lambda_0 + \DLambda_{\beta,B} \not\geq \Lambda_\emptyset^{D}$, which is the contrapositive formulation of the statement we are proving.

\section{Proof of Theorem~\ref{thm:mononeg}} \label{sec:mononegproof}

\subsection*{Proof of ``$\boldsymbol{B\subset D \Rightarrow \Lambda_D^{\emptyset} \geq \Lambda_{-\beta,B}}$''}

Define the following truncated Lam\'e parameters:
\begin{equation}  \label{eq:epsneg}
	\lambda_\epsilon = \begin{cases}
		\lambda_0 & \text{in } \Omega\setminus D\\
		\epsilon\lambda_0 & \text{in } D
	\end{cases}
	\quad \text{and}
	\quad
	\mu_\epsilon = \begin{cases}
		\mu_0 & \text{in } \Omega\setminus D\\
		\epsilon\mu_0 & \text{in } D.
	\end{cases}
\end{equation}
As $B\subset D$ and $\beta < \kappa$ by \eqref{eq:kappa}, for small enough $\epsilon>0$ we have $\lambda_\epsilon \leq \lambda_0-\beta\chi_B$ and $\mu_\epsilon \leq \mu_0-\beta\chi_B$. Thus Lemma~\ref{lemma:monoback} implies $\Lambda(\lambda_\epsilon,\mu_\epsilon) \geq \Lambda_{-\beta,B}$. Now the convergence in operator norm as $\epsilon\to 0$ guaranteed by Theorem~\ref{thm:convergence} implies $\Lambda_D^{\emptyset} \geq \Lambda_{-\beta,B}$.

\subsection*{Proof of ``$\boldsymbol{B\subset D \Rightarrow \Lambda_D^{\emptyset} \geq \Lambda_0 + \DLambda_{-\beta,B}}$''}

Let $\lambda_\epsilon$ and $\mu_\epsilon$ be as in \eqref{eq:epsneg} and $u_0 = u_g^{\lambda_0,\mu_0}$. The set inclusion $B\subset D$ combined with Lemma~\ref{lemma:monoback} and \eqref{eq:kappa}--\eqref{eq:epsalph1} gives
\begin{align*}
	\inner{(\Lambda(\lambda_\epsilon,\mu_\epsilon) - \Lambda_0 - \DLambda_{-\beta,B})g,g} \hspace{-3cm}& \\
	&\geq \int_D (1-\epsilon)\lambda_0\abs{\nabla u_0}^2 + 2(1-\epsilon)\mu_0\frob{\widehat{\nabla}u_0}^2\,\di x -\beta \int_B \abs{\nabla\cdot u_0}^2 + 2\frob{\widehat{\nabla}u_0}^2\, \di x \\
	&\geq \int_B [(1-\epsilon)\lambda_0-\beta]\abs{\nabla u_0}^2 + 2[(1-\epsilon)\mu_0-\beta]\frob{\widehat{\nabla}u_0}^2\,\di x \\
	&\geq \frac{\alpha_0}{1+\alpha_0}\int_B (\lambda_0-\kappa)\abs{\nabla u_0}^2 + 2(\mu_0-\kappa)\frob{\widehat{\nabla}u_0}^2\,\di x \geq 0.
\end{align*}
Using Theorem~\ref{thm:convergence}, the operator norm convergence for $\epsilon\to 0$ yields $\Lambda_D^{\emptyset} \geq \Lambda_0 + \DLambda_{-\beta,B}$.

\subsection*{Proof of ``$\boldsymbol{\Lambda_D^{\emptyset} \geq \Lambda_{-\beta,B} \Rightarrow B\subset D}$''}

We set
\begin{equation}  \label{eq:Dneg}
	\lambda = \begin{cases}
		\lambda_0 & \text{in } \Omega\setminus D\\
		0 & \text{in } D
	\end{cases}
	\quad \text{and}
	\quad
	\mu = \begin{cases}
		\mu_0 & \text{in } \Omega\setminus D\\
		0 & \text{in } D.
	\end{cases}
\end{equation}
Let $\lambda_\epsilon$ and $\mu_\epsilon$ be as in \eqref{eq:epsneg} and $u_\epsilon = u_g^{\lambda_\epsilon,\mu_\epsilon}$. Then Lemma~\ref{lemma:monoback} gives
\begin{equation*}
	\inner{(\Lambda(\lambda_\epsilon,\mu_\epsilon)-\Lambda_{-\beta,B})g,g} \leq (1-\epsilon)\int_D\lambda_0\abs{\nabla\cdot u_\epsilon}^2+2\mu_0\frob{\widehat{\nabla}u_\epsilon}^2\,\di x - \beta\int_B\abs{\nabla\cdot u_\epsilon}^2+2\frob{\widehat{\nabla}u_\epsilon}^2\,\di x.
\end{equation*}
Letting $\epsilon\to 0$ and using Theorem~\ref{thm:convergence} yields
\begin{equation} \label{eq:extramononeg}
	\inner{(\Lambda_D^\emptyset-\Lambda_{-\beta,B})g,g} \leq \int_D\lambda_0\abs{\nabla\cdot \widehat{u}}^2+2\mu_0\frob{\widehat{\nabla}\widehat{u}}^2\,\di x - \beta\int_B\abs{\nabla\cdot \widehat{u}}^2+2\frob{\widehat{\nabla}\widehat{u}}^2\,\di x,
\end{equation}
where $\widehat{u} = Eu_g^{\lambda,\mu}$.

Assume $B\not\subset D$ so that $B\setminus D$ contains an open ball $\widehat{B}$, in which we may concentrate localized potentials via a suitable set $U$. To this end, let $U\subset\overline{\Omega}$ be a relatively open connected set that intersects $\GammaN$, compactly contains $\widehat{B}$, and satisfies $\overline{U}\cap D = \emptyset$. We now pick $(g_i)$ in $L^2_\diamond(\GammaN)^d$ via Lemma~\ref{lemma:locpot} and Lemma~\ref{lemma:locpotextreme} such that $\widehat{u}_i = Eu_{g_i}^{\lambda,\mu}$ satisfies
\begin{equation*}
	\lim_{i\to\infty}\int_B \abs{\nabla\cdot \widehat{u}_i}^2\,\di x = \lim_{i\to\infty}\int_B \frob{\widehat{\nabla}\widehat{u}_i}^2\,\di x = \infty
\end{equation*}
and
\begin{equation*}
	\lim_{i\to\infty}\int_D \abs{\nabla\cdot \widehat{u}_i}^2\,\di x = \lim_{i\to\infty}\int_D \frob{\widehat{\nabla}\widehat{u}_i}^2\,\di x = 0,
\end{equation*}
where we used the set inclusions $D\subset\Omega\setminus\overline{U}$ and $\widehat{B}\subseteq B$. Now \eqref{eq:extramononeg} gives
\begin{align*}
	\inner{(\Lambda_D^{\emptyset} - \Lambda_{-\beta,B})g_i,g_i} \hspace{-2cm}& \\
	&\leq \int_D \lambda_0\abs{\nabla\cdot \widehat{u}_i}^2 + 2\mu_0\frob{\widehat{\nabla}\widehat{u}_i}^2\,\di x -\beta \int_B \abs{\nabla\cdot \widehat{u}_i}^2 + 2\frob{\widehat{\nabla}\widehat{u}_i}^2\, \di x \to -\infty
\end{align*}
for $i\to\infty$. Thereby $B\not\subset D$ implies $\Lambda_D^{\emptyset} \not\geq \Lambda_{-\beta,B}$, which is the contrapositive formulation of the statement we are proving.

\subsection*{Proof of ``$\boldsymbol{\Lambda_D^{\emptyset} \geq \Lambda_0 + \DLambda_{-\beta,B} \Rightarrow B\subset D}$''}

Assume $B\not\subset D$ and let $\lambda$ and $\mu$ be as in \eqref{eq:Dneg}. We pick $(g_i)$ in $L^2_\diamond(\GammaN)^d$ as in the proof of ``$\Lambda_D^{\emptyset} \geq \Lambda_{-\beta,B} \Rightarrow B\subset D$'', such that $u_i = u_{g_i}^{\lambda_0,\mu_0}$ and $\widehat{u}_i = Eu_{g_i}^{\lambda,\mu}$ satisfy
\begin{equation*}
	\lim_{i\to\infty}\int_B \abs{\nabla\cdot u_i}^2\,\di x = \lim_{i\to\infty}\int_B \frob{\widehat{\nabla}u_i}^2\,\di x = \infty
\end{equation*}
and
\begin{equation*}
	\lim_{i\to\infty}\int_D \abs{\nabla\cdot \widehat{u}_i}^2\,\di x = \lim_{i\to\infty}\int_D \frob{\widehat{\nabla}\widehat{u}_i}^2\,\di x = 0.
\end{equation*}
Now Lemma~\ref{lemma:monoC0} gives
\begin{align*}
	\inner{(\Lambda_D^\emptyset - \Lambda_0 - \DLambda_{-\beta,B})g_i,g_i} \hspace{-2cm}& \\
	&\leq \int_D \lambda_0\abs{\nabla\cdot \widehat{u}_i}^2 + 2\mu_0\frob{\widehat{\nabla}\widehat{u}_i}^2\,\di x -\beta \int_B \abs{\nabla\cdot u_i}^2 + 2\frob{\widehat{\nabla}u_i}^2\, \di x \to -\infty
\end{align*}
for $i\to\infty$. Thereby $B\not\subset D$ implies $\Lambda_D^{\emptyset} \not\geq \Lambda_0 + \DLambda_{-\beta,B}$, which is the contrapositive formulation of the statement we are proving.

\section{Proof of Theorem~\ref{thm:monofinitelin}} \label{sec:monofinitelinptoof}

Recall from Definition~\ref{def:posneginc} that $D = \Dm\cup\Dp$ in case of \emph{non-extreme} inclusions, and $\lambda$ and $\mu$ are on the form
\begin{equation*} 
	\lambda = \begin{cases}
		\lambda_0 & \text{in } \Omega\setminus D \\
		\lambda_- & \text{in } \Dm \\
		\lambda_+ & \text{in } \Dp 
	\end{cases}
	\quad\text{and}\quad
	\mu = \begin{cases}
		\mu_0 & \text{in } \Omega\setminus D \\
		\mu_- & \text{in } \Dm \\
		\mu_+ & \text{in } \Dp,
	\end{cases}
\end{equation*}
where $\alpha_\lambda \leq \lambda_- \leq \lambda_0 \leq \lambda_+ \leq \beta_\lambda$ and $\alpha_\mu\leq \mu_-\leq\mu_0\leq\mu_+\leq \beta_\mu$, and satisfy Assumption~\ref{assump:technical}.

\subsection*{Proof of ``$\boldsymbol{D\subseteq C \Rightarrow \DLambda_C^- \geq \Lambda(\lambda,\mu)-\Lambda(\lambda_0,\mu_0) \geq \DLambda_C^+}$''}

Let $u_0 = u_g^{\lambda_0,\mu_0}$. Since $D\subseteq C$, $\lambda_- \leq \lambda_0$, and $\mu_- \leq \mu_0$, Lemma~\ref{lemma:monoback} gives
\begin{align*}
	\inner{(\Lambda(\lambda,\mu) - \Lambda(\lambda_0,\mu_0) - \DLambda_C^+)g,g} &\geq \int_{\Dp} (\lambda_0 - \lambda_+)\abs{\nabla\cdot u_0}^2 + 2(\mu_0-\mu_+)\frob{\widehat{\nabla}u_0}^2\,\di x \\
	&\hphantom{\geq{}} + \int_C (\beta_\lambda-\lambda_0)\abs{\nabla\cdot u_0}^2 +2(\beta_\mu-\mu_0)\frob{\widehat{\nabla}u_0}^2\,\di x \\
	&\geq \int_{C\setminus \Dp} (\beta_\lambda-\lambda_0)\abs{\nabla\cdot u_0}^2 +2(\beta_\mu-\mu_0)\frob{\widehat{\nabla}u_0}^2\,\di x \\
	&\geq 0.
\end{align*}
Next we consider the other inequality. Note that
\begin{equation*}
	\frac{\lambda_0}{\lambda_-}(\lambda_- - \lambda_0) \geq \lambda_0-\frac{\beta_\lambda^2}{\alpha_\lambda} \quad \text{and} \quad \frac{\mu_0}{\mu_-}(\mu_- - \mu_0) \geq \mu_0-\frac{\beta_\mu^2}{\alpha_\mu}.
\end{equation*}
Since $D\subseteq C$, $\lambda_+\geq\lambda_0$, and $\mu_+\geq\mu_0$, Lemma~\ref{lemma:monoback} gives
\begin{align*}
	\inner{(\Lambda(\lambda_0,\mu_0) - \Lambda(\lambda,\mu) + \DLambda_C^-)g,g} &\geq \int_{\Dm} \frac{\lambda_0}{\lambda_-}(\lambda_- - \lambda_0)\abs{\nabla\cdot u_0}^2 + 2\frac{\mu_0}{\mu_-}(\mu_- - \mu_0)\frob{\widehat{\nabla}u_0}^2\,\di x \\
	&\hphantom{\geq{}} + \int_C (\tfrac{\beta_\lambda^2}{\alpha_\lambda}-\lambda_0)\abs{\nabla\cdot u_0}^2 + 2(\tfrac{\beta_\mu^2}{\alpha_\mu}-\mu_0)\frob{\widehat{\nabla}u_0}^2\,\di x \\
	&\geq \int_{C\setminus \Dm} (\tfrac{\beta_\lambda^2}{\alpha_\lambda}-\lambda_0)\abs{\nabla\cdot u_0}^2 + 2(\tfrac{\beta_\mu^2}{\alpha_\mu}-\mu_0)\frob{\widehat{\nabla}u_0}^2\,\di x \\
	&\geq 0.
\end{align*}

\subsection*{Proof of ``$\boldsymbol{\DLambda_C^- \geq \Lambda(\lambda,\mu)-\Lambda(\lambda_0,\mu_0) \geq \DLambda_C^+ \Rightarrow D\subseteq C}$''}

Assume that $D\not\subseteq C$. Since $D,C\in\widehat{\mathcal{A}}$ and by Assumption~\ref{assump:technical}, there exists an open ball $B\subset D\setminus C$ and a relatively open connected set $U\subset \overline{\Omega}$, such that $U$ intersects $\GammaN$, $B\Subset U$, $\overline{U}\cap C = \emptyset$, and one of two options holds:
\begin{equation*}
	\textup{Case A: } \overline{U}\cap D \subseteq \Dp \quad \text{or} \quad
	\textup{Case B: } \overline{U}\cap D \subseteq \Dm.
\end{equation*}
We consider these cases separately. 

As we shall see below, the combination of the two cases proves that $D\not\subseteq C$ implies either $\DLambda_C^- \not\geq \Lambda(\lambda,\mu)-\Lambda(\lambda_0,\mu_0)$ or $\Lambda(\lambda,\mu)-\Lambda(\lambda_0,\mu_0) \not\geq \DLambda_C^+$, which is the contrapositive formulation of the statement we are proving. In particular, case A will contradict the second operator inequality, while case B will contradict the first operator inequality. 

For both cases we pick $(g_i)$ in $L^2_\diamond(\GammaN)^d$ via Lemma~\ref{lemma:locpot} such that $u_i = u_{g_i}^{\lambda_0,\mu_0}$ satisfies
\begin{equation*}
	\lim_{i\to\infty}\int_B \abs{\nabla\cdot u_i}^2\,\di x = \lim_{i\to\infty}\int_B \frob{\widehat{\nabla}u_i}^2\,\di x = \infty
\end{equation*}
and
\begin{equation*}
	\lim_{i\to\infty}\int_{\Omega\setminus U} \abs{\nabla\cdot u_i}^2\,\di x = \lim_{i\to\infty}\int_{\Omega\setminus U} \frob{\widehat{\nabla}u_i}^2\,\di x = 0.
\end{equation*}

\subsection*{Case A}

By Lemma~\ref{lemma:monoback} we get
\begin{align*}
	\inner{(\Lambda(\lambda,\mu) - \Lambda(\lambda_0,\mu_0) - \DLambda_C^+)g_i,g_i} &\leq \int_{\Dm\cup\Dp} \frac{\lambda_0}{\lambda}(\lambda_0-\lambda)\abs{\nabla\cdot u_i}^2 +  2\frac{\mu_0}{\mu}(\mu_0-\mu)\frob{\widehat{\nabla}u_i}^2\,\di x \\
	&\hphantom{\leq{}} + \int_C (\beta_\lambda-\lambda_0)\abs{\nabla\cdot u_i}^2 +2(\beta_\mu-\mu_0)\frob{\widehat{\nabla}u_i}^2\,\di x. 
\end{align*}
The integrals over $C$ and over $\Dm$ tend to zero as $C\cup\Dm\subset\Omega\setminus\overline{U}$. By Assumption~\ref{assump:technical} we may assume that $\inf_B(\mu_+-\mu_0)>0$. As $B\subset \Dp$, $\lambda_+\geq\lambda_0$, and $\mu_+\geq \mu_0$, the integral over $\Dp$ tends to minus infinity for $i\to\infty$. Thus we conclude that
\begin{equation*}
	\lim_{i\to\infty}\inner{(\Lambda(\lambda,\mu) - \Lambda(\lambda_0,\mu_0) - \DLambda_C^+)g_i,g_i} = -\infty,
\end{equation*}
which means that $\Lambda(\lambda,\mu)-\Lambda(\lambda_0,\mu_0) \not\geq \DLambda_C^+$.

\subsection*{Case B}

By Lemma~\ref{lemma:monoback} we get
\begin{align*}
	\inner{(\Lambda(\lambda_0,\mu_0) - \Lambda(\lambda,\mu) + \DLambda_C^-)g_i,g_i} &\leq \int_{\Dm\cup\Dp} (\lambda-\lambda_0)\abs{\nabla\cdot u_i}^2 + 2(\mu - \mu_0)\frob{\widehat{\nabla}u_i}^2\,\di x \\
	&\hphantom{\leq{}} + \int_C (\tfrac{\beta_\lambda^2}{\alpha_\lambda}-\lambda_0)\abs{\nabla\cdot u_i}^2 + 2(\tfrac{\beta_\mu^2}{\alpha_\mu}-\mu_0)\frob{\widehat{\nabla}u_i}^2\,\di x.
\end{align*}
The integrals over $C$ and over $\Dp$ tend to zero as $C\cup\Dp\subset\Omega\setminus\overline{U}$. By Assumption~\ref{assump:technical} we may assume that $\sup_B(\mu_- - \mu_0)<0$. As $B\subset \Dm$, $\lambda_-\leq\lambda_0$, and $\mu_-\leq \mu_0$, the integral over $\Dm$ tends to minus infinity for $i\to\infty$. Thus we conclude that
\begin{equation*}
	\lim_{i\to\infty}\inner{(\Lambda(\lambda_0,\mu_0) - \Lambda(\lambda,\mu) + \DLambda_C^-)g_i,g_i} = -\infty,
\end{equation*}
which means that $\DLambda_C^- \not\geq \Lambda(\lambda,\mu)-\Lambda(\lambda_0,\mu_0)$.

\subsection*{Acknowledgements}

We thank Valentina Candiani for her participation in early discussions and her contributions to an initial draft of the paper.

HG is supported by grant 10.46540/3120-00003B from Independent Research Fund Denmark. NH is supported by the Research Council of Finland (decisions 353081 and 359181). While at Goethe University Frankfurt, SEB was supported by grant 499303971 from the German Research Foundation.

\bibliographystyle{plain}

\end{document}